\newcommand{\addresseshere}{%
  \enddoc@text\let\enddoc@text\relax
}
\theoremstyle{plain}
\newtheorem{theorem}{Theorem}[section]
\newtheorem{prop}[theorem]{Proposition}
\newtheorem{lemma}[theorem]{Lemma}
\newtheorem{corollary}[theorem]{Corollary}
\newtheorem{problem}[theorem]{Problem}
\theoremstyle{definition}
\newtheorem{remark}[theorem]{Remark}
\newtheorem{definition}[theorem]{Definition}
\newcommand{\vid}{\mathrm{vid}}
\newcommand{\haj}{\Delta_H}
\newcommand{\ore}{\mathrm{Ore}}
\newcommand{\urq}{\mathrm{Urq}}
\newcommand{\N}{\mathcal{N}}
\newcommand{\dd}{\mathrm{d}}
\newcommand{\link}{\mathrm{link}}
\newcommand{\cstar}{\mathrm{star}}
\newcommand{\da}{\mathrm{NP}}
\newcommand{\mbf}[1]{\mathbf{#1}}
\newcommand{\todo}[1]{\par \noindent
  \framebox{\begin{minipage}[c]{0.95 \textwidth} TO DO:
      #1 \end{minipage}}\par}
\newcommand{\bnote}[1]{\par \noindent
  \framebox{\begin{minipage}[c]{0.95 \textwidth}\color{blue} BEN'S NOTE:
      #1 \color{black}\end{minipage}}\par}
\definecolor{green}{RGB}{34, 139, 34}
\newcommand{\jnote}[1]{\par \noindent
  \framebox{\begin{minipage}[c]{0.95 \textwidth}\color{green} JULIE'S NOTE:
      #1 \color{black}\end{minipage}}\par}
\newcommand\commentout[1]{}
\begin{document}

% title header etc
\title{Haj\'os-Type Constructions and Neighborhood Complexes}

\author{Benjamin Braun}
\address{Department of Mathematics\\
         University of Kentucky\\
         Lexington, KY 40506--0027}
\email{benjamin.braun@uky.edu}

\author{Julianne Vega}
\address{Department of Mathematics\\
         University of Kentucky\\
         Lexington, KY 40506--0027}
\email{julianne.vega@uky.edu}

\date{18 December 2018}

%\thanks{
%}

\begin{abstract}
  Any graph $G$ with chromatic number $k$ can be constructed by iteratively performing certain graph operations on a sequence of graphs starting with $K_k$, resulting in a variety of Haj\'os-type constructions for $G$.
  Finding such constructions for a given graph or family of graphs is a challenging task.
  We show that the basic steps in these Haj\'os-type constructions frequently result in the presence of an $S^1$-wedge summand in the neighborhood complex of the resulting graph.
  Our results imply that for a graph $G$ with a highly-connected neighborhood complex, the end behavior of the construction sequence is quite restricted, and we investigate these restrictions in detail.
  We also introduce two graph construction algorithms based on different Haj\'os-type constructions and conduct computational experiments using these.
\end{abstract}

\maketitle

%%%%%%%%%%%%%%%%%%%%%%%%%%%%%%%%%%%%%%%%%%%%%%%%%%%%%%%%%
\section{Introduction}\label{sec:intro}

\subsection{Motivation}
Proper graph colorings are of great interest in combinatorics and the study of chromatic numbers for graphs is a frequent focus.
While coarse bounds on the chromatic number can be found easily, determining the chromatic number is NP-complete.
In 1978, Lov\'asz advanced the study of chromatic numbers through the construction of the neighborhood complex $\mathcal{N}(G)$ of a graph $G$.
Intuitively, $\mathcal{N}(G)$ is capturing the relations of the vertices with their neighbors and the topological connectivity of $\N(G)$ is measuring the complexity of continuous deformations of the neighborhoods in the graph.
Lov\'asz~\cite{LovaszChromaticNumberHomotopy} proved that the topological connectivity of $\N(G)$ gives a general lower bound for the chromatic number of $G$, and then showed that this provides a sharp lower bound for Kneser graphs.
Since this original result, there has been steady development regarding our understanding of neighborhood complexes of graphs and various topological lower bounds for chromatic numbers~\cite{KozlovBook}.

Another area of interest related to graph colorings is the characterization of $k$-chromatic graphs, i.e. graphs with chromatic number $k$.
In 1961, Haj\'os  \cite{hajos} characterized graphs with chromatic number at least $k$ through the concept of a $k$-constructible subgraph, where constructibility is defined recursively using the operations of vertex identification and Haj\'os merging.
Strengthening this result, Urquhart ~\cite{urquhart} showed that every $k$-chromatic graph is $t$-constructible for $t \leq k$.
In the proof, Urquhart describes a Haj\'os-type construction which involves an ``Ore merge,'' an operation that involves the Haj\'os merge followed by a restricted series of vertex identifications.

Jensen and Royle ~\cite{jensenroyle} considered Haj\'os' result in terms of $k$-critical graphs.
They proved that there exists $k$-critical graphs that are not Haj\'os constructible through a sequence of $k$-critical graphs.
The proof of this result involved exploring the operations necessary for the final steps of the Haj\'os construction sequence, showing that the key to constructing $k$-critical graphs is to end in a vertex identification.

Building on these narratives, we are led to ask in this article how the connectivity of the neighborhood complex of a given graph is affected by the Haj\'os merge and vertex identification operations. In addition, we ask what end behavior of Haj\'os construction sequences is necessary to obtain graphs with highly (topologically) connected neighborhood complexes.

\subsection{
Our Contributions
}
We explore the topological effects of Haj\'os-type constructions from both theoretical and experimental perspectives.
In Section~\ref{sec:background}, we discuss background for Haj\'os-type constructions, neighborhood complexes of graphs, and discrete Morse theory.
We explore the topological effects of Haj\'os merges and vertex identifications in Section~\ref{sec:operations}, providing insight into restrictions on Haj\'os-type constructions for graphs with highly connected neighborhood complexes.
Our main result is Corollary~\ref{cor:vid5top}, which states that such constructions must end with an identification of vertices at distance four or less from each other in the graph.
Motivated by this, in Section~\ref{sec:shortdistance} we investigate the impact of ``short-distance'' identifications on the first Betti number of $\N(G)$.
In Section~\ref{sec:dhgo}, we briefly investigate the topological effects of DHGO compositions, a generalization of Haj\'os merges.
Finally, in Section~\ref{sec:experiments}, we introduce two graph construction algorithms based on different Haj\'os-type constructions, discuss the outcomes of computational experiments using these, and conclude with open problems.

%%%%%%%%%%%%%%%%%%%%%%%%%%%%%%%%%%%%%%%%%%%%%%%%%%%%%%%%%
\section{Background}\label{sec:background}

%%%%%%%%%%%%%%%%%%%%%%%%%%%%%%%%%%%%%%%%%%%%%%%%%%%%%%%%%%%%
\subsection{Haj\'os-type Constructions}

In this section we review Haj\'os-type constructions.
Recall that a graph is \emph{$k$-chromatic} if it has chromatic number $k$.

\begin{definition}\label{def:hajosconst}
A graph is called \emph{Haj\'os $k$-constructible} if it is a complete graph $K_k$ or if it can be constructed from $K_k$ by successive applications of the following two operations:
\begin{itemize}
\item (Haj\'os Merge) If $G_1$ and $G_2$ are already-obtained disjoint graphs, then to the disjoint union $G_1 \uplus G_2$ remove an edge $(x_1,y_1)$ from $G_1$ and an edge $(x_2,y_2)$ from $G_2$, identify $x_1$ with $x_2$, and add the edge $(y_1,y_2)$. 
We abuse notation and denote the resulting graph $G_1\haj G_2$.
\item (Vertex Identification) Identify two nonadjacent vertices in an already-obtained graph $H$, where we ignore the presence of multiple edges.
If $L$ is a list of pairs of nonadjacent vertices in $G$, then $\vid(G,L)$ is the graph obtained by identifying all those pairs of vertices.
\end{itemize}
\end{definition}

Any construction of a graph using this process will be called a \emph{Haj\'os construction}.
An example of a Haj\'os merge is given in Figure~\ref{fig:hajosk3}. 
It is straightforward to show that $\chi(G_1\haj G_2)\geq \min(\chi(G_1),\chi(G_2))$, and that $\chi(\vid(G,L))\geq \chi(G)$.
Thus, if $G$ is Haj\'os $k$-constructible, then $\chi(G)\geq k$.
Haj\'os proved further that if $\chi(G)\geq k$, then $G$ contains a Haj\'os $k$-constructible subgraph.

\begin{figure}
\begin{center}
\begin{tikzpicture}[scale = 0.5, every node/.style={}]

\draw[line width=1pt] (0,0) -- (0,2);
\draw[red, line width = 1.5pt] (0,2) -- (2,0);
\draw[line width=1pt] (2,0) -- (0,0);

\draw[line width=1pt] (5,0) -- (7,0);
\draw[line width=1pt] (7,0) -- (7,2);
\draw[red, line width=1.5pt] (7,2) -- (5,0);

\fill[] (0,0) circle (6pt); 
\fill[] (0,2) circle (6pt); 
\fill[] (2,0) circle (6pt); 

\fill[] (5,0) circle (6pt); 
\fill[] (7,0) circle (6pt); 
\fill[] (7,2) circle (6pt); 

\node (a)  at (0, -0.75) {$z_1$};
\node (b) at (2, -0.75) {$x_1$};
\node (c) at (0, 2.5) {$y_1$};

\node (d) at (5, -0.75) {$x_2$};
\node (e) at (7, -0.75) {$z_2$};
\node (f) at (7, 2.5) {$y_2$};

\draw[->] (7.5, 1) to (9, 1); 

\draw[line width=1pt] (12,0) -- (13,2);
\draw[blue, line width = 1.5pt] (13,2) -- (15,2);
\draw[line width=1pt] (12,0) -- (14,0);

\draw[line width=1pt] (14,0) -- (16,0);
\draw[line width=1pt] (16,0) -- (15,2);

\fill[] (14,0) circle (6pt); 
\fill[] (12,0) circle (6pt); 
\fill[] (16,0) circle (6pt); 

\fill[] (15,2) circle (6pt); 
\fill[] (13,2) circle (6pt);

\node (g) at (12, -0.75) {$z_1$};
\node (h) at (14, -0.75) {$x_1x_2$};
\node (i) at (16, -0.75) {$z_2$};

\node (j) at (13, 2.5) {$y_1$};
\node (k) at (15, 2.5) {$y_2$};

\end{tikzpicture}
\end{center}
\caption{An example of a Haj\'os merge of $K_3$ with $K_3$.}
\label{fig:hajosk3}
\end{figure}
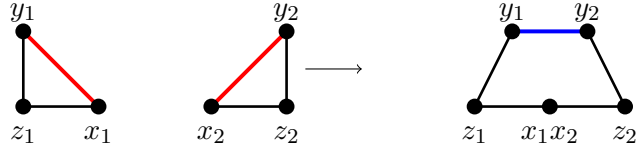

\begin{theorem}[H\'ajos~\cite{hajos}]
For every $3\leq t\leq k$, every $k$-chromatic graph contains a $t$-constructible subgraph.
\end{theorem}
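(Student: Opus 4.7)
The plan is to argue by minimum counterexample. Suppose the theorem fails, and let $G$ be a counterexample minimizing $|V(G)|$ and, subject to that, maximizing $|E(G)|$; so $\chi(G) \geq t$ while $G$ contains no Haj\'os $t$-constructible subgraph. My first step is to show that $G$ is $t$-critical with $G \neq K_t$. For any vertex $v$, if $\chi(G-v) \geq t$ then by minimality $G-v$ contains a Haj\'os $t$-constructible subgraph, which also sits in $G$ --- a contradiction; an analogous argument rules out $\chi(G-e) \geq t$ for every edge $e$. Since $K_t$ is itself Haj\'os $t$-constructible, $G \neq K_t$, and therefore $G$ must contain two nonadjacent vertices $x_1, x_2$.

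Next, I would apply minimality of $G$ in two complementary ways. The graph $G + x_1 x_2$ still satisfies $\chi \geq t$ but has one more edge, so by edge-maximality it is not a counterexample and contains a Haj\'os $t$-constructible subgraph $H_1$; if $x_1 x_2 \notin E(H_1)$ then $H_1 \subseteq G$, so we may assume $x_1 x_2 \in E(H_1)$. Dually, the identified graph $G' := \vid(G, \{(x_1,x_2)\})$ has $\chi(G') \geq \chi(G) \geq t$ (any proper $(t-1)$-coloring of $G'$ pulls back to one of $G$ by assigning $x_1$ and $x_2$ the color of the merged vertex $w$) and strictly fewer vertices, so by minimality it contains a Haj\'os $t$-constructible subgraph $H_2$ which we may assume contains $w$.

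The crux of the proof is to combine $H_1$ and $H_2$ into a Haj\'os $t$-constructible subgraph of $G$ via a Haj\'os merge. Taking a disjoint copy $H_2'$ of $H_2$ with $w'$ the image of $w$ and a neighbor $y'$ of $w'$ in $H_2'$, the merge $H_1 \haj H_2'$ performed along the edges $(x_1,x_2) \in E(H_1)$ and $(w',y') \in E(H_2')$ --- identifying $x_1$ with $w'$ and adding the edge $(x_2,y')$ --- yields a Haj\'os $t$-constructible graph. To realize this graph as a subgraph of $G$, I must choose $y \in N_{H_2}(w)$ so that $(x_2,y) \in E(G)$ while $(x_1,z) \in E(G)$ for every other neighbor $z$ of $w$ in $H_2$. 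I expect this selection step to be the main obstacle, since a priori $H_2$ may have several neighbors of $w$ that are adjacent in $G$ only to $x_2$. To overcome it I would either choose $H_2$ subject to additional minimality (for instance, minimizing the number of ``$x_2$-only'' neighbors of $w$) and run an exchange argument, or iterate the Haj\'os merge with multiple disjoint copies of $H_1$ so as to systematically peel off the problematic neighbors one at a time.
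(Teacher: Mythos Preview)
The paper does not give its own proof of this theorem; it merely states the result and cites Haj\'os. So the relevant comparison is with the classical Haj\'os argument.

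Your approach diverges from that argument and, in doing so, creates the very obstacle you flag at the end. Haj\'os' original proof uses \emph{only} edge-maximality: an edge-maximal $G$ with $\chi(G)\ge t$ and no $t$-constructible subgraph cannot be a complete multipartite graph (otherwise it contains $K_t$), so non-adjacency is not transitive and one finds vertices $x,y_1,y_2$ with $xy_1,xy_2\notin E(G)$ but $y_1y_2\in E(G)$. Edge-maximality applied to $G+xy_i$ yields $t$-constructible $H_i$ with $xy_i\in E(H_i)$ for $i=1,2$. A single Haj\'os merge of disjoint copies of $H_1$ and $H_2$ at the two copies of $x$, deleting $xy_1$ and $xy_2$ and adding $y_1y_2$, followed by identifying duplicated vertices, lands directly in $G$: the only new edge is $y_1y_2$, which was chosen to lie in $E(G)$. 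No selection problem ever arises.

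Your route via $G'=\vid(G,\{(x_1,x_2)\})$ is where the trouble enters. When you lift $H_2\subseteq G'$ back to $G$, the neighbours of the merged vertex $w$ can split arbitrarily between $x_1$-only and $x_2$-only, and a single Haj\'os merge with $H_1$ repairs at most one such neighbour. Your two proposed fixes are not worked out: the ``additional minimality on $H_2$'' suggestion has no evident exchange move that reduces the number of $x_2$-only neighbours, and the ``iterate with multiple copies of $H_1$'' idea, while plausible, requires verifying that every subsequent vertex identification is between non-adjacent vertices after earlier collapses have introduced new adjacencies --- this is not automatic. As written, the proof has a genuine gap at exactly the step you identify; the cleanest repair is to abandon the vertex-identification branch and instead use the three-vertex configuration from the complete-multipartite observation, as Haj\'os did.
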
\label{thm:hajos}

Urquhart later strengthened Haj\'os' result.

\begin{theorem}[Urquhart~\cite{urquhart}]\label{thm:urquhart}
For every $3\leq t\leq k$, every $k$-chromatic graph is $t$-constructible.
\end{theorem}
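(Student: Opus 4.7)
The plan is to combine Hajós' theorem (Theorem 2.2) with the Ore merge operation flagged in the introduction: a Hajós merge with a fresh copy of $K_t$, followed by a controlled batch of vertex identifications. Hajós' theorem already supplies a $t$-constructible \emph{subgraph} $H \subseteq G$, so the remaining task is to show that any supergraph of a $t$-constructible graph (on a sufficiently rich vertex set) is again $t$-constructible. I would reduce the full theorem to the following key lemma, which is essentially a single Ore merge.

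\emph{Key lemma (edge addition).} Suppose $F$ is $t$-constructible with $V(F) \supseteq V(G)$, and suppose $u,v \in V(G)$ satisfy $uv \in E(G)$ but $uv \notin E(F)$, and $u$ has at least one neighbor $w$ in $F$. Then the graph obtained from $F$ by adding the edge $uv$ is $t$-constructible. To prove this, I would take a disjoint $K_t$ on vertices $a_1,\ldots,a_t$ and perform the Hajós merge $F \,\Delta_H\, K_t$ that deletes the edge $(u,w)$ from $F$, deletes $(a_1,a_2)$ from $K_t$, identifies $w$ with $a_1$, and adds the edge $(u,a_2)$. The vertex $a_2$ is nonadjacent to $v$ in the resulting graph (since $uv \notin E(F)$ and $v \neq w$), so identifying $a_2$ with $v$ via $\vid$ yields the desired edge $uv$. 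The remaining auxiliary vertices $a_3,\ldots,a_t$ are then identified, one at a time, with suitably chosen vertices of $V(F)$ to which they are nonadjacent, and the edge $(u,w)$ that was sacrificed in the merge is reinstated by a second Ore merge (swapping the roles of $(u,w)$ and a previously targeted edge), or equivalently by processing the edges of $E(G) \setminus E(F)$ in an order that leaves each temporarily deleted edge to be rebuilt on a later pass.

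With this lemma in hand, the global proof proceeds in three phases. First, apply Theorem 2.2 to obtain a $t$-constructible subgraph $H$ of $G$. Second, enlarge $H$ to a $t$-constructible graph $F$ with $V(F) \supseteq V(G)$ by a sequence of Hajós merges against disjoint copies of $K_t$, using vertex identifications to absorb the spurious new vertices down to a designated copy of each vertex of $V(G)$; this also guarantees that $F$ has enough ``spare'' nonadjacent pairs to accommodate the cleanup identifications in the key lemma. Third, iterate the key lemma once for each edge of $E(G) \setminus E(F)$, and finish with any remaining identifications that collapse auxiliary vertices onto the vertices of $G$. The end result is exactly $G$, witnessing that $G$ is $t$-constructible.

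The main obstacle is bookkeeping in the key lemma: the Ore merge unavoidably deletes an edge $(u,w)$ and introduces $t-2$ auxiliary vertices that must be killed by vertex identifications with nonadjacent partners in $F$. Guaranteeing that such partners exist (i.e.\ that $F$ is wide enough and not too dense near $u,v$) and that every sacrificed edge is ultimately restored is the delicate combinatorial heart of the argument; this is precisely what is engineered in the enlargement phase and in the chosen ordering of edge additions. The chromatic inequalities $\chi(G_1 \Delta_H G_2) \geq \min(\chi(G_1),\chi(G_2))$ and $\chi(\vid(G,L)) \geq \chi(G)$ noted after Definition 2.1 ensure that every intermediate graph in the construction has chromatic number at least $t$, so the resulting Hajós construction is a valid certificate that $G$ is $t$-constructible.
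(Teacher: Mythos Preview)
Note first that the paper does not prove Theorem~\ref{thm:urquhart}; it is quoted from Urquhart. The paper only records the surrounding machinery (Definitions~\ref{def:oreconst} and~\ref{def:urquhartconst}): Urquhart's argument runs through Ore merges, and in particular shows that every $k$-chromatic graph arises as an iterated Ore merge of graphs each containing a copy of $K_k$. So there is no detailed proof in the paper against which to compare line by line, only this outline.

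Your plan has the right spirit, but the key lemma is not correct as stated, and the defect is structural rather than bookkeeping. After the Haj\'os merge of $F$ with a fresh $K_t$ on $\{a_1,\dots,a_t\}$ and the identification $a_2=v$, the remaining vertices $a_3,\dots,a_t$ form a $(t-2)$-clique, each adjacent to both $w(=a_1)$ and $v$. Absorbing them into vertices $z_3,\dots,z_t$ --- whether those are in $V(G)$ now, or auxiliary vertices later collapsed onto $V(G)$ --- forces all of the edges $z_iz_j$, $z_iw$, $z_iv$ into the final graph. Even the clever choice $a_3=u$, which does restore the sacrificed edge $uw$, leaves $a_4,\dots,a_t$ adjacent to each of $u,v,w$. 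Since vertex identification never deletes an edge and a Haj\'os merge deletes only the one edge it sacrifices, these forced adjacencies must lie in $E(G)$. For $t\ge 4$ this means $G$ must contain a triangle. Take $G$ to be any triangle-free graph with $\chi(G)\ge 4$ that is not $4$-critical --- say the Gr\"otzsch graph with a pendant vertex attached --- so that Haj\'os' theorem hands you only a proper subgraph $H$ and your edge-addition phase is genuinely needed. Then no absorption of $a_4$ is compatible with the final answer being $G$, and your proposed ``second Ore merge'' only attaches another $K_t$ carrying $\binom{t}{3}-(t-2)\ge 2$ fresh triangles while killing at most the triangles through one edge, so the spurious-triangle count does not go to zero.

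The obstruction is that $K_t$ is too rigid a merge partner: its internal clique is pushed wholesale into the target. Urquhart's route, as the paper indicates via Definition~\ref{def:urquhartconst}, avoids this by Ore-merging with graphs that already carry the adjacency structure of $G$ (the $G_i\supseteq K_k$ rather than bare copies of $K_t$), exploiting the bijection $\mu$ so that every identification produces only edges already in $E(G)$. That inductive setup, not edge-by-edge addition against $K_t$, is the missing ingredient in your sketch.
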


This theorem has been the subject of continued investigation in recent years~\cite{iwama,jensengrassman,jensenroyle,johnsonhajos,kralhajos,liuzhang,urquhart}.
Urquhart proved Theorem~\ref{thm:urquhart} using a variant of H\'ajos' construction that was introduced by Ore~\cite{orefourcolor} as follows.

\begin{definition}\label{def:oreconst}
A graph is \emph{Ore $k$-constructible} if it is a complete graph $K_k$ or if it can be constructed from $K_k$ by successive applications of the following operation:
\begin{itemize}
\item (Ore Merge) Suppose $G_1$ and $G_2$ are already-obtained disjoint graphs with respective edges $(x_1,y_1)$ and $(x_2,y_2)$. 
Let $\mu$ be a bijection from a subset of $V(G_1)$ to $V(G_2)$ so that $x_1\notin \mathrm{domain}(\mu)$ and $x_2\notin \mathrm{range}(\mu)$, and $\mu(y_1)\neq y_2$.
Form the Haj\'os merge on $G_1$ and $G_2$ using the two edges, then identify the vertex pairs $[x,\mu(x)]$.
We abuse notation and denote the resulting graph $\ore(G_1,G_2)$.
\end{itemize}
Any construction of a graph using this process will be called an \emph{Ore construction}.
\end{definition}

Note that an Ore merge arises from a single Haj\'os merge followed by a sequence of restricted vertex identifications.
However, Urquhart~\cite{urquhart} proved that the families of Haj\'os constructible and Ore constructible graphs are equivalent.

\begin{theorem}[Urquhart~\cite{urquhart}]\label{thm:hajosore}
For a graph $G$ and $k \geq 2$, the following conditions are equivalent: 
\begin{itemize}
\item $G$ is Haj\'os-$k$-constructible; 
\item $G$ is Ore-$k$-constructible. 
\end{itemize}
\end{theorem}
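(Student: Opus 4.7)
The plan is to prove the two directions separately. The implication Ore-$k$-constructible $\Rightarrow$ Haj\'os-$k$-constructible is the easy one: I would read it directly off Definition~\ref{def:oreconst}, observing that an Ore merge $\ore(G_1,G_2)$ is by construction a single Haj\'os merge $G_1\haj G_2$ followed by a sequence of vertex identifications of the pairs $\{x,\mu(x)\}$ for $x\in\mathrm{dom}(\mu)$. Each such pair is nonadjacent at the moment of identification because $x\in V(G_1)$ and $\mu(x)\in V(G_2)$ lie in the originally disjoint graphs; the only new edge introduced by the Haj\'os step is $(y_1,y_2)$, and the side conditions $x_2\notin\mathrm{range}(\mu)$ and $\mu(y_1)\neq y_2$ guarantee this edge never connects one of the pairs. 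Thus each Ore merge decomposes into legal Haj\'os operations, and induction on the length of an Ore construction gives the implication.

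For the reverse implication I would induct on the length of a Haj\'os construction of $G$, with the base case $G=K_k$ trivial. If the last operation is a Haj\'os merge $G=G_1\haj G_2$ with $G_1,G_2$ Ore-$k$-constructible by induction, then $G$ is the Ore merge $\ore(G_1,G_2)$ in which the bijection $\mu$ has empty domain (the side condition $\mu(y_1)\neq y_2$ being vacuous), so $G$ is Ore-$k$-constructible. This reduces everything to the case in which the final operation is a vertex identification $G=\vid(H,L)$ with $H$ Ore-$k$-constructible by induction, and here is where the real work lies.

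The main obstacle is structural: a single Ore merge $\ore(G_1,G_2)$ produces a graph on $|V(G_1)|+|V(G_2)|-1-|\mathrm{dom}(\mu)|$ vertices, which is at least $\max(|V(G_1)|,|V(G_2)|)$ since $|\mathrm{dom}(\mu)|\leq\min(|V(G_1)|,|V(G_2)|)-1$. In particular an Ore merge can never strictly decrease the vertex count relative to $G_1$, so there is no way to take $G_1=H$ and obtain $G$ (which has one fewer vertex) directly. The identification step must therefore be realized by an Ore construction whose intermediate graphs differ from $H$, reorganizing the construction rather than mimicking it step-by-step.

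The plan for this case is to decompose $\vid(H,L)$ into single-pair identifications and to realize each one by a short Ore construction using an auxiliary Ore-$k$-constructible gadget. Given nonadjacent $u,v\in H$, the idea is to build a suitable auxiliary graph $G_2$ (for instance, starting from a copy of $K_k$ with two designated vertices playing the roles of $u$ and $v$) together with a bijection $\mu$ so that, after the Haj\'os step and the $\mu$-identifications, the images of $u$ and $v$ are forced to coincide while all residual vertices of the gadget are absorbed back into $H$; if residual vertices remain after a single Ore merge, one iterates with further Ore merges to clean them up. Verifying that the resulting graph is exactly $G$ and introduces no spurious edges is the technical heart of the argument and is the content of Urquhart's proof in \cite{urquhart}, on which I would model the write-up.
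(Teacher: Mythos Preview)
The paper does not prove Theorem~\ref{thm:hajosore} itself; it is quoted as a result of Urquhart with a citation to~\cite{urquhart}, so there is no in-paper argument to compare against beyond the one-line remark preceding the theorem that ``an Ore merge arises from a single Haj\'os merge followed by a sequence of restricted vertex identifications.'' That is exactly your easy direction, and you handle it correctly (including the check that successive pairs $[x,\mu(x)]$ remain nonadjacent at the moment of identification).

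For the hard direction you do not give a proof but a plan, and you explicitly defer to~\cite{urquhart} for the details. Two points are worth flagging. First, the mechanism you sketch---simulating a single identification $\vid(H,[u,v])$ by Ore-merging $H$ with a gadget $G_2$ and then ``absorbing'' the gadget---is not Urquhart's method. As the paper notes immediately after the theorem, Urquhart instead proves the stronger statement that every graph with $\chi(G)\geq k$ is Ore-$k$-constructible (indeed via the process of Definition~\ref{def:urquhartconst}); the implication Haj\'os $\Rightarrow$ Ore then drops out because Haj\'os-$k$-constructibility forces $\chi\geq k$. Second, your gadget sketch does not actually overcome the vertex-count obstacle you yourself raise: in any Ore merge the identifications pair a vertex of $G_1$ with a vertex of $G_2$ via an injection $\mu$, so two vertices $u,v$ both lying in $H=G_1$ can never be forced to coincide in a single step, and ``iterating with further Ore merges to clean up residual vertices'' faces the same obstruction at every stage. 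If you want to supply a genuine argument rather than a pointer, the chromatic-number detour is the one to write out.
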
 

In the proof of Theorem~\ref{thm:hajosore}, Urquhart proved that for $k\geq 3$ every $k$-chromatic graph $G$ can be obtained by the following process.

\begin{definition}\label{def:urquhartconst}
Suppose that a graph $G$ is obtained by applying Ore merges to a sequence of graphs $G_1,G_2,\ldots,G_\ell$ each containing $K_k$.
We will call a construction of $G$ using this approach an \emph{Urquhart construction}, and denote (again abusing notation) $G=\urq(G_1,\ldots,G_\ell)$.
\end{definition}

A $k$-chromatic graph is $k$-critical if every proper subgraph is $j$-chromatic for some $j <k$. Theorem~ \ref{thm:hajos} implies that $k$-critical graphs are $k$-constructible. Jensen and Royle~\cite{jensenroyle}  proved the existence of $k$-critical graphs that do not have a Haj\'os sequence consisting of exclusively $k$-critical graphs.
\begin{theorem}
For every $k \geq 4$ there exists a $k$-critical graph that allows no Haj\'os $k$-construction where all intermediary graphs are $k$-critical. 
\end{theorem}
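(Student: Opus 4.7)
The plan is to analyze the final operation in a hypothetical Haj\'os $k$-construction of a $k$-critical graph $G$ that uses only $k$-critical intermediaries, derive tight structural constraints from each case, and then exhibit a $k$-critical graph that satisfies none of these constraints.

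First, I would split into two cases based on the last operation. \emph{Case A: Final operation is a Haj\'os merge} $G = G_1 \haj G_2$ with both $G_1, G_2$ being $k$-critical. Then $G$ must contain a vertex $x$ and an edge $e = (y_1, y_2)$ with $y_1, y_2 \notin N_G(x)$, such that $G - x - e$ splits into exactly two components with $y_i$ in the $i$th component; moreover, adding $(x, y_i)$ back to the $i$th component must recover a $k$-critical graph $G_i$. Furthermore, criticality of $G$ imposes stringent compatibility conditions on proper $(k-1)$-colorings of $G_1 - e'$ and $G_2 - (x_2, y_2)$ at the vertex $x$ for every edge $e'$ of $G_1$ (and symmetrically). \emph{Case B: Final operation is a vertex identification} $G = \vid(H, L)$ with $H$ being $k$-critical. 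Then $G$ must contain some vertex $v$ whose neighborhood admits a non-trivial bipartition $N_G(v) = A \sqcup B$ (with $A, B$ both non-empty) such that splitting $v$ into two non-adjacent vertices $v_A, v_B$ with neighborhoods $A$ and $B$ yields a $k$-critical graph $H$.

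Second, I would exhibit a $k$-critical graph $G_k$ satisfying none of the above structural requirements. For the base case $k = 4$, this requires identifying a $4$-critical graph ruling out both cases; Jensen and Royle obtained such examples via a targeted computational search over small $4$-critical graphs. To extend to $k > 4$, I would take the graph join $G_k := G_4 \vee K_{k-4}$: it is standard that the join of a $j$-critical graph with $K_\ell$ is $(j+\ell)$-critical, so $G_k$ is $k$-critical. One then shows that the obstructions to cases A and B are preserved under the join: every vertex from the $K_{k-4}$ side is adjacent to all other vertices and hence admits no nontrivial neighborhood split, while any Haj\'os decomposition of $G_k$ would restrict to a corresponding decomposition of $G_4$, contradicting the choice of $G_4$.

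The main obstacle is the construction of the base example $G_4$ and, crucially, the verification that it admits neither of the structural decompositions above. This requires ruling out every candidate vertex $v$ as a splittable vertex and every candidate vertex-edge pair $(x, e)$ as a Haj\'os decomposition into $4$-critical pieces. This verification is typically accomplished by computational enumeration, since a purely structural proof would need to exploit delicate coloring-extension properties of a cleverly chosen $4$-critical graph.
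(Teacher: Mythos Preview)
Your outline differs substantially from the Jensen--Royle argument as summarized in the paper. The paper isolates a clean sufficient criterion (their proposition): if $G$ is $3$-connected, has $\chi(G)\ge k$, and satisfies the coloring-extension property (for every $v$ and every pair $u_1,u_2\in N_G(v)$ there is a $(k-1)$-coloring of $G-v$ in which each $u_i$ receives a color distinct from all other neighbors of $v$), then the last step of \emph{any} Haj\'os $k$-construction of $G$ must be a vertex identification on a non-critical graph. The $3$-connectivity hypothesis already dispatches your Case~A uniformly (since $G-x$ is then $2$-connected, so $G-x-e$ cannot disconnect), and condition~(3) handles Case~B. They then exhibit, for each $k\ge 4$ (with $k\ne 8$ handled separately), a concrete graph satisfying all three conditions; there is no inductive lift from a single base case.

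Your proposal has two genuine gaps. First, in Case~B you assume the pre-identification graph $H$ is recovered by a \emph{disjoint} bipartition $N_G(v)=A\sqcup B$, but the two identified vertices in $H$ may share common neighbors, so you only know $N_H(v_1)\cup N_H(v_2)=N_G(v)$ with possible overlap. Second, and more seriously, the join lift $G_k=G_4\vee K_{k-4}$ does not reduce cleanly to $G_4$. When you split a vertex $v\in V(G_4)$ inside $G_k$ into $v_1,v_2$, nothing forces both $v_1$ and $v_2$ to remain adjacent to every vertex of $K_{k-4}$; you only know each $w\in K_{k-4}$ is adjacent to at least one of them. Thus the resulting $H$ need not be of the form $(\text{split of }G_4)\vee K_{k-4}$, and ``restricting to a decomposition of $G_4$'' is not available. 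Similarly, your claim that a vertex $w\in K_{k-4}$ ``admits no nontrivial neighborhood split'' rests on the disjoint-partition assumption you have not justified; with overlap allowed, such splits exist and you must separately argue none yield a $k$-critical $H$. These issues are exactly what the paper's structural proposition sidesteps by working with the coloring-extension condition~(3) and producing examples for each $k$ directly.
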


The proof involves finding graphs that satisfy the three specifications in the proposition below for $k\geq 4, k \neq 8$.  
\begin{prop}
If $G$
\begin{enumerate}
\item is $3$-connected,
\item has chromatic number at least $k$, and 
\item for every $v \in V(G)$ and every pair $u_1, u_2 \in N_G(v)$ there exists a $(k-1)$-coloring $\varphi$ of $G -v$ such that $\varphi(w) \neq \varphi(u_i)$ for all $w \in N_G(v) \smallsetminus \{u_i\}, i =1,2$,
\end{enumerate}
then $G$ is a $k$-critical graph such that the last step of any possible Haj\'os $k$-construction of $G$ consists of a vertex identification on a graph that is not critical. 
\end{prop}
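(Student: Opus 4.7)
The plan is to establish three things in order: (i) $G$ is $k$-critical; (ii) the last step of any Haj\'os $k$-construction of $G$ cannot be a Haj\'os merge; and (iii) when the last step is a vertex identification, the graph on which that identification is performed is not $k$-critical.

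For (i), condition (2) gives $\chi(G)\geq k$, and applying (3) at any vertex $v$ (with any admissible $u_1,u_2$) produces a $(k-1)$-coloring of $G-v$, so $\chi(G-v)\leq k-1$ and hence $\chi(G)=k$. For a general edge $e=xy$, apply (3) at $v=x$ with $u_1=y$ and any $u_2\in N_G(x)\setminus\{y\}$, which exists because (1) forces $\deg(x)\geq 3$; the resulting $(k-1)$-coloring $\varphi$ of $G-x$ satisfies $\varphi(w)\neq\varphi(y)$ for every $w\in N_G(x)\setminus\{y\}$, so the extension $\varphi(x):=\varphi(y)$ is a proper $(k-1)$-coloring of $G-e$. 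Together these show $G$ is $k$-critical.

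For (ii), suppose $G=G_1\haj G_2$ with identified vertex $x$ and added edge $(y_1,y_2)$. The only $G_1$--$G_2$ edges in $G$ are incident to $x$ or equal to $(y_1,y_2)$, so $\{x,y_1\}$ is a $2$-cut separating $V(G_1)\setminus\{x,y_1\}$ from $V(G_2)\setminus\{x\}$; both sides are non-empty because $|V(G_i)|\geq k\geq 4$, contradicting (1). For (iii), reduce to a single-pair identification $G=H/(a_1\sim a_2)$ with identified vertex $v$ and $N_G(v)=N_H(a_1)\cup N_H(a_2)$, and split on whether the two neighborhoods nest. If there exist $u_1\in N_H(a_1)\setminus N_H(a_2)$ and $u_2\in N_H(a_2)\setminus N_H(a_1)$, apply (3) at $v$ with $(u_1,u_2)$ to obtain a $(k-1)$-coloring $\varphi$ of $G-v=H-\{a_1,a_2\}$ in which $\varphi(u_1)$ avoids $N_H(a_2)$ and $\varphi(u_2)$ avoids $N_H(a_1)$; then $\varphi(a_1):=\varphi(u_2)$ and $\varphi(a_2):=\varphi(u_1)$ yield a proper $(k-1)$-coloring of $H$, contradicting $\chi(H)\geq k$ in a Haj\'os $k$-construction, so this non-nested sub-case never occurs.

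In the remaining nested sub-case, say $N_H(a_1)\subseteq N_H(a_2)$ so that $N_G(v)=N_H(a_2)$, suppose for contradiction that $H$ is $k$-critical. Then a $(k-1)$-coloring $\varphi$ of $H-a_1$ exists, and the extension $\varphi(v):=\varphi(a_2)$ is a proper $(k-1)$-coloring of $G$, because every neighbor of $v$ in $G$ lies in $N_H(a_2)$ and $\varphi(a_2)\neq\varphi(w)$ for all $w\in N_H(a_2)$; this contradicts $\chi(G)=k$ from (i). Hence $H$ is not $k$-critical. The main obstacle I expect is the reduction from a multi-pair $\vid(H,L)$ to the single-pair case without losing track of which ``pre-identification graph'' the conclusion refers to: one must verify that non-adjacency of each pair is preserved as pairs are identified in sequence (per Definition~\ref{def:hajosconst}) and that the contradiction with $k$-criticality can be arranged to apply to $H$ itself rather than to some intermediate graph produced by the decomposition.
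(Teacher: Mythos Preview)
The paper does not supply its own proof of this proposition; it is quoted from Jensen and Royle~\cite{jensenroyle} as background and is not re-proved here. So there is no ``paper's proof'' to compare against. That said, your argument is correct and is essentially the natural (Jensen--Royle) proof.

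A few remarks. In part (ii) your cut argument is clean; note that $|V(G_i)|\geq k$ is guaranteed because every graph appearing in a Haj\'os $k$-construction has chromatic number at least $k$, and your bound works already for $k\geq 3$ (you write $k\geq 4$, which is the range in the Jensen--Royle theorem, but nothing in the cut argument needs it). In the non-nested sub-case of (iii) your extension $\varphi(a_1):=\varphi(u_2)$, $\varphi(a_2):=\varphi(u_1)$ is proper because $a_1,a_2$ are nonadjacent in $H$, so no constraint between their colors is needed; the contradiction is then with $\chi(H)\geq k$, which holds for every intermediary graph in a Haj\'os $k$-construction. In the nested sub-case your transfer of the $(k-1)$-coloring of $H-a_1$ to $G$ via $v\mapsto\varphi(a_2)$ is exactly right since $N_G(v)=N_H(a_2)$.

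Your flagged ``obstacle'' about reducing from $\vid(H,L)$ to a single pair is a non-issue in this paper's setup: in Definition~\ref{def:hajosconst} the basic vertex-identification operation is the identification of \emph{one} pair of nonadjacent vertices, and the notation $\vid(G,L)$ is merely shorthand for a sequence of such single steps. Hence ``the last step'' of a Haj\'os construction, when it is a vertex identification, is already of the form $G=\vid(H,[a_1,a_2])$; no reduction is needed, and the conclusion ``$H$ is not $k$-critical'' applies directly to the penultimate graph.
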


%%%%%%%%%%%%%%%%%%%%%%%%%%%%%%%%%%%%%%%%%%%%%%%%%%%%%%%%%%%%%%%%%%%%
\subsection{Neighborhood Complexes}

Recall that a \textit{simplicial complex} $X$ on $n$ vertices is a collection of subsets of $[n]:=\{1,2,\ldots,n\}$ that is closed under containment and contains all singletons.  
We call an element of $X$ a \textit{face} of $X$.  
We will make no distinction between an abstract simplicial complex $X$ and an arbitrary geometric realization $|X|$ of $X$ as a topological space.   
A topological space $X$ is called $k$\textit{-connected} if for every $0\leq \ell \leq k$, every continuous map from the boundary of $B^{\ell+1}$, the unit ball in $(\ell+1)$-dimensional Euclidean space, into $X$ can be extended to a continuous map from all of $B^{\ell+1}$ to $X$.  
Equivalently, the higher homotopy groups $\pi_\ell(X)$ vanish for all dimensions $\ell\leq k$.

For any graph $G$, Lov\'asz~\cite{LovaszChromaticNumberHomotopy} defined the \textit{neighborhood complex of $G$}, denoted $\N(G)$, to be the simplicial complex with vertex set $V(G)$ and facets given by $N_G(v)$ for all $v\in V(G)$, where $N_G(v)$ denotes the neighbors of $v$ in $G$ (not including $v$).
Lov\'asz introduced $\N(G)$ in order to provide a sharp lower bound for the chromatic number of the Kneser graphs, which he did using the following theorem.
\begin{theorem}[Lov\'{a}sz~\cite{LovaszChromaticNumberHomotopy}]
If $\N(G)$ is $k$-connected, then $\chi(G)\geq k + 3$.
\end{theorem}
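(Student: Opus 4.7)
The plan is to deduce the chromatic lower bound from the connectivity hypothesis on $\N(G)$ via an equivariant Borsuk--Ulam argument applied to an auxiliary $\ZZ_2$-space built from $G$. Assume $\chi(G) = n$, fix a proper coloring $c : V(G) \to [n]$ viewed as a graph homomorphism $G \to K_n$, and aim to show $n \geq k + 3$.

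First I would introduce the \emph{box complex} $B(G)$, whose vertex set is $V(G) \times \{0,1\}$ and whose simplices are the nonempty sets of the form $(A \times \{0\}) \cup (B \times \{1\})$, where $A, B \subseteq V(G)$ are disjoint and every vertex of $A$ is adjacent in $G$ to every vertex of $B$. The coordinate swap $(v,i) \mapsto (v,1-i)$ defines a free $\ZZ_2$-action on $B(G)$, since disjointness forces any nonempty simplex to differ from its image. A graph homomorphism $G \to H$ induces, in the obvious coordinatewise way, a $\ZZ_2$-equivariant simplicial map $B(G) \to B(H)$.

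Next I would establish two standard comparison facts from topological combinatorics. First, $B(G)$ is at least as connected as $\N(G)$, so the hypothesis of the theorem guarantees that $B(G)$ is a free $\ZZ_2$-space that is $k$-connected. Second, $B(K_n)$ is $\ZZ_2$-equivariantly equivalent to $S^{n-2}$ with the antipodal action; this is consistent with the observation that $\N(K_n)$ is already the boundary of the $(n-1)$-simplex on $[n]$, a copy of $S^{n-2}$. Composing the equivariant map $B(c) : B(G) \to B(K_n)$ induced by the coloring with this equivalence produces a $\ZZ_2$-equivariant map $B(G) \to S^{n-2}$.

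Finally I would invoke the $\ZZ_2$-equivariant form of the Borsuk--Ulam theorem: any $\ZZ_2$-equivariant map from a $k$-connected free $\ZZ_2$-space to $S^m$ requires $m \geq k + 1$. Applied to the map just constructed, this yields $n - 2 \geq k + 1$, that is, $\chi(G) \geq k + 3$. The main obstacle is justifying the two comparison facts, especially the first: transferring the connectivity of $\N(G)$ to $B(G)$ requires either a deformation-retract argument or a nerve-type equivalence between the two complexes, and matching up the antipodal $\ZZ_2$-structure on $S^{n-2}$ with the swap action on $B(K_n)$ is where the bulk of a self-contained proof would live.
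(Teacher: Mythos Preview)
The paper does not give a proof of this theorem; it is quoted as background with a citation to Lov\'asz's original paper and then used without further justification. There is therefore no in-paper argument to compare your proposal against.

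That said, your outline is the standard modern route to Lov\'asz's bound via the box complex and equivariant Borsuk--Ulam (as in Matou\v{s}ek's book or the Matou\v{s}ek--Ziegler hierarchy). One technical point to watch: as written, your definition of $B(G)$ appears to allow one of $A,B$ to be empty, and under that convention $B(K_n)$ is the full boundary of the $n$-cross-polytope, hence $\ZZ_2$-equivalent to $S^{n-1}$ rather than $S^{n-2}$; combined with ``$B(G)$ is at least as connected as $\N(G)$'' this would leave your final inequality off by one. The version of the box complex for which $B(K_n)\simeq_{\ZZ_2} S^{n-2}$ requires \emph{both} shores $A,B$ to be nonempty, and for that version one in fact has a homotopy equivalence $B(G)\simeq \N(G)$, so the connectivity transfer is exact rather than merely an inequality. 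With that adjustment your sketch is correct, and you are right to flag the two comparison facts (the equivalence $B(G)\simeq\N(G)$ and the identification $B(K_n)\simeq_{\ZZ_2}S^{n-2}$) as where the genuine work lies.
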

There are several famous families of graphs, e.g. Kneser and stable Kneser graphs, for which these topological lower bounds (or equivalent techniques) yield the only known proofs of their chromatic numbers.
Note that $\N(G)$ is $0$-connected if and only if it is path-connected, and being $0$-connected implies having chromatic number greater than $2$.
For connected bipartite graphs, having a disconnected $\N(G)$ characterizes this family.
The following proposition justifies our assumptions throughout this work that when $G$ is connected with $\chi(G)\geq 3$, $\N(G)$ is path-connected.

\begin{prop} \label{prop:connected}
The complex $\N(G)$ is path-connected if and only if $G$ is connected and not bipartite. 
\end{prop}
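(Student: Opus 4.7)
The plan is to translate path-connectivity of $\N(G)$ into a purely combinatorial property of $G$. The key observation is that a pair of distinct vertices $u, w \in V(G)$ spans an edge of $\N(G)$ if and only if there exists $v \in V(G)$ with $\{u,w\} \subseteq N_G(v)$, i.e., if and only if $u$ and $w$ share a common neighbor in $G$. Hence the $1$-skeleton of $\N(G)$ is the graph on $V(G)$ in which $uw$ is an edge precisely when $u,w$ lie at graph-distance two in $G$, and path-connectivity of $\N(G)$ reduces to connectivity of this auxiliary graph.

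For the forward direction I argue contrapositively. If $G$ is disconnected, then every $N_G(v)$ is contained in the component of $v$, so no facet of $\N(G)$ meets more than one component of $G$, and the components of $G$ partition the $1$-skeleton of $\N(G)$ into disjoint subgraphs. If instead $G$ is connected but bipartite with parts $A, B$, then each $N_G(v)$ lies entirely in the part opposite to $v$, so every facet is contained in either $A$ or $B$, and again the $1$-skeleton splits along the bipartition. In both cases $\N(G)$ fails to be path-connected.

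For the reverse direction, suppose $G$ is connected and not bipartite, so $G$ contains an odd cycle $C \colon v_0 v_1 \cdots v_{2k} v_0$. First I show all vertices of $C$ lie in a single component of $\N(G)$: the vertices $v_i$ and $v_{i+2}$ (indices modulo $2k+1$) share $v_{i+1}$ as a common neighbor in $G$ and are therefore joined by an edge in $\N(G)$, and since $\gcd(2, 2k+1) = 1$, iterating this step of $2$ visits every vertex of $C$. For an arbitrary $w \in V(G)$, connectedness of $G$ produces a walk $w = u_0, u_1, \ldots, u_m$ with $u_m \in V(C)$. Pairing consecutive steps, the same common-neighbor argument shows $u_0$ is connected to $u_m$ in $\N(G)$ when $m$ is even. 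When $m$ is odd, I instead route through $u_{m-1}$: it shares the common neighbor $u_m$ with any cycle-neighbor of $u_m$, placing $u_0$ in the same $\N(G)$-component as $C$. Together these cases show every vertex of $G$ lies in the component of $C$.

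The main subtlety is the parity issue in the reverse direction: because a single $G$-adjacency need not produce an $\N(G)$-edge, one cannot simply transport $w$ along the walk $u_0 u_1 \cdots u_m$ one step at a time; odd-length walks force us to absorb the last step by borrowing an edge from the odd cycle $C$. Beyond this small parity maneuver, the proof is essentially a bookkeeping exercise between the distance-two structure of $G$ and the $1$-skeleton of $\N(G)$.
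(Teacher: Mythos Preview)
Your argument is correct and follows essentially the same approach as the paper's: the forward direction is the same contrapositive split of facets across components or bipartition classes, and the reverse direction uses an odd cycle to repair the parity of walks, organized as ``every vertex connects to $C$ in $\N(G)$'' rather than the paper's ``any two vertices admit an even walk.'' One small wording slip worth fixing: sharing a common neighbor is equivalent to admitting a \emph{walk} of length two, not to being at graph-distance exactly two (vertices of a triangle are at distance one yet share a common neighbor), but your actual proof steps use the correct common-neighbor criterion and are unaffected.
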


\begin{proof}
If $G$ is not connected, then it is immediate that $\N(G)$ is not connected.
Let $G$ be a connected bipartite graph with bipartition $V(G) = A \uplus B$, where $\uplus$ denotes disjoint union. 
For all $a \in A, N(a) \subseteq B$ and for all $b \in B, N(b) \subseteq A$. 
Therefore, the neighborhood complex induced by $B$ is disjoint from the neighborhood complex induced by $A$ and $\N(G) = \N(A) \uplus \N(B)$, hence $\N(G)$ is not connected.

Suppose now that $G$ is connected and not bipartite, so there exists an odd cycle $C = c_0, c_1,..., c_n$ in $G$.
We prove that between any two vertices $x,y \in V(G)$ there is a walk of even length, from which it follows that $x$ and $y$ are connected by a path in $\N(G)$.
Since $G$ is connected, there exists a walk $W_x$ from $x$ to some $c_x\in C$, and a walk $W_y$ from $y$ to some $c_y\in C$.
Since $C$ has odd length, there exists a walk $C_{odd}$ in $C$ of odd length from $c_x$ to $c_y$, and there also exists a walk $C_{even}$ in $C$ of even length from $c_x$ to $c_y$.
For any given even/odd parities of $W_x$ and $W_y$, one can connect $x$ and $y$ by a path of even length that starts with $W_x$, continues through either $C_{odd}$ or $C_{even}$, and concludes with $W_y$.
Thus, $x$ and $y$ are path-connected in $\N(G)$.
\end{proof}

In general, it is possible for there to be arbitrarily large gaps between $\chi(G)$ and $k+3$ where $\N(G)$ is $k$-connected, for example Matou{\v{s}}ek and Ziegler [Remark (H1) in~\cite{MatousekZiegler}] make a remark that implies the following.
\begin{prop}
If $G$ does not contain a $4$-cycle, then $\N(G)$ is at most $0$-connected.
\end{prop}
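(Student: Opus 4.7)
The plan is to first dispose of the disconnected and bipartite cases by invoking Proposition~\ref{prop:connected}: in both situations $\N(G)$ is already not path-connected, hence not $1$-connected. The substantive case is $G$ connected, non-bipartite, and $C_4$-free, where we must exhibit a loop in $\N(G)$ that is not nullhomotopic.

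The main idea is to build a continuous map $\phi\colon \N(G)\to G$, where $G$ is viewed as a $1$-dimensional CW complex, and to detect nontriviality in $\pi_1(\N(G))$ by pushing forward to $\pi_1(G)$. On vertices set $\phi(v)=v$; on an edge $\{u,v\}$ of $\N(G)$, choose a common neighbor $w$ of $u$ and $v$ in $G$ and send the edge to the length-two path $u - w - v$. The $C_4$-free hypothesis is essential here: any two vertices of $G$ have at most one common neighbor, so $w$ is uniquely determined and $\phi$ is unambiguous. For a $2$-simplex $\{u,v,w\}$ of $\N(G)$ with (unique) common neighbor $x$, the three boundary edges all map via $x$, and their concatenation is the loop $u - x - v - x - w - x - u$ in $G$, which is nullhomotopic in the $1$-complex $G$ because each successive out-and-back along an edge incident to $x$ cancels. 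Hence $\phi$ extends continuously over every $2$-cell, and since $\pi_1$ depends only on the $2$-skeleton we need not worry about higher-dimensional simplices.

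To produce a detecting loop, fix any odd cycle $C = v_0 v_1 \cdots v_{2k} v_0$ in $G$ (which exists by non-bipartiteness), and let $\gamma$ be the loop in the $1$-skeleton of $\N(G)$ that visits $v_0,v_2,v_4,\ldots,v_{2k},v_1,v_3,\ldots,v_{2k-1},v_0$; every consecutive pair is an edge of $\N(G)$ witnessed by the index-midpoint on $C$ (taken modulo $2k+1$). Tracing $\phi$ along $\gamma$ unfolds each edge $\{v_i,v_{i+2}\}$ to the sub-path $v_i - v_{i+1} - v_{i+2}$ in $G$, and concatenating around $\gamma$ produces exactly the closed walk $C$ traversed twice, so $\phi_\ast[\gamma] = [C]^2$ in $\pi_1(G)$.

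Finally, because $G$ is a graph, $\pi_1(G)$ is a free group, and the simple cycle $C$ represents a nontrivial element (it lifts to a non-closed path in the tree universal cover); free groups are torsion-free, so $[C]^2\neq 1$, and consequently $[\gamma]\neq 1$ in $\pi_1(\N(G))$. The main obstacle in a careful write-up is the verification that $\phi$ is well-defined on $2$-simplices, which is exactly where the $C_4$-free hypothesis is used in an essential way; the remaining steps (the concatenation identifying $\phi_\ast[\gamma]$ with $[C]^2$, and the torsion-freeness of $\pi_1(G)$) are routine.
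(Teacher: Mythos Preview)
Your argument is correct. The paper does not actually supply a proof of this proposition; it attributes the statement to a remark of Matou\v{s}ek and Ziegler, so there is no in-paper argument to compare against directly. That said, let me confirm the key steps and note how your approach relates to the structural reason the result holds.

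The crucial use of $C_4$-freeness is exactly where you put it: any two vertices of $G$ have at most one common neighbour, so the map $\phi$ on edges of $\N(G)$ is well-defined, and for any $2$-face $\{u,v,w\}$ the three pairwise witnesses coincide with the unique common neighbour $x$, forcing $\phi(\partial\{u,v,w\})$ to be the backtracking walk $u\,x\,v\,x\,w\,x\,u$ and hence nullhomotopic in $G$. Your handling of higher cells is also fine: since the inclusion of the $2$-skeleton into $\N(G)$ is a $\pi_1$-isomorphism, the induced map $\phi_*\colon \pi_1(\N(G))\to\pi_1(G)$ is well-defined even though $\phi$ itself is only specified on the $2$-skeleton. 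The identification $\phi_*[\gamma]=[C]^2$ is a clean calculation, and the conclusion via torsion-freeness of free groups is sound.

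The underlying structural fact (implicit in the Matou\v{s}ek--Ziegler remark) is that when $G$ is $C_4$-free the maximal faces $N_G(v)$ of $\N(G)$ pairwise intersect in at most a single vertex, so $\N(G)$ is homotopy equivalent to a $1$-dimensional complex; one then only needs to rule out contractibility in the connected non-bipartite case. Your construction can be read as an explicit realisation of this: the map $\phi$ collapses each facet $N_G(v)$ onto the star of $v$ in $G$, and the nontrivial loop you exhibit is precisely the obstruction to $\N(G)$ being a tree. Your approach has the advantage of producing a concrete nontrivial class in $\pi_1(\N(G))$ rather than appealing to a homotopy-type classification, and it makes the role of the odd cycle completely transparent.
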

Thus, for example, the neighborhood complex for graphs with girth greater than $4$ are not $1$-connected.

%%%%%%%%%%%%%%%%%%%%%%%%%%%%%%%%%%%%%%%%%%%%%%%%%

\subsection{Discrete Morse Theory}

Discrete Morse theory was first developed by R. Forman in \cite{FormanMorseTheory} and has since become a powerful tool for topological combinatorialists.
The main idea of the theory is to pair faces within a simplicial complex in such a way that we obtain a sequence of collapses yielding a homotopy equivalent cell complex.

\begin{definition}\label{def:partialmatching}

A \textit{partial matching} in a poset $P$ is a partial matching in the underlying graph of the Hasse diagram
of $P$, i.e., it is a subset $M\subseteq P \times P$ such that
\begin{itemize}
\item
$(a,b)\in M$ implies $b \succ a;$ i.e. $a<b$ and no $c$ satisifies $a<c<b$.
\item
each $a\in P$ belongs to at most one element in $M$.
\end{itemize}
When $(a,b) \in M$, we write $a=d(b)$ and $b=u(a)$.
\item
A partial matching on $P$ is called \emph{acyclic} if there does not exist a cycle
\[
a_1 \prec u(a_1) \succ a_2 \prec u(a_2) \succ \cdots \prec u(a_m) \succ a_1
\]
with $m\ge 2$ and all $a_i\in P$ being distinct.

\end{definition}

Given an acyclic partial matching $M$ on a poset $P$, an element $c$ is \emph{critical} if it is unmatched.
If every element is matched by $M$, $M$ is called \emph{perfect}.
We are now able to state the main theorem of discrete Morse theory as given in~\cite[Theorem 11.13]{KozlovBook}

\begin{theorem}\label{thm:maindiscretemorse}
Let $\Delta$ be a polyhedral cell complex and let $M$ be an acyclic matching on the face poset of $\Delta$.
Let $c_i$ denote the number of critical $i$-dimensional cells of $\Delta$.
The space $\Delta$ is homotopy equivalent to a cell complex $\Delta_c$ with $c_i$ cells of dimension $i$ for each $i\ge 0$, plus a single $0$-dimensional cell in the case where the emptyset is paired in the matching.
\end{theorem}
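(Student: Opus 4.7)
The plan is to realize the matching $M$ as a sequence of elementary collapses, which are known to preserve homotopy type. Recall that an \emph{elementary collapse} of a cell complex removes a pair $(\sigma,\tau)$ with $\sigma \prec \tau$ and $\dim(\tau)=\dim(\sigma)+1$, where $\sigma$ is a free face of $\tau$, meaning $\tau$ is the only coface of $\sigma$ at the moment of removal; such a collapse produces a subcomplex that is a strong deformation retract of the original. Each matched pair $(a,u(a))$ in $M$ is a candidate for such a collapse, so the overall strategy is to order the pairs appropriately, execute the collapses in that order, and then interpret what remains as $\Delta_c$.

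First, I would use the acyclicity of $M$ to produce a total order on the matched pairs under which the collapses can actually be performed. Define an auxiliary binary relation $\triangleleft$ on the face poset by taking the original cover relations $\sigma \prec \tau$ together with the reversed relations $u(a) \triangleleft a$ for every matched pair $(a,u(a))$. Acyclicity of $M$ is precisely what is needed to ensure that $\triangleleft$ has no directed cycles, so this relation extends to a linear order; processing matched pairs in the top-down order induced by any such extension will govern the sequence of collapses.

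The central technical step---and the main obstacle---is verifying that at the moment we wish to collapse a pair $(a,u(a))$, the face $a$ is indeed a free face of $u(a)$ in the current subcomplex. To do this, one shows that every other coface $b$ of $a$ with $\dim(b)=\dim(u(a))$ has either already been removed by an earlier collapse $(a',b)$ with $a'\neq a$, or belongs to a higher-dimensional pair already executed. Acyclicity is exactly what rules out the cyclic dependencies that would prevent such a scheduling, and making this argument careful---tracking which cells have been removed and in what order---is the essential content of the proof. An induction on the number of matched pairs then completes the collapsing process.

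Finally, once all matched pairs have been collapsed, what remains is precisely the set of critical (unmatched) cells, giving a cell complex $\Delta_c$ with $c_i$ cells in each dimension $i$. The only subtlety is the case in which the empty face $\emptyset$, regarded as the augmented minimum of the face poset, is itself matched to some vertex $v$: then the removal of $v$ is not literally an elementary collapse, since the empty face is not a genuine cell, but it effectively deletes one $0$-cell too many from the homotopy type. This is corrected by adjoining a single $0$-cell to $\Delta_c$, yielding the stated conclusion.
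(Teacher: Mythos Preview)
The paper does not prove this statement; it is quoted from Kozlov's book as background. Your sketch, however, contains a genuine gap: the plan to reach $\Delta_c$ by a sequence of elementary collapses cannot work, because a matched face $a$ need never become free. Your claim that every other coface $b$ of $a$ ``has either already been removed by an earlier collapse or belongs to a higher-dimensional pair already executed'' ignores the case where $b$ is a \emph{critical} cell---critical cells are never removed. Concretely, take the boundary of a $2$-simplex on vertices $1,2,3$ with the acyclic matching $1\mapsto 12$, $2\mapsto 23$; the critical cells are $3$ and $13$. Vertex $1$ lies in the critical edge $13$, so $(1,12)$ can never be collapsed, and vertex $2$ lies in $12$, which cannot be removed until $(1,12)$ has been. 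More fundamentally, the critical cells $\{3,13\}$ do not form a subcomplex of $\Delta$, so no chain of elementary collapses---each yielding a subcomplex---can possibly terminate at $\Delta_c$; here $\Delta_c$ is a circle built from one $0$-cell and one $1$-cell with a new attaching map.

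The standard arguments (Forman, Kozlov) instead build $\Delta$ up along a linear extension of the modified Hasse diagram, showing that attaching a matched pair together leaves the homotopy type unchanged while each critical cell contributes a genuine new cell; equivalently one performs ``internal collapses'' that remove a matched pair and re-glue the neighboring cells, producing a new CW complex with new attaching maps rather than a subcomplex. The key conceptual point your outline misses is that $\Delta_c$ is not in general a deformation retract sitting inside $\Delta$.
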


It is often useful to create acyclic partial matchings on different sections of the face poset of a simplicial complex and then combine
them to form a larger acyclic partial matching on the entire poset.
This process is detailed in the following theorem known as the \textit{Cluster Lemma} in \cite{JonssonBook} and the \textit{Patchwork Theorem} in \cite{KozlovBook}.

\begin{theorem}\label{thm:patchwork}
Assume that $\varphi : P \rightarrow Q$ is an order-preserving map.
For any collection of acyclic matchings on the subposets $\varphi^{-1}(q)$ for $q\in Q$, the union of these matchings is itself an acyclic matching on $P$.
\end{theorem}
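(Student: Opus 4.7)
The plan is to verify the two conditions of an acyclic matching in turn: first that $M = \bigcup_{q \in Q} M_q$ is a partial matching on $P$ at all (requiring checking that the pairs are covering pairs in $P$, not merely in the fibers), and then that no alternating cycle can exist in $M$, the content of which reduces to forcing any hypothetical cycle into a single fiber using order-preservation of $\varphi$.

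First I would address the covering relation. For $(a,b) \in M_q$, by definition $b$ covers $a$ in $\varphi^{-1}(q)$. To upgrade this to a covering relation in $P$, suppose $a < c < b$ in $P$ for some $c \in P$. Since $\varphi$ is order-preserving and $\varphi(a) = \varphi(b) = q$, we obtain $q = \varphi(a) \leq \varphi(c) \leq \varphi(b) = q$, forcing $c \in \varphi^{-1}(q)$ and contradicting the fact that $b$ covers $a$ in $\varphi^{-1}(q)$. The ``at most one pair'' condition is immediate: the fibers $\varphi^{-1}(q)$ partition $P$, so any $p \in P$ lies in exactly one fiber and hence can be matched by at most one of the $M_q$.

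The main step is acyclicity. Suppose for contradiction $M$ admits a cycle
\[
a_1 \prec u(a_1) \succ a_2 \prec u(a_2) \succ \cdots \prec u(a_m) \succ a_1
\]
with $m \geq 2$ and all $a_i$ distinct. For each $i$, the pair $(a_i, u(a_i)) \in M_{q_i}$ for some $q_i \in Q$, so $\varphi(a_i) = \varphi(u(a_i)) = q_i$. The relation $u(a_i) \succ a_{i+1}$ (indices mod $m$) gives $a_{i+1} < u(a_i)$ in $P$, so order-preservation yields $q_{i+1} = \varphi(a_{i+1}) \leq \varphi(u(a_i)) = q_i$. Iterating around the cycle produces $q_1 \geq q_2 \geq \cdots \geq q_m \geq q_1$, hence all $q_i$ coincide with some common value $q$. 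Thus every $a_i$ and $u(a_i)$ lies in $\varphi^{-1}(q)$, and the alleged cycle is actually a cycle for the matching $M_q$ on $\varphi^{-1}(q)$, contradicting the assumed acyclicity of $M_q$.

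The only subtlety I anticipate is the distinction between covering in a fiber and covering in $P$; once one has the observation that order-preservation sandwiches intermediate elements into the same fiber, both that verification and the cycle argument follow from the same mechanism. No delicate estimates or case analysis should be required.
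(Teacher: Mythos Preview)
Your proof is correct and is the standard argument for the Patchwork Theorem. Note, however, that the paper does not supply its own proof of this statement: it is quoted as a known result, attributed to Jonsson's book (as the \emph{Cluster Lemma}) and Kozlov's book (as the \emph{Patchwork Theorem}), and used as a black box in the later discrete Morse theory computations. So there is no paper-proof to compare against; your argument is exactly the one found in those references, with the same two-step structure (fibers force covers in the fiber to be covers in $P$, and a hypothetical cycle is trapped in a single fiber by the chain $q_1 \geq q_2 \geq \cdots \geq q_m \geq q_1$).
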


%%%%%%%%%%%%%%%%%%%%%%%%%%%%%%%%%%%%%%%%%%%%%%%%%%%%%%%
\section{Topological effects of Haj\'os-type operations}\label{sec:operations}

In this section, we investigate the effects  of Haj\'os merges and vertex identifications on $\N(G)$, with our main result being Corollary~\ref{cor:vid5top}.
We assume throughout this section that $G_1$ and $G_2$ are connected graphs with chromatic numbers at least $3$.
We begin by showing the neighborhood complex $\N(G_1 \haj G_2)$ is typically not $1$-connected.
Recall that a \emph{bridge} in a graph $G$ is an edge whose deletion increases the number of connected components of $G$.

\begin{lemma}
$G_1 \Delta_H G_2$ has a bridge if at least one of the edges used in the Haj\'os merge is a bridge. 
\end{lemma}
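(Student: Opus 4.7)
The plan is to take the new edge $(y_1,y_2)$ produced by the Haj\'os merge and show that it is itself a bridge of $G_1\haj G_2$. Without loss of generality assume the bridge hypothesis holds for the edge $(x_1,y_1)$ of $G_1$, and let $A_1$ be the vertex set of the connected component of $G_1-(x_1,y_1)$ that contains $y_1$, so that $x_1\notin A_1$.

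Next I would describe exactly which edges of $G_1\haj G_2$ are incident to vertices in $A_1$. By the construction in Definition~\ref{def:hajosconst}, the edges of $G_1\haj G_2$ are the edges of $G_1-(x_1,y_1)$, the edges of $G_2-(x_2,y_2)$, and the single new edge $(y_1,y_2)$, with the identification $x_1=x_2$. Edges of $G_1-(x_1,y_1)$ incident to $A_1$ stay inside $A_1$ by the definition of a connected component; edges of $G_2-(x_2,y_2)$ cannot touch $A_1$ since the only vertex $G_1$ and $G_2$ share after identification is $x_1=x_2$, and that vertex lies outside $A_1$. Hence the only edge of $G_1\haj G_2$ leaving $A_1$ is $(y_1,y_2)$.

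I would then conclude by deleting $(y_1,y_2)$: since this is the unique edge joining $A_1$ to $V(G_1\haj G_2)\setminus A_1$, its removal strictly increases the number of connected components, so it is a bridge. The symmetric case where $(x_2,y_2)$ is the bridge in $G_2$ is handled identically by swapping roles.

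I do not anticipate a real obstacle here; the only subtle point is making sure the identified vertex $x_1=x_2$ lies in the ``other'' side $B_1$ of the split of $G_1$, which is immediate because $x_1$ and $y_1$ are the endpoints of the bridge $(x_1,y_1)$ and hence lie in different components of $G_1-(x_1,y_1)$. No hypothesis on connectivity of $G_1\haj G_2$ itself is needed, since being a bridge only requires that deletion increase the component count.
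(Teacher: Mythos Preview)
Your argument is correct and is essentially the same as the paper's: both isolate the component of $G_1-(x_1,y_1)$ on the side of the bridge \emph{not} containing the identified vertex and observe that the newly added edge of the Haj\'os merge is the unique edge leaving it. The only superficial difference is labeling (the paper's proof swaps the roles of the $x$'s and $y$'s relative to Definition~\ref{def:hajosconst}, so its bridge is called $(x_1,x_2)$ rather than $(y_1,y_2)$); your write-up is in fact a bit more explicit about why no $G_2$-edge can touch $A_1$.
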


\begin{proof}
Suppose $G_1$ and $G_2$ are two connected graphs with edges $(x_1, y_1)$ and $(x_2,y_2)$ respectively. Suppose under the Haj\'os construction $y_1$ and $y_2$ get identified as $y_1y_2$, while $(x_1,y_1)$ and $(x_2,y_2)$ get deleted and $(x_1,x_2)$ is added to create $G_1 \haj G_2$. Let $G_i':= G_i \smallsetminus (x_i,y_i)$. Suppose that $(x_1,y_1)$ is a bridge in $G_1$ such that $G_1' := A \uplus B$, then the Hajos merge produces a graph with $(x_1, x_2)$ a bridge between $A$ and $\vid(G_2' \uplus B,[y_1,y_2])$. 
\end{proof}

\begin{lemma}\label{lem:bipartite}
Let $G$ be a connected non-bipartite graph such that $G' := G \smallsetminus (x,y)$ is connected bipartite. Then, $\mathcal{N}(G') = A \uplus B$, where $N_{G'}(x), N_{G'}(y) \subseteq A$ and $ x, y \in B$. 
\end{lemma}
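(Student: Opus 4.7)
The proof plan is to use the bipartition of $G'$ explicitly, and to show that $x$ and $y$ must lie in the same bipartition class. From there, the conclusion follows from (the proof of) Proposition~\ref{prop:connected}.

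First, since $G'$ is connected and bipartite, it has a unique bipartition $V(G') = A \uplus B$. By the proof of Proposition~\ref{prop:connected}, the neighborhood complex decomposes as $\N(G') = \N(A) \uplus \N(B)$, where $\N(A)$ consists of the facets $N_{G'}(b)$ for $b \in B$ (whose vertex set lies in $A$) and symmetrically for $\N(B)$. So the required decomposition of $\N(G')$ into two components already exists; what remains is to identify which bipartition class contains $x$ and $y$.

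Next I would argue by contradiction: suppose $x$ and $y$ lie in different bipartition classes of $G'$, say $x \in A$ and $y \in B$. Then the edge $(x,y)$ is compatible with the bipartition $A \uplus B$, so $G = G' \cup \{(x,y)\}$ is also bipartite with the same bipartition. This contradicts the hypothesis that $G$ is non-bipartite. Therefore $x$ and $y$ lie in a common bipartition class, which we rename $B$. Setting $A$ to be the other class, we then immediately have $N_{G'}(x) \subseteq A$ and $N_{G'}(y) \subseteq A$, since every neighbor in $G'$ of a vertex in one bipartition class lies in the opposite class.

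Combining the two observations, the component of $\N(G')$ labeled $A$ contains the facets $N_{G'}(x)$ and $N_{G'}(y)$, while $x, y \in B$, matching the statement of the lemma. No real obstacle arises here; the only subtle point is making sure to invoke the uniqueness of the bipartition of a connected bipartite graph so that the contradiction in the second step is legitimate, which follows from connectedness of $G'$.
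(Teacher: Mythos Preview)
Your proof is correct and follows essentially the same approach as the paper's: both arguments reduce to showing that $x$ and $y$ lie in the same bipartition class of $G'$, and hence their $G'$-neighborhoods lie in the other class. The paper phrases this via path parity (there is an even but no odd path from $x$ to $y$ in $G'$), while you argue more directly that if $x$ and $y$ were in opposite classes then adding back $(x,y)$ would leave $G$ bipartite; the two formulations are equivalent.
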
 

\begin{proof}
  Since the deletion of the edge $(x,y)$ results in a bipartite graph, $(x,y)$ must be part of all the odd cycles and no even cycles in $G$.
  Hence, by the connectivity of $G'$, there must be an even path $p$ from $x$ to $y$ and no such odd path.
  Therefore, there is no even path from $y$ to any neighbor $v \in N(x)$ or from $x$ to any neighbor $w \in N(y)$, which implies $y$ and $N_{G'}(x)$ are not in the same connected component.
  Similarly for $x$ and $N_{G'}(y)$.
  In addition, notice that $x$ and $N_{G'}(x)$ cannot be in the same component because they would need to be connected by an even path in $G'$ creating an odd cycle, a contradiction to $G'$ being bipartite.
  Similarly for $y$ and $N_{G'}(y)$.
  Since a connected bipartite graph gives rise to a neighborhood complex with two connected components, the result follows.
\end{proof}

\begin{theorem}\label{thm:hajostop}
For two connected graphs $G_1$ and $G_2$ with edges $(x_1, y_1)$ and $(x_2,y_2)$ respectively such that either
\begin{enumerate}
\item $\chi(G_1), \chi(G_2) \geq 3$ and neither $(x_1, y_1)$ nor $(x_2,y_2)$ is a bridge or
\item $\chi(G_1) \geq 3$, $\chi(G_2) \geq 4$, and $(x_2, y_2)$ is not a bridge,
\end{enumerate}
$\N(G_1 \haj G_2)$ is homotopy equivalent to a wedge of at least one copy of $S^1$ with another space.
\end{theorem}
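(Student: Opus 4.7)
The plan is to use discrete Morse theory to extract an $S^1$ wedge summand from $\N(G)$, where $G := G_1 \haj G_2$. The driving observation is that although $y_1$ and $y_2$ are adjacent in $G$, they share no common neighbor in $G$: every neighbor of $y_1$ other than $y_2$ lies in $V(G_1)\setminus\{x_1\}$, every neighbor of $y_2$ other than $y_1$ lies in $V(G_2)\setminus\{x_2\}$, and the merged vertex $x$ is adjacent to neither. Consequently $\{y_1, y_2\}$ is \emph{not} an edge of $\N(G)$, and this missing edge is what forces an $S^1$ summand.

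Given this, I would proceed in three steps. First, pick $a_1 \in N_{G_1}(y_1)\setminus\{x_1\}$ and $a_2 \in N_{G_2}(y_2)\setminus\{x_2\}$, which exist under the bridge and chromatic hypotheses. The facets $N_G(y_1)\supseteq\{a_1, y_2\}$ and $N_G(y_2)\supseteq\{a_2, y_1\}$ supply two edges of $\N(G)$; since $\N(G)$ is path-connected by Proposition~\ref{prop:connected}, I can close these into a loop $\gamma$ by appending paths in $\N(G)$ joining $a_1$ and $a_2$, for example passing through the facet $N_G(x)$. Second, I would show $[\gamma]\neq 0$ in $H_1(\N(G))$. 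In case~(1), when $G_i':=G_i\setminus(x_i,y_i)$ is bipartite, Lemma~\ref{lem:bipartite} splits $\N(G_i')$ into two components that separate $\{x_i, y_i\}$ from $N_{G_i'}(x_i) \cup N_{G_i'}(y_i)$, blocking any $2$-chain from filling $\gamma$; when both $G_i'$ are non-bipartite, a parity-of-walks argument in $G$ achieves the same conclusion. In case~(2), the hypothesis $\chi(G_2)\geq 4$ combined with $\chi(G_2)\leq \chi(G_2')+1$ forces $G_2'$ non-bipartite, so the same parity argument applies regardless of whether $(x_1, y_1)$ is a bridge.

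Third, I would invoke the Patchwork Theorem~\ref{thm:patchwork} to build an acyclic matching on the face poset of $\N(G)$ pairing off every cell outside $\gamma$, so that $\gamma$ contributes a single critical $1$-cell and the remaining critical cells assemble into some complementary space $X$; Theorem~\ref{thm:maindiscretemorse} then delivers $\N(G) \simeq S^1 \vee X$. The main obstacle I expect is this third step in the subcases where $\N(G_1')\uplus\N(G_2')$ fails to be connected, namely, when some $G_i'$ is bipartite (case~(1)) or when $(x_1, y_1)$ is a bridge (case~(2)). In those scenarios the Haj\'os merge reglues multiple components through the edge $(y_1, y_2)$ and the merged vertex $x$, and the acyclic matching must track this gluing carefully. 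My plan would be to define an order-preserving map from the face poset of $\N(G)$ to a small control poset encoding the decomposition, construct acyclic matchings fiberwise, and combine them via Theorem~\ref{thm:patchwork}; verifying acyclicity across the fibers while preserving exactly one critical $1$-cell arising from $\gamma$ is where the delicate case analysis will occur.
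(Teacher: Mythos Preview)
Your proposal has a genuine gap in step~3, and it is not merely a matter of filling in details. Even if you succeed in building an acyclic matching whose critical cells consist of one $0$-cell, one $1$-cell coming from $\gamma$, and some collection of higher cells, Theorem~\ref{thm:maindiscretemorse} only tells you that $\N(G)$ is homotopy equivalent to a CW complex with those cell \emph{counts}; it says nothing about the attaching maps. The higher critical cells may attach non-trivially along the critical $1$-cell, so there is no reason the result should split as $S^1\vee X$. (The torus $T^2$ is the standard cautionary example: it admits a Morse decomposition with one $0$-cell, two $1$-cells, and one $2$-cell, yet $T^2$ is not a wedge with any $S^1$.) Thus, exhibiting a non-trivial $[\gamma]\in H_1$ together with a Morse matching is simply not enough to reach the wedge conclusion the theorem asserts. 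Your step~2 is also left as a sketch (``parity-of-walks argument'', ``blocking any $2$-chain''), and in case~(2) with $(x_1,y_1)$ a bridge your vertex $a_1$ need not exist, since $y_1$ may be a pendant in $G_1$.

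The paper's argument avoids these difficulties by a direct cell-attachment description rather than discrete Morse theory. It starts from $\N(G_1')\uplus\N(G_2')$ (where $G_i'=G_i\setminus(x_i,y_i)$) and shows that passing to $\N(G_1\haj G_2)$ amounts, up to homotopy, to attaching exactly four $1$-cells: one from identifying $y_1$ with $y_2$, one from the join $N_{G_1'}(y_1)\ast N_{G_2'}(y_2)$, and one each from coning $N_{G_i'}(x_i)$ over the vertex on the other side. Since each attachment is between contractible pieces sitting in distinct components, the result is explicitly a graph of spaces. Lemma~\ref{lem:bipartite} then pins down which components the four endpoints land in (depending on whether $G_i'$ is bipartite), and a short count shows the four $1$-cells always connect at most four components in a way that creates at least one cycle. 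That cycle is a genuine $S^1$ wedge summand because it arises from a $1$-cell whose endpoints already lie in the same component \emph{before} it is attached---no Morse theory or homology computation is needed.
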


\begin{proof}
Suppose under the Haj\'os construction $y_1$ and $y_2$ get identified as $y_1y_2$, while $(x_1,y_1)$ and $(x_2,y_2)$ get deleted and $(x_1,x_2)$ is added to create $G_1 \haj G_2$. Let $G_i':= G_i \smallsetminus (x_i,y_i)$.

%%%%%%%%%%%%%%%%%%%%%%%%%%

\begin{figure}
\begin{center}
\begin{tikzpicture} [scale = 0.25, every node/.style={}]

\draw[thick, fill = white] (0,0) ellipse (7 and 12);
\draw[thick, fill = white] (20,0) ellipse (7 and 12);
\draw[thick, fill=gray, opacity=.3] (0,-3) circle (2);
\draw[thick, fill=gray, opacity=.3] (0,0) circle (2);
\draw[thick, fill=gray, opacity=.3] (20,3) circle (2);
\draw[thick, fill=gray, opacity=.3] (20,0) circle (2);
\draw[thick] (0,-3) circle (2);
\draw[thick] (0,0) circle (2);
\draw[thick] (20,3) circle (2);
\draw[thick] (20,0) circle (2);

\draw[fill = gray] (0,-10) -- (20,-10);
\draw[fill = gray] (0,-2) -- (20,-2);
\draw[fill = gray] (0,2) -- (20,2);
\draw[fill = gray] (0,10) -- (19.3,1.15);
\draw[fill = gray] (0,10) -- (20.7,4.85);
\draw[fill = gray] (20,-6) -- (.7,-1.1);
\draw[fill = gray] (20,-6) -- (-.65,-4.95);

\node at (0,10) [circle,fill,inner sep=1.5pt]{};
\node at (20,-6)[circle,fill,inner sep=1.5pt]{};
\node at (0,-10) [circle,fill,inner sep=1.5pt]{};
\node at (20,-10)[circle,fill,inner sep=1.5pt]{};

\node (a) at (-1.5,-9) {$y_1$};
\node (b) at (21.5,-9) {$y_2$};
\node (c) at (21,-3) {\tiny $N_{G_2'}(y_2) $};
\node (d) at (21,6) {\tiny $N_{G_2'}(x_2) $};
\node (e) at (-12,0) {$\N(G_1')$};
\node (f) at (32,0) {$\N(G_2')$};
\node (g) at (-1.5,9) {$x_1$};
\node (h) at (21.5,-6) {$x_2$};
\node (i) at (-1,3) {\tiny $N_{G_1'}(y_1) $};
\node (j) at (-1,-6) {\tiny $N_{G_1'}(x_1)$};
\node (k) at (9,0) {\tiny $N_{G_1'}(y_1) \ast N_{G_2'}(y_2) $};
\end{tikzpicture}
\end{center}
\caption{Building $\N(G_1\haj G_2)$ from $\N(G_1)$ and $\N(G_2)$ as in the proof of Theorem~\ref{thm:hajostop}.}
\label{fig:hajostop}
\end{figure}

%%%%%%%%%%%%%%%%%%%%%%

%\vspace{5mm}

 First, consider the effect of identifying $y_1$ and $y_2$. Since $y_1 \in \N(G_1')$ and $y_2 \in \N(G_2')$ are elements in separate connected components, merging the two vertices in the disjoint union $\N(G_1')\uplus \N(G_2')$ is homotopy equivalent to attaching a $1$-cell between $y_1$ and $y_2$, as shown in Figure~\ref{fig:hajostop}. 
In addition, merging $y_1$ and $y_2$ has the effect of joining the faces $N_{G_1'}(y_1)$ and $N_{G_2'}(y_2)$ in $\N(G_1')\uplus \N(G_2')$, which adds $N_{G_1'}(y_1) * N_{G_2'}(y_2)$ to $\N(G_1')\uplus \N(G_2')$. 
Since $N_{G_1'}(y_1) \cap N_{G_2'}(y_2) = \emptyset$, this join connects a face of $\N(G_1')$ to a face of $\N(G_2')$ through a contractible join which is homotopy equivalent to attaching a $1$-cell between  the contractions of $N_{G_1'}(y_1)$ and $N_{G_2'}(y_2)$, respectively. 

 Now, let's consider the effect of adding edge $(x_1,x_2)$. Notice $N_{G_1'}(x_1) \in \mathcal{N}(G_1')$ and $x_2 \in \mathcal{N}(G_2')$, so adding the edge $(x_1,x_2)$ has the effect of coning $N_{G_1'}(x_1)$ over $x_2$. 
Since $N_{G_1'}(x_1)$ is contractible, this is homotopy equivalent to attaching a $1$-cell between $x_2$ and the contraction of $N_{G_1'}(x_1)$. The analysis is similar for $N_{G_2'}(x_2)$ and $x_1$, as shown in Figure~\ref{fig:hajostop}.

We turn our attention to the number of connected components for $\mathcal{N}(G_1')$ and $\mathcal{N}(G_2')$. Suppose $\mathcal{N}(G_2')$ is 0-connected (i.e. $\chi(G_2') \geq 3$). To obtain a copy of $S^1$, at least one of the connected components of $\mathcal{N}(G_1)$ must contain at least two elements from the set $\{\{x_1\}, \{y_1\}, N_{G_1'}(x_1), N_{G_1'}(y_1) \}$, which follows from Lemma~\ref{lem:bipartite}  when $\chi(G_1') \geq 2$. So it suffices to consider  $\chi(G_1')=\chi(G_2') =2.$

 Suppose both $\N(G_1')$ and $\N(G_2')$ have two connected components, respectively. Let $\N(G_1') = A \uplus B$ and $\N(G_2') = C \uplus D$.  From Lemma~ \ref{lem:bipartite} we can assume $x_1, y_1 \in A, N_{G_1'}(x_1), N_{G_1'}(y_1) \in B, x_2, y_2 \in C,$ and $N_{G_2'}(x_2), N_{G_2'}(y_2) \in D$. It follows that, up to homotopy equivalence, the Haj\'os construction attaches 1 -simplices between $A$ and $C$, $A$ and $D$, $B$ and $C$, and $B$ and $D$. 
%It is possible for the faces formed by $N_{G_1}(y_1)\smallsetminus \{x_1\}$ and $N_{G_1}(x_1)\smallsetminus \{y_1\}$ to have a nontrivial intersection, in which case the union of the cone over the face formed by $N_{G_1}(x_1)\smallsetminus \{y_1\}$ and the face formed by $N_{G_1}(y_1)\smallsetminus \{x_1\}$ is still contractible.The situation is identical for $G_2$.

Thus, we have found that $\N(G_1\haj G_2)$ is homotopy equivalent to the space obtained by starting with $\N(G_1')\uplus \N(G_2')$, contracting each of the faces formed by $N_{G_1'}(x_1)$, $N_{G_2'}(x_2)$, $N_{G_1'}(y_1)$, and $N_{G_2'}(y_2)$ to a point and then attaching $1$-cells as indicated above.
Note that if any of these faces intersect, then contracting their union leads to the same structure.
Attaching the $1$-cells between $\N(G_1')\uplus \N(G_2')$ as indicated creates a wedge summand of at least one copy of $S^1$, for both (1) and (2).
\end{proof}

%\jnote{We can eliminate ``without a bridge'' (thought I put it in for now) condition from Cor 3.5 and 3.8 for the following reason: Suppose that $\chi(G_1)$ and $\chi(G_2) \geq 4$. If only one has a bridge it would fall into (2) of Theorem 3.4 meaning we would need to end with vertex identification to get that connectivity up. If both are bridges then we would end with a disconnected graph and therefore a disconnected neighborhood complex. Again we would need to end up with a vertex identification. If neither is a bridge then we got ourselves a case (1). In all cases, we need to end with a vertex identification. }

\begin{corollary}\label{cor:hajostop}
If $G$ is a graph such that $G$ is $j$-constructible, $j \geq 4$, and $\N(G)$ is $i$-connected with $i>0$, then any Haj\'os construction of $G$ must end with a vertex identification.
\end{corollary}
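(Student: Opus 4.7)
The plan is to argue by contradiction: suppose some Haj\'os $j$-construction of $G$ ends with a Haj\'os merge, so $G = G_1 \haj G_2$ with chosen edges $(x_1,y_1) \in E(G_1)$ and $(x_2,y_2) \in E(G_2)$. Because every graph appearing in a $j$-construction is itself $j$-constructible, I get $\chi(G_1), \chi(G_2) \geq j \geq 4$ for free. The strategy is then to apply Theorem~\ref{thm:hajostop} to conclude that $\N(G)$ has an $S^1$ wedge summand; this forces $H_1(\N(G)) \neq 0$, contradicting $1$-connectedness of $\N(G)$.

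I would split the argument according to how many of the two selected edges are bridges. If neither $(x_1,y_1)$ nor $(x_2,y_2)$ is a bridge, hypothesis (1) of Theorem~\ref{thm:hajostop} applies immediately. If exactly one of them, say $(x_1,y_1)$, is a bridge, then the bound $\chi(G_2) \geq 4$ lets me invoke hypothesis (2) with $G_1$ and $G_2$ in the correct roles (relabeling if needed). In both of these cases the theorem yields an $S^1$ wedge summand, so $\N(G)$ is not simply connected, giving the desired contradiction.

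The remaining case, in which both merge edges are bridges, is not covered by Theorem~\ref{thm:hajostop}, so I would handle it directly. Writing $G_i' := G_i \setminus (x_i,y_i) = A_i \uplus B_i$ with $x_i \in A_i$ and $y_i \in B_i$, the Haj\'os merge adds the single edge $(x_1,x_2)$, joining $A_1$ to $A_2$, and identifies $y_1$ with $y_2$, gluing $B_1$ to $B_2$; crucially, no edges link $A_1 \cup A_2$ to $B_1 \cup B_2$. Thus $G$ itself is disconnected, so by Proposition~\ref{prop:connected} the complex $\N(G)$ fails even to be path-connected, which is already a violation of $1$-connectedness.

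The main obstacle, as I see it, is precisely this double-bridge case: the hypotheses of Theorem~\ref{thm:hajostop} were written to sidestep it, so the corollary requires the separate combinatorial observation that two simultaneous bridges force the Haj\'os merge to produce a disconnected graph. Once that is in hand, each of the three cases cleanly contradicts the hypothesis that $\N(G)$ is $i$-connected for some $i > 0$, completing the argument.
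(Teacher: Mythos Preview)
Your argument is correct and follows essentially the same route as the paper's own proof: assume the last step is a Haj\'os merge, use $\chi(G_1),\chi(G_2)\geq 4$, split on how many of the merge edges are bridges, invoke Theorem~\ref{thm:hajostop} when at most one is a bridge, and observe that $G$ is disconnected (hence $\N(G)$ is not even path-connected) when both are bridges. Your write-up is in fact slightly more explicit than the paper's, which simply asserts the disconnection in the double-bridge case without spelling out the component analysis you give.
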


\begin{proof}
  Let $G_1$ and $G_2$ be two graphs used for a Haj\'os merge in a Haj\'os construction of $G$ such that $\chi(G_1), \chi(G_2) \geq 4$.
  Let $(x_1,y_1) \in E(G_1)$ and $(x_2, y_2) \in E(G_2)$ be the edges used in the Haj\'os merge.
  If at most one of the edges is a bridge, Theorem~\ref{thm:hajostop} shows that $G_1 \Delta_H G_2$ is not 1-connected.
  If both $(x_1,y_1)$ and $(x_2,y_2)$ are bridges, $G_1 \Delta_H G_2$ is disconnected and hence $\mathcal{N}(G_1 \Delta_H G_2)$ is also disconnected.
  In either situation, we contradict our assumption that $\mathcal{N}(G)$ is $i$-connected with $i >0$, thus any Haj\'os constuction of $G$ must end with a vertex identification. 
\end{proof}

Corollary~\ref{cor:hajostop} provides a major restriction on Haj\'os constructions of graphs having highly-connected neighborhood complexes.
However, we can provide even stronger restrictions. While vertex identifications are less well-behaved in general than Haj\'os merges, they have a predictable topological effect when the identified vertices are far apart in the graph.
Suppose $v$ and $w$ are vertices in a graph $G$.
Recall that $\dd(v,w)$ denotes the minimum number of edges in a path from $v$ to $w$, and $\dd(v,w)=\infty$ if $v$ and $w$ are not in the same component of $G$.

\begin{lemma} \label{lem:distancefive}
Let $G$ be a graph with vertices $v$ and $w$ such that $\dd(v,w) \geq 5$ and $G' := \vid(G,[v,w])$ with the resulting identified vertex denoted $vw$. 
If $\sigma \subseteq N_{G'}(vw)$ with $\sigma \cap N_G(v) \neq \emptyset$ and  $\sigma \cap N_G(w) \neq \emptyset$, then $\sigma \notin \N(G)$. 
\end{lemma}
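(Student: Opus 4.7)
The plan is to proceed by contradiction, using the fact that $\N(G)$ has facets $\{N_G(u) : u \in V(G)\}$: a set $\sigma$ lies in $\N(G)$ precisely when there exists some $u \in V(G)$ with $\sigma \subseteq N_G(u)$. So the contradiction to derive is that such a $u$ would create a short walk between $v$ and $w$ in $G$.

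First I would record the basic structural consequences of $\dd(v,w) \geq 5$. In $G'$, the vertex $vw$ has neighborhood $N_{G'}(vw) = N_G(v) \cup N_G(w)$, and since $\dd(v,w) \geq 5 > 2$ in $G$, the sets $N_G(v)$ and $N_G(w)$ are disjoint (a common neighbor would yield $\dd(v,w) \leq 2$). In particular, the hypothesis that $\sigma$ meets both $N_G(v)$ and $N_G(w)$ picks out genuinely distinct elements on the two sides.

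Next I would assume for contradiction that $\sigma \in \N(G)$, so there is some $u \in V(G)$ with $\sigma \subseteq N_G(u)$. Choose $a \in \sigma \cap N_G(v)$ and $b \in \sigma \cap N_G(w)$; by the disjointness above, $a \neq b$. Then $v \sim a$, $a \sim u$, $u \sim b$, and $b \sim w$ are edges in $G$, giving the walk $v, a, u, b, w$ of length $4$ from $v$ to $w$. Hence $\dd(v,w) \leq 4$, contradicting $\dd(v,w) \geq 5$.

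There is essentially no hard step here; the only thing to double-check is that the candidate witness $u$ cannot collapse the walk trivially (e.g.\ $u = v$ would force $v$ and $w$ to share the neighbor $b$, again contradicting $\dd(v,w) \geq 5$), and that $a \neq b$ so that the walk uses four distinct edges — both of which follow from the same distance hypothesis. The takeaway from the argument is that the separation $\dd(v,w)\geq 5$ is exactly the threshold needed to rule out any common ``target vertex'' $u$ whose neighborhood could absorb a face spanning both sides of the identification.
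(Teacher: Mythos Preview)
Your proof is correct and follows essentially the same approach as the paper: both assume $\sigma\in\N(G)$, obtain a common neighbor $u$ (the paper calls it $x$) with $\sigma\subseteq N_G(u)$, pick elements in $\sigma\cap N_G(v)$ and $\sigma\cap N_G(w)$, and deduce a walk of length $4$ from $v$ to $w$, contradicting $\dd(v,w)\geq 5$. Your write-up is in fact a bit cleaner in explicitly noting the disjointness $N_G(v)\cap N_G(w)=\emptyset$ up front and handling the degenerate cases.
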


\begin{proof}
Suppose that $\sigma \in \N(G)$. 
Let $N_G(v) \cap \sigma := \{v_1,...v_m\}$ and $N_G(w) \cap \sigma := \{w_1,.., w_\ell\}$. 
Since $N_G(v) \cap N_G(w) = \emptyset$, there exists a vertex $x$ such that $\{v_1,..., v_m,w_1,...,w_\ell\} \cup \sigma \subseteq N(x)$. 
In particular, there exists a vertex $v_i \in \{v_1,...,v_m\}$ such that $\dd(v_i, w_j) = 2$ for some $w_j \in \{w_1,...,w_\ell\}$. 
This implies $\dd(v,w) \leq \dd(v_i,w_j) + \dd(v_j,v) + \dd(w_j, w) = 2 + 1 + 1 = 4$, a contradiction to $\dd(v,w) \geq 5$. 
\end{proof}

\begin{theorem}\label{thm:vid5top}
Let $G$ be a connected graph that is not bipartite with $v,w \in V(G)$ such that $\dd(v,w) \geq 5$ and $G' := \vid(G,[v,w])$ with the resulting identified vertex denoted $vw$. 
Then $\N(G') \simeq \N(G) \bigvee S^1 \bigvee S^1$. 
\end{theorem}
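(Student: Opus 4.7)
The plan is to describe $\N(G')$ explicitly as a pushout built from $\N(G)$ and the simplicial join of the neighborhoods $N_G(v)$ and $N_G(w)$, and then compute its homotopy type in two stages, each of which contributes one $S^1$ summand.

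Write $A := N_G(v)$ and $B := N_G(w)$, viewed as facets of $\N(G)$. Since $\dd(v,w) \geq 5 \geq 3$, the vertices $v$ and $w$ have no common neighbor, so $A \cap B = \emptyset$ and $v,w$ never lie in a common face of $\N(G)$. Hence the quotient $\N_0 := \N(G)/(v \sim w)$ is an honest simplicial complex. First I would classify the simplices of $\N(G')$: for $\tau \in \N(G')$ with common neighbor $y \in V(G')$, either $y \ne vw$, or $y = vw$ with $\tau \subseteq A$ or $\tau \subseteq B$, in which case $\tau$ is the image under the quotient map of a face of $\N(G)$ and so lies in $\N_0$. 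Otherwise $\tau \subseteq A \cup B$ meets both $A$ and $B$, and by Lemma~\ref{lem:distancefive} these are exactly the faces of $\N(G')$ not already in $\N_0$. Combined with $A$ and $B$, these new faces form precisely the simplex $A * B$ on $A \sqcup B$.

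Next I would package this as the pushout
\[
\N(G') \;=\; \N_0 \,\cup_{A \,\sqcup\, B}\, (A * B),
\]
where the intersection of $\N_0$ with $A * B$ inside $\N(G')$ is the disjoint union $A \sqcup B$. All three of $A$, $B$, and $A * B$ are simplices and therefore contractible, and the inclusion $A \sqcup B \hookrightarrow A * B$ is a cofibration. Replacing each contractible piece by a point, the homotopy pushout reduces to identifying two points in $\N_0$. The standard fact that identifying two distinct points in a path-connected CW complex $X$ produces $X \vee S^1$ then yields $\N(G') \simeq \N_0 \vee S^1$, provided $\N_0$ is path-connected.

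Finally I would apply the same two-point identification reasoning one level down. Since $G$ is connected and not bipartite, Proposition~\ref{prop:connected} gives that $\N(G)$ is path-connected, hence so is the quotient $\N_0$. The same standard fact applied to $\N(G)$ with the points $v$ and $w$ identified yields $\N_0 \simeq \N(G) \vee S^1$, and combining the two equivalences gives $\N(G') \simeq \N(G) \vee S^1 \vee S^1$. The main obstacle I expect is the bookkeeping in the first step: confirming that every simplex of $\N(G')$ falls into exactly one of the two types described, and that the intersection of $\N_0$ with $A*B$ inside $\N(G')$ is really $A \sqcup B$ rather than something larger. Once the pushout is in hand, the remaining manipulation is formal, relying only on contractibility of the pieces and the path-connectedness guaranteed by Proposition~\ref{prop:connected}.
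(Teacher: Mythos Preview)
Your argument is correct and follows essentially the same approach as the paper's proof. Both arguments use Lemma~\ref{lem:distancefive} to ensure that the only faces of $N_G(v)*N_G(w)$ already present in $\N(G)$ are those lying entirely in $N_G(v)$ or entirely in $N_G(w)$, and both use Proposition~\ref{prop:connected} to guarantee path-connectedness so that each of the two identifications contributes an $S^1$ wedge summand; you phrase the construction as an explicit pushout $\N_0\cup_{A\sqcup B}(A*B)$ followed by a two-point identification, whereas the paper describes the same operations as ``attaching a $1$-cell'' between the contracted facets and between $\{v\}$ and $\{w\}$.
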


\begin{proof}
By Proposition~\ref{prop:connected}, $\N(G)$ consists of one path-connected component, so the simplices $N_G(v), N_G(w), \{v\},$ and $\{w\}$ are all in the same path-connected component. 
Further, since $\dd(v,w) \geq 5$, we have that the sets $N_G(v), N_G(w),  \{v\}, \textrm{and} \{w\}$ are pairwise disjoint.
The operation of identifying $v$ and $w$ in $G$ adds to $\N(G)$ the join of $N_G(v) * N_G(w)$ and identifies $v$ and $w$ in $\N(G)$.
By Lemma~\ref{lem:distancefive}, no faces of $N_G(v) * N_G(w)$ other than $N_G(v)$ and $N_G(w)$ are present in $\N(G)$. 
Therefore, the join is homotopy equivalent to attaching a $1$-cell between the contractions of the facets given by $N_G(v)$ and $N_G(w)$, respectively. 
In addition, the identification of the simplices $\{v\}$ and $\{w\}$ is homotopy equivalent to attaching a $1$-cell between $\{v\}$ and $\{w\}$, as shown in Figure~\ref{fig:vid5top}.
Hence, $\N(G') \simeq \N(G) \bigvee S^1 \bigvee S^1$. 

\end{proof}

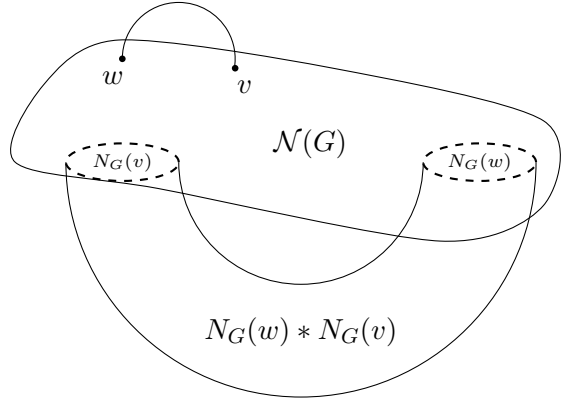
\begin{figure}
\begin{center}
\begin{tikzpicture}[scale = 1.25, every node/.style={}]
\draw [] plot [smooth cycle] %
    coordinates {(-1.14,-1)(-0.84, -.18) (-0.04, 0.3) (2.24, 0) %
    (4.48, -0.56) (4.48, -1.46) (3.38,-1.84)(0.38, -1.28)};

\fill[] (0,0.1) circle (1pt); 
\fill[] (1.2, 0) circle (1pt); 

\draw [thick, dashed] (3.8,-1) ellipse (6mm and 2mm);
%\draw[thick,dashed] (3.8,-1) circle (4mm);
\draw[thick,dashed] (0,-1) ellipse (6mm and 2mm);

\node at (-0.1, -0.1) {$w$};
\node at (1.3, -0.2) {$v$}; 

\node at (2, -.8) {$\N(G)$};

\node at (3.8, -1) {\tiny $N_G(w)$};
\node at (0, -1) {\tiny $N_G(v)$};

\draw[black] (0, 0.1) arc (180: -10:0.6);
\draw[black] (4.4, -1) arc(0: -180: 2.5); 
\draw[black] (3.2, -1) arc (0: -180: 1.3); 

\node at (1.9, -2.8) {$N_G(w) * N_G(v)$};
\end{tikzpicture}
\end{center}
\caption{Through an identification of $v$ and $w$ in $G$ where $\dd(v,w) \geq 5$, we add a $1$-cell $\{v,w\}$ and the join $N(w) *N(v)$ to $\N(G)$ to obtain a space homotopy equivalent to $\N(\vid(G,[v,w]))$.}
\label{fig:vid5top}
\end{figure}

\begin{corollary}\label{cor:vid5top}
If $G$ is a $j$-chromatic graph with $j \geq 4$ such that $\N(G)$ is $i$-connected with $i>0$, then any Haj\'os construction of $G$ must end with a vertex identification $\vid(H,[v,w])$ where $v$ and $w$ satisfy $\dd(v,w)\leq 4$.
\end{corollary}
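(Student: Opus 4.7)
The plan is to combine Corollary~\ref{cor:hajostop} with Theorem~\ref{thm:vid5top} via a contrapositive argument. First, I would apply Corollary~\ref{cor:hajostop}: since $j \geq 4$ and $\N(G)$ is $i$-connected with $i \geq 1$, any Haj\'os construction of $G$ must end in a vertex identification, not a Haj\'os merge. The final operation $\vid(H, L)$ can be decomposed as a sequence of single-pair identifications, so it suffices to control the very last single-pair step $\vid(H', [v, w]) = G$, where $v, w \in V(H')$ are nonadjacent.

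Next, I would argue by contradiction, supposing $\dd_{H'}(v, w) \geq 5$. Because every graph appearing in a Haj\'os $j$-construction has chromatic number at least $j$ (the chromatic number is nondecreasing under both Haj\'os merges and vertex identifications, with seeds $K_j$), we have $\chi(H') \geq j \geq 4$, so $H'$ is not bipartite. In the main case where $H'$ is also connected, Theorem~\ref{thm:vid5top} applies directly and yields $\N(G) \simeq \N(H') \vee S^1 \vee S^1$. Then $\pi_1(\N(G))$ is nontrivial, contradicting the $1$-connectivity of $\N(G)$ guaranteed by $i \geq 1$.

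The main obstacle I anticipate is the remaining case, in which $H'$ is disconnected. Since $G$ is connected (by Proposition~\ref{prop:connected}, because $\N(G)$ is path-connected), the pair $\{v, w\}$ must straddle two distinct components of $H'$, so $\dd_{H'}(v, w) = \infty \geq 5$. I would handle this by adapting the proof of Theorem~\ref{thm:vid5top}: the identification adds the contractible join $N_{H'}(v) * N_{H'}(w)$ (all of whose mixed-component faces are new, by an analogue of Lemma~\ref{lem:distancefive} that is trivialized when the neighborhoods lie in disjoint components) and also identifies $\{v\}$ with $\{w\}$. Contracting $N_{H'}(v)$ and $N_{H'}(w)$ to points, these two contributions attach two $1$-cells between previously disjoint regions of $\N(H')$, producing at least one $S^1$ wedge summand in $\N(G)$ and again contradicting $1$-connectivity. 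Hence $\dd_{H'}(v,w) \leq 4$, as required.
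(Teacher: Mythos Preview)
Your approach matches the paper's: the corollary is stated there without proof, as an immediate consequence of Corollary~\ref{cor:hajostop} and Theorem~\ref{thm:vid5top}, and you correctly combine those two results in the contrapositive form.

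You actually go further than the paper by explicitly flagging and attempting to treat the case where the penultimate graph $H'$ is disconnected. Your treatment of that edge case, however, has a gap. You assert that the two attaching $1$-cells (one from identifying $\{v\}$ with $\{w\}$, one from the join $N_{H'}(v)\ast N_{H'}(w)$) connect ``previously disjoint regions'' and hence create an $S^1$ summand. But this relies on $\{v\}$ and $N_{H'}(v)$ lying in the \emph{same} connected component of $\N(H')$ (and likewise for $w$). If the component $C_1$ of $H'$ containing $v$ happens to be bipartite, Proposition~\ref{prop:connected} shows that $\N(C_1)$ has two components with $v$ and $N_{H'}(v)$ separated; in that situation your two $1$-cells may simply string together three components of $\N(H')$ without forming any loop, so no $S^1$ is forced. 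The paper does not address this subtlety either---it silently assumes the connected non-bipartite hypothesis of Theorem~\ref{thm:vid5top} is available---so your proof is at least as complete as the paper's, but the disconnected case would require an additional argument (for instance, ruling out bipartite components among intermediate graphs in a Haj\'os $j$-construction with $j\geq 4$, or showing directly that $\N(G)$ cannot be $1$-connected in that scenario).
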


%%%%%%%%%%%%%%%%%%%%%%%%%%%%%%%%%%%%%%%%%%%%%%%%%%%%%%%%%%%%%%%%%%%%%%%%%%%%%%%%

\section{Identifications of Vertices at Short Distances}\label{sec:shortdistance}

Corollary~\ref{cor:vid5top} demonstrates the importance in Haj\'os-type constructions of identifications of pairs of vertices at distance strictly less than five.
This importance is further emphasized when comparing Corollary~\ref{cor:vid5top} to Theorem~\ref{thm:kahle} regarding the behavior of neighborhood complexes of random graphs.
Since the typical Haj\'os merge or vertex identification at distance strictly greater than four results in a wedge summand of $S^1$ for $\N(G)$, in order to produce a Haj\'os-type construction of $G$ where $\N(G)$ is $i$-connected for $i>0$, it must be the case that vertex identifications at short distances remove these wedge summands, and thus reduce the first homology group of $\N(G)$.
Therefore, we are motivated in this section to consider the effect of such vertex identifications on these wedge summands and on the first Betti number of $\N(G)$, i.e. the rank of the first homology group of $\N(G)$.
We begin with the following lemma.

\begin{lemma}\label{lem:path}
If $G$ is a graph with two vertices $v$ and $w$ such that $p$ is a path from $v$ to $w$ of length four, then $N_G(v)$ and $N_G(w)$ are connected via an edge between two of their respective vertices.
\end{lemma}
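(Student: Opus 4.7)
The plan is to read the desired edge of $\N(G)$ directly off the given path. Write $p$ as $v = u_0, u_1, u_2, u_3, u_4 = w$, so that $u_i u_{i+1} \in E(G)$ for $0 \le i \le 3$. From the first and last edges of $p$ we immediately get $u_1 \in N_G(v)$ and $u_3 \in N_G(w)$, so the pair $\{u_1, u_3\}$ is the natural candidate for the edge connecting the two facets $N_G(v)$ and $N_G(w)$.

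It then remains to verify that $\{u_1, u_3\}$ is actually a $1$-simplex of $\N(G)$, i.e., that $u_1$ and $u_3$ share a common neighbor in $G$. The middle vertex $u_2$ serves that role: both edges $u_1 u_2$ and $u_2 u_3$ lie in $E(G)$, so $\{u_1, u_3\} \subseteq N_G(u_2)$. Hence $\{u_1, u_3\}$ is a face of $\N(G)$, giving an edge whose endpoints lie in $N_G(v)$ and $N_G(w)$, respectively.

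I do not expect any real obstacle. The whole argument is essentially to pick the second and fourth vertices of the path as the endpoints of the sought-after edge and to invoke the third vertex as their common $G$-neighbor, which by definition of $\N(G)$ certifies that they span a $1$-simplex. The only things worth flagging in writing it up are the conventions that a path of length four has five vertices, and that the witnessing vertex $u_2$ need not itself lie in $N_G(v)$ or $N_G(w)$; it only needs to be adjacent to $u_1$ and $u_3$ in $G$.
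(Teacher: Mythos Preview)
Your proof is correct and follows essentially the same approach as the paper: both arguments write the path as $v,x_1,x_2,x_3,w$, observe that $x_1\in N_G(v)$ and $x_3\in N_G(w)$, and use the middle vertex $x_2$ as the common neighbor certifying that $\{x_1,x_3\}$ is an edge of $\N(G)$.
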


\begin{proof}
  Let $p = (v, x_1, x_2, x_3, w)$.
  Then, $x_1 \in N_G(v)$ and $x_3 \in N_G(w)$.
  In addition, $x_1, x_3 \in N_G(x_2)$.
  Therefore,  $N_G(v)$ and $N_G(w)$ are connected through the edge $(x_1, x_3)$ in $\N(G)$.  
\end{proof}

\begin{theorem}\label{thm:d4rankdrop}
  Let $G$ be a graph with vertices $v$ and $w$ such that $\dd(v,w)\geq 3$ and there exists a path of length four from $v$ to $w$. Let $G' := \vid(G, [v,w])$. Then
  \[
    \mathrm{rank}(\widetilde{H}_1(\mathcal{N}(G'))) \leq \mathrm{rank}(\widetilde{H}_1(\mathcal{N}(G)))\, .
  \]
\end{theorem}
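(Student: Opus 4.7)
The plan is to compare $\N(G')$ to $\N(G)$ via a two-step chain, each controlled by a different feature of the length-four path $p=(v,x_1,x_2,x_3,w)$. The graph quotient $G\twoheadrightarrow G'$ induces a simplicial map $q\colon\N(G)\to\N(G')$; let $A=q(\N(G))$, and let $\Delta$ be the full simplex on vertex set $N_G(v)\cup N_G(w)$, which is the new facet $N_{G'}(vw)$. Then $\N(G')=A\cup\Delta$, and $A\cap\Delta$ is the induced subcomplex $K$ of $\N(G)$ on $N_G(v)\cup N_G(w)$, since $q$ is the identity on any simplex avoiding $\{v,w\}$. Because $\Delta$ is contractible, $\N(G')\simeq A/K$.

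For the first step, Lemma~\ref{lem:path} places the edge $\{x_1,x_3\}$ into $K$ and joins the two simplices $N_G(v)$ and $N_G(w)$, making $K$ path-connected. The long exact sequence of the pair $(A,K)$ therefore ends in a surjection $\widetilde H_1(A)\twoheadrightarrow\widetilde H_1(A/K)\cong\widetilde H_1(\N(G'))$, yielding $\rank\widetilde H_1(\N(G'))\le\rank\widetilde H_1(A)$.

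For the second step, I will show that $q$ induces a surjection on $\widetilde H_1$. Since $\dd(v,w)\geq 3$, no simplex of $\N(G)$ contains both $v$ and $w$, so $q$ is dimension-preserving and surjective on simplices, giving a short exact sequence $0\to\ker q_*\to C_*(\N(G))\to C_*(A)\to 0$ with associated long exact sequence containing $H_1(\N(G))\xrightarrow{q_*}H_1(A)\xrightarrow{\delta}H_0(\ker q_*)$. It suffices to verify $H_0(\ker q_*)=0$: the group $(\ker q_*)_0$ is cyclic, generated by $[v]-[w]$, and a typical generator of $(\ker q_*)_1$ has the form $\{u,v\}-\{u,w\}$ with boundary $[v]-[w]$. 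The vertex $x_2$ furnishes such a generator, because the sub-paths $(v,x_1,x_2)$ and $(w,x_3,x_2)$ show $\{x_2,v\}\subseteq N_G(x_1)$ and $\{x_2,w\}\subseteq N_G(x_3)$, so both edges belong to $\N(G)$. Hence $\partial$ is surjective onto $(\ker q_*)_0$, so $H_0(\ker q_*)=0$, the connecting map $\delta=0$, and $q_*$ is surjective on $H_1$. The main subtlety I expect is that $A$ is the simplicial image of $q$, not the topological quotient of $\N(G)$ by $v\sim w$; the simplex mergers such as $\{x_2,v\}\leftrightarrow\{x_2,w\}$ encoded in $\ker q_*$ are precisely what prevent $\widetilde H_1$ from growing under the quotient, and isolating them via the chain-level SES is the key technical step.
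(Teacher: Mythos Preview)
Your proof is correct and proceeds by a genuinely different decomposition than the paper's. Both arguments factor the passage $\N(G)\rightsquigarrow\N(G')$ into two steps, each controlled by one feature of the length-four path, but they interpolate through different intermediate objects. The paper first \emph{adds} the new facet, forming $\N(G)\cup\tau$, and then \emph{identifies} $v$ with $w$ by collapsing $\cstar(v)\cup\cstar(w)$; the two inequalities come from Mayer--Vietoris (using that $\N(G)\cap\tau$ is connected via the edge $\{x_1,x_3\}$) and from the long exact sequence of the pair $(\N(G)\cup\tau,\cstar(v)\cup\cstar(w))$ (using that the star union is connected through $x_2$). You reverse the order: first identify, obtaining $A=q(\N(G))$, and then observe $\N(G')=A\cup\Delta$; your two inequalities come from the long exact sequence of $(A,K)$ with $K=A\cap\Delta$ (the same induced subcomplex the paper calls $\N(G)\cap\tau$, connected via $\{x_1,x_3\}$) and from the chain-level short exact sequence for the surjection $C_*(\N(G))\twoheadrightarrow C_*(A)$, where $H_0(\ker q_\#)=0$ is witnessed by the element $[x_2,v]-[x_2,w]$.

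What your route buys is that the identification step is handled purely algebraically: you never need the homeomorphism $\N(G')/\cstar(vw)\cong(\N(G)\cup\tau)/(\cstar(v)\cup\cstar(w))$ that the paper invokes, and you sidestep any ambiguity about open versus closed stars. The paper's route stays at the level of spaces throughout and makes the geometric picture of ``add a cell, then collapse two cones'' more visible. Your remark that $A$ is the simplicial image rather than the topological quotient $\N(G)/(v\sim w)$ is exactly the right point: the edges $\{x_2,v\}$ and $\{x_2,w\}$ merge in $A$ but would remain parallel in the naive quotient, and it is precisely this merger, captured in $\ker q_\#$, that prevents $\widetilde H_1$ from growing.
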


\begin{proof}
  Recall that the \emph{link} of a vertex $v$ in a simplicial complex $X$ is $\link_X(v):=\{\sigma\in X:v\notin \sigma, \{v\}\cup \sigma \in X\}$.
  Further, the \emph{star} of a vertex $v$ in $X$ is $\cstar_X(v):=\{\sigma\in X:v\in \sigma\}$.
  By Lemma~\ref{lem:path}, $N_G(v)$ and $N_G(w)$ are connected via an edge in $\N(G)$, though they are disjoint faces in $\N(G)$.
  Since $\dd(v,w)\geq 3$, both $v$ and $w$ are disjoint from these neighborhoods.

  Through the identification of $v$ and $w$, there is a two-step alteration of the neighborhood complex $\N(G)$ that produces $\N(G')$:
\begin{enumerate}
\item $N_{G}(v)$ and $N_{G}(w)$ are joined to become the face $\tau$ with vertex set $N_G(v) \cup N_G(w)$; 
\item any faces containing $v$ or $w$ are deleted, while a new vertex $vw$ is created with
  \[
    \link_{\N(G')}(vw):=\link_{\N(G)}(v)\cup\link_{\N(G)}(w) \, .
  \]
\end{enumerate}

We first address step (1): Consider the complex $\mathcal{N}(G) \cup \tau$.
Since $\tau$ is contractible and thus has zero homology, by the Mayer-Vietoris sequence we obtain: 
\[
  \cdots\rightarrow  \widetilde{H_1}(\mathcal{N}(G) \cap \tau) \rightarrow \widetilde{H}_1(\mathcal{N}(G)) \rightarrow \widetilde{H_1}(\mathcal{N}(G) \cup \tau) \rightarrow \widetilde{H_0}(\mathcal{N}(G) \cap \tau) \rightarrow \cdots
\]
Combining the fact that $\mathcal{N}(G) \cap \tau$ is a subcomplex of $N_G(v)\ast N_G(w)$ with Lemma~\ref{lem:path}, it follows that $\mathcal{N}(G) \cap \tau$ is connected.
Hence, $\widetilde{H_0}(\mathcal{N}(G) \cap \tau) = 0$.
So, $\widetilde{H_1}(\mathcal{N}(G))$ surjects onto $\widetilde{H_1}(\mathcal{N}(G) \cup \tau)$, implying that
\[
  \mathrm{rank}(\widetilde{H_1}(\mathcal{N}(G))) \geq \mathrm{rank}(\widetilde{H_1}(\mathcal{N}(G) \cup \tau)) \, .
\]

We next address step (2), in which we go from $\mathcal{N}(G) \cup \tau$ to $\mathcal{N}(G')$:
Since $\cstar_{\N(G')}(vw)$ is a cone, it is contractible, and thus
\begin{align*}
  \N(G')=\N(\vid(G,[v,w]))& \simeq \N(\vid(G,[v,w]))/\cstar_{\N(G')}(vw) \\
  & \cong (\N(G)\cup \tau)/(\cstar_{\N(G)\cup \tau}(v)\cup \cstar_{\N(G)\cup \tau}(w)) 
\end{align*}
where the homeomorphism of the latter two quotients follows from the fact that $\link_{\N(G')}(vw):=\link_{\N(G)}(v)\cup\link_{\N(G)}(w)$ and the links of $v$ in both $\N(G)$ and $\N(G)\cup \tau$ are the same, and similarly for $w$.
Hence, we have
\[
\widetilde{H}_1(\N(G'))\cong \widetilde{H}_1((\N(G)\cup \tau)/(\cstar_{\N(G)\cup \tau}(v)\cup \cstar_{\N(G)\cup \tau}(w))) \, .
\]
It follows from the existence of a path of length four from $v$ to $w$ that $\cstar_{\N(G)\cup \tau}(v)\cup \cstar_{\N(G)\cup \tau}(w)$ is connected, and combining this with the long exact sequence for the pair $(\N(G)\cup \tau,\cstar_{\N(G)\cup \tau}(v)\cup \cstar_{\N(G)\cup \tau}(w))$ we obtain
\begin{align*}
  \cdots\rightarrow  &\widetilde{H_1}(\cstar_{\N(G)\cup \tau}(v)\cup \cstar_{\N(G)\cup \tau}(w)) \rightarrow \widetilde{H}_1(\mathcal{N}(G) \cup \tau) \rightarrow \widetilde{H_1}((\mathcal{N}(G) \cup \tau)/(\cstar_{\N(G)\cup \tau}(v)\cup \cstar_{\N(G)\cup \tau}(w))) \\
  & \rightarrow \widetilde{H_0}(\cstar_{\N(G)\cup \tau}(v)\cup \cstar_{\N(G)\cup \tau}(w)) = 0 \, .
\end{align*}

So,  $\widetilde{H_1}(\mathcal{N}(G) \cup \tau)$ surjects onto $\widetilde{H_1}(\mathcal{N}(G'))$, implying
\[
  \mathrm{rank}(\widetilde{H_1}(\mathcal{N}(G))) \geq \mathrm{rank}(\widetilde{H_1}(\mathcal{N}(G) \cup \tau)) \geq \mathrm{rank}(\widetilde{H_1}(\mathcal{N}(G'))) \, .
\]
\end{proof}

\begin{theorem}\label{thm:d2rankdrop}
  Let $G$ be a graph with vertices $v$ and $w$.
  Define $U$ to be the set of common neighbors of $v$ and $w$, $A:=N_G(v)\smallsetminus U$, and $B:=N_G(w)\smallsetminus U$.
  Suppose that
  \begin{enumerate}
  \item $\dd(v,w)=2$,
  \item there is no path of length $3$ from $v$ to $w$ in $G$, and
  \item $\left(\bigcup_{a\in A}N_G(a)\right)\cap \left(\bigcup_{b\in B}N_G(b)\right)=\emptyset$.
  \end{enumerate}
  If $G' := \vid(G, [v,w])$, then
  \[
    \mathrm{rank}(\widetilde{H}_1(\mathcal{N}(G'))) \leq \mathrm{rank}(\widetilde{H}_1(\mathcal{N}(G)))\, .
  \]
\end{theorem}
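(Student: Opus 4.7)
The plan is to adapt the two-step strategy used in the proof of Theorem~\ref{thm:d4rankdrop} to the distance-$2$ setting. Define $\tau$ to be the closed simplex on the vertex set $A \cup U \cup B = N_G(v) \cup N_G(w)$, and use $\N(G) \cup \tau$ as an intermediate complex between $\N(G)$ and $\N(G')$. I will argue separately that neither the passage $\N(G) \rightsquigarrow \N(G) \cup \tau$ (filling in $\tau$) nor the passage $\N(G) \cup \tau \rightsquigarrow \N(G')$ (identifying $v$ and $w$) increases the rank of $\widetilde{H}_1$.

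For the first passage, I would apply the Mayer--Vietoris sequence to the cover $(\N(G), \tau)$ of $\N(G) \cup \tau$. The key structural claim is that $\N(G) \cap \tau$ is the union of the closed simplex on $A \cup U$ and the closed simplex on $B \cup U$, glued along the simplex on $U$. The inclusion $\supseteq$ is immediate from $A \cup U \subseteq N_G(v)$ and $B \cup U \subseteq N_G(w)$. For $\subseteq$, condition~(3) prevents any face of $\N(G) \cap \tau$ from containing both an $a \in A$ and a $b \in B$, since any witness vertex $y$ would lie in $N_G(a) \cap N_G(b)$. Condition~(1) forces $U \neq \emptyset$, so $\N(G) \cap \tau$ is a union of two simplices meeting in a nonempty simplex, hence contractible. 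With $\tau$ also contractible, Mayer--Vietoris yields $\widetilde{H}_1(\N(G)) \cong \widetilde{H}_1(\N(G) \cup \tau)$.

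For the second passage, I would follow the template of Theorem~\ref{thm:d4rankdrop} to establish
\[
\N(G') \simeq \N(G') / \cstar_{\N(G')}(vw) \cong (\N(G) \cup \tau) / \bigl(\cstar_{\N(G) \cup \tau}(v) \cup \cstar_{\N(G) \cup \tau}(w)\bigr),
\]
using contractibility of $\cstar(vw)$ for the homotopy equivalence and the simplicial quotient map sending $v, w \mapsto vw$ for the homeomorphism. Any $u \in U$ satisfies $\{v, w\} \subseteq N_G(u)$, so $\{v, w\}$ is a face of both $\cstar(v)$ and $\cstar(w)$, and the union $\cstar(v) \cup \cstar(w)$ is therefore path-connected. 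The long exact sequence of the pair, combined with $\widetilde{H}_0(\cstar(v) \cup \cstar(w)) = 0$, produces a surjection $\widetilde{H}_1(\N(G) \cup \tau) \twoheadrightarrow \widetilde{H}_1(\N(G'))$. Concatenating the two steps yields the desired rank inequality.

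The main obstacle is justifying the homeomorphism in Step~2. In contrast to the distance-$\geq 3$ setting of Theorem~\ref{thm:d4rankdrop}, here $\{v, w\}$ is itself a face of $\N(G)$ and each of $v, w$ appears in the link of the other, so care is needed in tracking how the identification of $v$ with $w$ interacts with these faces and with the collapsed stars. Condition~(2), which forbids paths of length three from $v$ to $w$, restricts the $G$-edges among $A$, $U$, and $B$ and, together with condition~(3), is expected to enforce enough local structure for the quotient identification to proceed cleanly without producing extraneous cells.
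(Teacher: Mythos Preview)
Your two-step plan is sound and your first step essentially matches the paper's: both use Mayer--Vietoris on the cover $(\N(G),\tau)$, with connectedness of $\N(G)\cap\tau$ coming from $U\neq\emptyset$. Your observation that condition~(3) in fact forces $\N(G)\cap\tau$ to be the union of the two simplices $N_G(v)$ and $N_G(w)$ along the nonempty face $U$, hence contractible, is correct and slightly sharper than what the paper records.

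The second step is where the two arguments diverge. The paper does \emph{not} follow the quotient-and-long-exact-sequence template of Theorem~\ref{thm:d4rankdrop}; instead it shows directly that $\N(G)\cup\tau\simeq\N(G')$ via a discrete Morse matching. Conditions~(2) and~(3) are used to pin down the local structure around $\{v,w\}$: condition~(3) guarantees that the only facets of $\N(G)$ containing the edge $\{v,w\}$ are the $N_G(u)$ for $u\in U$, and condition~(2) forces each such $N_G(u)$ to be contained in $\{v,w\}\cup X$. Thus $\bigcup_{u\in U}N_G(u)$ is a join $\{v,w\}\ast L$ for a subcomplex $L$ supported on $X$, and pairing $\sigma\leftrightarrow\sigma\cup\{w\}$ for cells $\sigma$ containing $v$ but not $w$ collapses this join along the edge $(v,w)$, producing exactly $\N(G')$.

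Your alternative route via the quotient homeomorphism can also be completed, and your ``main obstacle'' is not as severe as you fear. The simplicial map $q:\N(G)\cup\tau\to\N(G')$ sending $v,w\mapsto vw$ is well-defined and surjective; one checks on faces that $q^{-1}\bigl(\cstar_{\N(G')}(vw)\bigr)=\cstar_{\N(G)}(v)\cup\cstar_{\N(G)}(w)$ and that $q$ restricts to the identity on faces disjoint from $\{v,w\}$. Since the surviving faces on both sides are literally the same subsets of $V(G)\setminus\{v,w\}$, the induced map of quotients is a homeomorphism. Interestingly, this verification uses only condition~(1), so your second step is both more elementary (no discrete Morse theory) and, in principle, more general than the paper's; what you lose is the stronger conclusion $\N(G)\cup\tau\simeq\N(G')$ that the paper's matching delivers.
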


\begin{proof}
  Consider the disjoint union
  \[
    V(G) = \{v,w\}\uplus U\uplus A\uplus B\uplus X
  \]
  where $U$ is the set of common neighbors of $v$ and $w$, $A:=N_G(v)\smallsetminus U$, $B:=N_G(w)\smallsetminus U$, and $X$ contains the remaining vertices in $G$.
  Since $G$ contains no path of length $3$ from $v$ to $w$, the following must be true:
  \begin{itemize}
  \item there is no edge between any two elements of $U$;
  \item $v$ is adjacent to every element of $U$ and $A$;
  \item $w$ is adjacent to every element of $U$ and $B$;
  \item there is no edge between $U$ and $A$, and similarly for $U$ and $B$;
  \item there are no restrictions on the structure of edges within $A$, $B$, or $X$ or the edges connecting elements of $X$ with elements of $A$, $U$, or $B$, respectively.
  \end{itemize}

Note that since $v$ and $w$ share a common neighbor, we have $N_G(v)\cap N_G(w)=U\neq \emptyset$.
Through the identification of $v$ and $w$, there are two changes of the neighborhood complex $\N(G)$ that produce $\N(G')$.
First, the face $\tau:= N_G(v) \cup N_G(w)$ is added to the complex --- note that since $N_G(v)\cap N_G(w)\neq \emptyset$ in this case, $\tau$ is not the join of these two simplices.
Also note that $\tau= U\cup A\cup B$, so $\tau \cap X = \emptyset$.
We can make an identical argument to that given in the first step of Theorem~\ref{thm:d4rankdrop} to show that $\mathrm{rank}(\widetilde{H_1}(\mathcal{N}(G))) \geq \mathrm{rank}(\widetilde{H_1}(\mathcal{N}(G) \cup \tau)),$ except that we argue $\mathcal{N}(G) \cap \tau$ is connected as a result of the property that $N_G(v)\cap N_G(w)\neq \emptyset$.

For the second step, we will show that $\N(G) \cup \tau$ is homotopy equivalent to $\N(G')$, and thus
\[
 \widetilde{H}_1(\N(G')) \cong \widetilde{H}_1(\N(G)\cup \tau)\, ,
\]
from which our result will follow.
Because we have assumed that $\left(\bigcup_{a\in A}N_G(a)\right)\cap \left(\bigcup_{b\in B}N_G(b)\right)=\emptyset$, in $\N(G)\cup \tau$ the edge $(v,w)$ is contained in only $\bigcup_{u\in U}N_G(u)$, while also
\[
v \in \left(\bigcup_{a\in A}N_G(a)\right) \cup\left( \bigcup_{u\in U}N_G(u)\right) \text{ and } w \in \left(\bigcup_{b\in B}N_G(b)\right) \cup \left(\bigcup_{u\in U}N_G(u)\right)\, .
\]
The subcomplex $\bigcup_{u\in U}N_G(u)\subseteq \N(G)\cup \tau$ therefore has the structure of the join of $(v,w)$ with the subcomplex of $\bigcup_{u\in U}N_G(u)$ induced by $X$; within this subcomplex, we make a discrete Morse matching by pairing any cell $\sigma$ containing $v$ but not $w$ with $\sigma\cup\{w\}$.
By Theorem~\ref{thm:maindiscretemorse}, the resulting space, which we denote $\mathcal{X}$, is homotopy equivalent to $\N(G)\cup\tau$.
The topological deformation resulting from this matching has the effect of contracting the joined subcomplex along the edge $(v,w)$, and thus $\mathcal{X}$ is a simplicial complex where $\bigcup_{b\in B}N_G(b)$ has been added to the link of $v$.
This is precisely the description of the link of $vw$ in $\N(G')$, which completes the proof.
\end{proof}

%%%%%%%%%%%%%%%%%%%%%%%%%%%%%%%%%%%

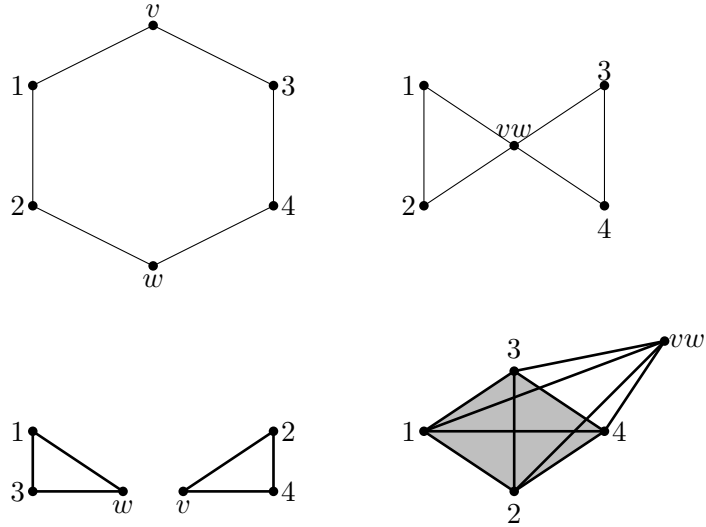
\begin{figure}[ht]
\begin{center}
\begin{tikzpicture}[scale = 0.2, every node/.style={}] 

\begin{scope}[yshift={7cm}]
%bowtie graph
\fill[] (8,4) circle (9pt); 
\fill[] (2,0) circle (9pt);
\fill[] (2,8) circle (9pt);
\fill[] (14,0) circle (9pt);
\fill[] (14,8) circle (9pt);

%hexagon graph
\fill[] (-8,0) circle (9pt);
\fill[] (-8,8) circle (9pt);
\fill[] (-16,-4) circle (9pt);
\fill[] (-24,0) circle (9pt);
\fill[] (-24,8) circle (9pt);
\fill[] (-16,12) circle (9pt);

%hexagon
\node (b) at (-7,0) {$4$};
\node (d) at (-7,8) {$3$}; 
\node (e) at (-16,13) {$v$}; 
\node (e) at (-16,-5) {$w$}; 
\node (g) at (-25,0) {$2$};
\node (h) at (-25,8) {$1$};

%bowtie graph
\draw[fill = gray] (2,0) -- (2,8);
\draw[fill = gray] (2,8) -- (8,4);
\draw[fill = gray] (2,0) -- (8,4);
\draw[fill = gray] (8,4) -- (14,0);
\draw[fill = gray] (8,4) -- (14,8);
\draw[fill = gray] (14,8) -- (14,0);

%hexagon graph
\draw[fill = gray] (-8,0) -- (-8,8);
\draw[fill = gray] (-8,8) -- (-16,12);
\draw[fill = gray] (-16,12) -- (-24,8);
\draw[fill = gray] (-24,8) -- (-24,0);
\draw[fill = gray] (-24,0) -- (-16,-4);
\draw[fill = gray] (-16,-4) -- (-8,0);

%bowtie graph
\node (a) at (1,0) {$2$};
\node (c) at (1,8) {$1$}; 
\node (f) at (8,5) {$vw$}; 
\node (i) at (14,9) {$3$};
\node (j) at (14,-1.5) {$4$};
 
\end{scope}
 %hexagon complex 
 %\filldraw[draw=black,fill=lightgray] (-8,-8) --
(-8,-12) -- (-14,-12);
\draw[line width=1pt] (-8,-8) --
(-8,-12);
\draw[line width=1pt] (-8,-12) --
(-14,-12);
\draw[line width=1pt] (-8,-8) --
(-14,-12);
 %\filldraw[draw=black,fill=lightgray] (-24,-8) --
(-24,-12) -- (-18,-12);
\draw[line width=1pt] (-24,-8) --
(-24,-12);
\draw[line width=1pt] (-24,-12) --
(-18,-12);
\draw[line width=1pt] (-24,-8) --
(-18,-12);

\fill[] (-8,-8) circle (9pt); 
\fill[] (-8,-12) circle (9pt); 
\fill[] (-14,-12) circle (9pt); 
\fill[] (-24,-8) circle (9pt); 
\fill[] (-24,-12) circle (9pt); 
\fill[] (-18,-12) circle (9pt); 

\node (k) at (-7,-8) {$2$};
\node (l) at (-7,-12) {$4$}; 
\node (m) at (-14,-13) {$v$}; 
\node (n) at (-18,-13) {$w$};
\node (o) at (-25,-8) {$1$};
\node (p) at (-25,-12) {$3$};

%bowtie complex 
\filldraw[draw=black,fill=lightgray] (2,-8) --
(8,-12) -- (14,-8)--(8, -4);
\draw[line width=1pt] (2,-8) --
(8,-12);
\draw[line width=1pt] (14,-8) --
(8,-12);
\draw[line width=1pt] (2,-8) --
(14, -8);
\draw[line width=1pt] (8,-12) --
(8,-4);
\draw[line width=1pt] (14,-8) --
(8,-4);
\draw[line width=1pt] (2,-8) --
(8,-4);

\draw[line width=1pt] (2,-8) --
(18, -2);
\draw[line width=1pt] (8,-12) --
(18,-2);
\draw[line width=1pt] (14,-8) --
(18,-2);
\draw[line width=1pt] (18,-2) --
(8,-4);

\fill[] (18,-2) circle (9pt); 
\fill[] (2,-8) circle (9pt); 
\fill[] (8,-12) circle (9pt); 
\fill[] (14,-8) circle (9pt); 
\fill[] (8,-4) circle (9pt); 

\node (q) at (19.5,-2) {$vw$};
\node (r) at (1,-8) {$1$}; 
\node (s) at (8,-13.5) {$2$}; 
\node (t) at (8,-2.5) {$3$};
\node (u) at (15,-8) {$4$};

\end{tikzpicture}
\end{center}
\caption{Top left: Graph $G$ with $\dd(v,w)= 3$; Top right: Graph $G':=\textrm{vid}(G,(v,w))$; Bottom left: $\mathcal{N}(G)$; and Bottom right: $\mathcal{N}(G')$. Observe that $\textrm{rank}(\widetilde{H}_1(\mathcal{N}(G)))=2$ and $\textrm{rank}(\widetilde{H}_1(\mathcal{N}(G'))) = 3$. }
\label{fig:example}
\end{figure}

%%%%%%%%%%%%%%%%%%%%%%%%%%%%%%%%%%%%%%%%%%%%%%%%%%%%%%%

\begin{remark}
Without a path of length four, when $\dd(v,w) = 3$ we cannot conclude in general that $\textrm{rank}(\widetilde{H}_1(\mathcal{N}(G'))) \leq \textrm{rank}(\widetilde{H}_1(\mathcal{N}(G)))$, since our Mayer-Vietoris argument no longer holds.
As an example, consider the graph $G$ depicted in Figure~\ref{fig:example} and perform a vertex identification on $v$ and $w$.
An analogous consideration when $G$ is a cycle on $9$ vertices yields similar results without increasing $\chi(G)$.
\end{remark}

We next demonstrate that a single vertex identification can result in an arbitrarily large decrease in the size of the first homology of $\N(G)$.
Further, this decrease can arise from the elimination of an arbitrarily large number of $S^1$ wedge summands.

\begin{theorem}\label{thm:bettidrop}
  For every $n\geq 5$, there exists a graph $G_n$ and a single vertex identification in $G_n$ resulting in a graph $G_n'$ such that:
  \begin{itemize}
  \item $\N(G_n)$ is a wedge of $2n+5$ circles, and
    \item $\N(G_n')$ has trivial first homology.
  \end{itemize}
\end{theorem}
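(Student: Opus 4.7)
The plan is to exhibit an explicit family $G_n$ for each $n\geq 5$ and to verify the two claims by combining discrete Morse theory with the Mayer--Vietoris argument already used in the proof of Theorem~\ref{thm:d2rankdrop}. My strategy is to engineer $G_n$ so that the chosen pair $v,w$ lies at distance two and shares a large common neighborhood $U$, while $A:=N(v)\smallsetminus U$ and $B:=N(w)\smallsetminus U$ satisfy the three hypotheses of Theorem~\ref{thm:d2rankdrop}. Under those hypotheses, the identification introduces the face $\tau = N_G(v)\cup N_G(w)$, and a single such $\tau$ can fill in many independent $1$-cycles simultaneously; the construction is built precisely so that this one addition exhausts every generator of $H_1$.

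Concretely, I would take $G_n$ to be the union of a fixed small \emph{core} graph $H$ (carrying $5$ of the circles) with $n$ identical \emph{pendant gadgets}, each attached only through vertices in $N(v)\cup N(w)$, and each contributing two independent $S^1$ summands to $\N(G_n)$. The core $H$ is chosen so that $\N(H)$ itself is homotopy equivalent to a wedge of $5$ circles (a small carefully chosen neighborhood of an odd cycle with extra edges suffices, which is where the threshold $n\geq 5$ enters), and the gadgets are attached via edges that do not create new paths of length $3$ from $v$ to $w$ nor violate the disjointness condition $\bigl(\bigcup_{a\in A}N(a)\bigr)\cap \bigl(\bigcup_{b\in B}N(b)\bigr)=\emptyset$. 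Distance between $v$ and $w$ stays equal to two throughout, so Theorem~\ref{thm:d2rankdrop} applies.

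To verify that $\N(G_n)$ is a wedge of exactly $2n+5$ circles, I would build an acyclic matching on the face poset of $\N(G_n)$ via the Patchwork Theorem (Theorem~\ref{thm:patchwork}) applied to the natural projection onto a face poset labeling which gadget a simplex ``lives in.'' Matching within each gadget leaves $2$ critical $1$-cells (and no higher critical cells), matching within the core leaves $5$ critical $1$-cells, and matching the emptyset with a single critical $0$-cell accounts for the wedge point. Theorem~\ref{thm:maindiscretemorse} then identifies $\N(G_n)$ up to homotopy with a wedge of $2n+5$ circles. To verify that $\N(G_n')$ has trivial $H_1$, I would mimic the argument of Theorem~\ref{thm:d2rankdrop}: because the gadgets are attached only through $N(v)\cup N(w)$, the face $\tau$ contains the $1$-skeleton supporting all $2n+5$ critical cycles, so $\N(G_n)\cup\tau$ already has trivial first homology, and the subsequent deletion of $\cstar(v)\cup\cstar(w)$ in passing to $\N(G_n')$ only kills (at most) cells in the contractible cone, leaving $\widetilde{H}_1(\N(G_n'))=0$.

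The main obstacle is the bookkeeping needed to hit the count $2n+5$ on the nose and to simultaneously ensure that the gadgets do not introduce unintended higher-dimensional cycles, spurious $S^1$ summands that survive the identification, or paths of length three from $v$ to $w$ that would invalidate hypothesis (2) of Theorem~\ref{thm:d2rankdrop}. This is essentially a careful design-and-verify step: once the discrete Morse matching on a single gadget is known to produce exactly two critical $1$-cells both of whose representing cycles lie inside $\tau$, the Patchwork Theorem assembles the global picture, and the rest of the argument follows mechanically from Theorem~\ref{thm:d2rankdrop}.
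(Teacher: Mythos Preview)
Your proposal is not a proof but a plan, and the plan has a genuine gap: you never exhibit the graph. Everything hinges on producing a core $H$ and gadgets with the stated properties, and you yourself flag this as ``the main obstacle.'' Without the explicit construction you cannot verify that $\N(G_n)$ is a wedge of \emph{exactly} $2n+5$ circles (as opposed to at least that many, or a wedge with higher-dimensional spheres mixed in), nor can you verify that every generating $1$-cycle is supported on the vertex set of $\tau=N(v)\cup N(w)$. The latter claim is doing all the work in killing $H_1$, and it is far from automatic: a gadget contributing vertices outside $N(v)\cup N(w)$ will create facets in $\N(G_n)$ indexed by those vertices' neighborhoods, and you have not argued why the resulting cycles avoid those facets. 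Also note that Theorem~\ref{thm:d2rankdrop} itself only yields an inequality, so invoking it cannot by itself give $\widetilde{H}_1(\N(G_n'))=0$; you are right that you need to re-run its argument and show the stronger statement that $\N(G_n)\cup\tau$ already has trivial $H_1$, but that again requires the explicit graph.

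For comparison, the paper does not route through Theorem~\ref{thm:d2rankdrop} at all. It writes down a completely explicit $G_n$ (vertices $X,Y,Z$ together with $iA,iB,iC$ for $1\le i\le n$, with a specific edge list) and identifies $X$ with $Z$. In fact this $G_n$ \emph{fails} hypothesis~(2) of Theorem~\ref{thm:d2rankdrop}: there is a path $X$--$1B$--$1C$--$Z$ of length three, so your proposed framework would not even apply to the paper's example. Instead, the paper builds tailored acyclic matchings on the face posets of $\N(G_n)$ and $\N(G_n')$ separately, via explicit order-preserving maps to small auxiliary posets and the Patchwork Theorem, and counts the critical cells directly. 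This yields the sharper conclusion $\N(G_n')\simeq\bigvee_{2n-1}S^2$, not merely trivial $H_1$. Your Morse-theoretic intentions are in the right spirit, but the content of the argument is entirely in the construction and the explicit matchings, which you have not supplied.
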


\begin{proof}
The theorem follows from Propositions~\ref{prop:circles} and~\ref{prop:spheres}.
\end{proof}

We define our desired $G_n$ as follows.

\begin{definition}\label{def:bettidropgraph}
  Let $G_n$ be the graph with vertex and edge sets defined as follows:
  \[
    V(G) = \{X,Y,Z \}\cup\{iA, iB, iC :1 \leq i \leq n\}
  \]
  and
  \begin{align*}
    E(G) :=& \hspace{4.3mm} \{ (X,Y), (Y,Z), (1A, X), (nC, X)\}\\
          & \cup \{ (iB, X), (iA, iB), (iB,iC), (iC,iA): 1 \leq i \leq n\}\\
          & \cup \{(jC, Z),  (jC, (j+1)A) :  1 \leq j \leq n-1 \}\\
    & \cup \{(kA, Z): 2 \leq k \leq n\}
  \end{align*}
  Define $G_n' := \vid(G_n, [X,Z])$.
\end{definition}

$G_3$ and $G_3'$ are shown in Figure~\ref{fig:drops}.
It is straightforward to argue that $\chi(G_n) = 4$ for all $n$, and it is also possible to provide an explicit Haj\'os construction of $G_n$ starting with iterated Haj\'os merges using $n$ copies of $K_4$, followed by a sequence of vertex identifications.

%%%%%%%%%%%%%%%%%%%%%%%%%%%%%%%%%%%

\begin{figure}[h]

\begin{center}
\begin{tikzpicture}[scale = 0.2, every node/.style={}]

\fill[] (-23,0) circle (9pt); 
\fill[] (-19.5,0) circle (9pt);
\fill[] (-14,0) circle (9pt);
\fill[] (-12,0) circle (9pt);
\fill[] (-9,0) circle (9pt);
\fill[] (-7,0) circle (9pt);
\fill[] (-1,0) circle (9pt);

\fill[] (-15.5,2) circle (9pt);
\fill[] (-10.5,2) circle (9pt);
\fill[] (-5.5,2) circle (9pt);
\fill[] (-10.5,7) circle (9pt);
\fill[] (-10.5,-5) circle (9pt);

\fill[] (20,0) circle (9pt);
\fill[] (14,0) circle (9pt);
\fill[] (12,0) circle (9pt);
\fill[] (9,0) circle (9pt);
\fill[] (7,0) circle (9pt);
\fill[] (1,0) circle (9pt);

\fill[] (15.5,2) circle (9pt);
\fill[] (10.5,2) circle (9pt);
\fill[] (5.5,2) circle (9pt);
\fill[] (10.5,7) circle (9pt);
\fill[] (10.5,13) circle (9pt);

\draw[line width=1pt] (-23,0) --
(-10.5,7);
\draw[line width=1pt] (-23,0) --
(-10.5,-5);
\draw[line width=1pt] (-19.5,0) --
(-10.5,7);
\draw[line width=1pt] (-19.5,0) --
(-15.5,2);
\draw[line width=1pt] (-15.5,2) --
(-14,0);
\draw[line width=1pt] (-14,0) --
(-19.5,0);
\draw[line width=1pt] (-14,0) --
(-12,0);
\draw[line width=1pt] (-15.5,2) --
(-10.5,7);
\draw[line width=1pt] (-10.5,2) --
(-10.5,7);
\draw[line width=1pt] (-5.5,2) --
(-10.5,7);
\draw[line width=1pt] (-1,0) --
(-10.5,7);
\draw[line width=1pt] (-14,0) --
(-10.5,-5);
\draw[line width=1pt] (-12,0) --
(-10.5,-5);
\draw[line width=1pt] (-9,0) --
(-10.5,-5);
\draw[line width=1pt] (-7,0) --
(-10.5,-5);
\draw[line width=1pt] (-12,0) --
(-9,0);
\draw[line width=1pt] (-12,0) --
(-10.5,2);
\draw[line width=1pt] (-9,0) --
(-10.5,2);
\draw[line width=1pt] (-7,0) --
(-9,0);
\draw[line width=1pt] (-7,0) --
(-5.5,2);
\draw[line width=1pt] (-7,0) --
(-1,0);
\draw[line width=1pt] (-1,0) --
(-5.5,2);

%%%%%%%%%%POSITIVE SIDE%%%%%%%%
\draw[line width=1pt] (10.5,13) --
(10.5,7);
\draw[line width=1pt] (20,0) --
(10.5,7);
\draw[line width=1pt] (20,0) --
(15.5,2);
\draw[line width=1pt] (15.5,2) --
(14,0);
\draw[line width=1pt] (14,0) --
(20,0);
\draw[line width=1pt] (14,0) --
(12,0);
\draw[line width=1pt] (15.5,2) --
(10.5,7);
\draw[line width=1pt] (10.5,2) --
(10.5,7);
\draw[line width=1pt] (5.5,2) --
(10.5,7);
\draw[line width=1pt] (1,0) --
(10.5,7);
\draw[line width=1pt] (7,0) --
(10.5,7);
\draw[line width=1pt] (9,0) --
(10.5,7);
\draw[line width=1pt] (12,0) --
(10.5,7);
\draw[line width=1pt] (14,0) --
(10.5,7);

\draw[line width=1pt] (12,0) --
(9,0);
\draw[line width=1pt] (12,0) --
(10.5,2);
\draw[line width=1pt] (9,0) --
(10.5,2);
\draw[line width=1pt] (7,0) --
(9,0);
\draw[line width=1pt] (7,0) --
(5.5,2);
\draw[line width=1pt] (7,0) --
(1,0);
\draw[line width=1pt] (1,0) --
(5.5,2);

%%%%%%%%%%%%%NODE LABELS%%%%%%%%%%%%
\node (a) at (-24,0) {\tiny$Y$};
\node (b) at (-18.5,-0.7) {\tiny$1A$}; 
\node (c) at (-10.5,8) {\tiny$X$}; 
\node (d) at (-10.5,-6) {\tiny$Z$};
\node (e) at (-10.8,-0.7) {\tiny$2A$};
\node (f) at (-6,-0.7) {\tiny$3A$};

\node (g) at (-14,2) {\tiny$1B$};
\node (h) at (-9.2,2) {\tiny$2B$};
\node (i) at (-6.6,2) {\tiny$3B$};

\node (j) at (-1,-0.7) {\tiny$3C$};
\node (j) at (-15,-0.7) {\tiny$1C$};
\node (j) at (-8,0.7) {\tiny$2C$};

\node (b) at (20,-0.9) {\tiny$3C$}; 
\node (c) at (12,8) {\tiny$XZ$}; 
\node (d) at (12,13) {\tiny$Y$};
\node (e) at (12,-0.9) {\tiny$2C$};
\node (f) at (7,-0.9) {\tiny$1C$};

\node (g) at (14,-0.9) {\tiny$3A$};
\node (h) at (9,-0.9) {\tiny$2A$};
\node (i) at (5,0.8) {\tiny$1B$};

\node (j) at (1,-0.9) {\tiny$1A$};
\node (j) at (10.5,0.8) {\tiny$2B$};
\node (j) at (16,0.8) {\tiny$3B$};

\end{tikzpicture}
\end{center}
\caption{$G_3$ is displayed on the left and $G_3'=\textrm{vid}(G_3, [X,Z])$ on the right. $G_n$ along with $G_n'$ defines a family of graphs for which the decrease in the first homology group after one vertex identification can be arbitrarily large. }
\label{fig:drops}
\end{figure}
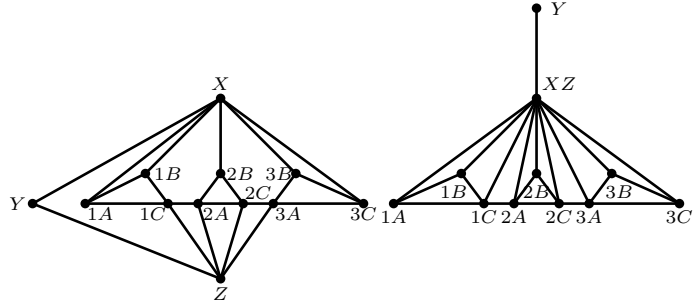

%%%%%%%%%%%%%%%%%%%%%%%%%%%%%%%%%%%

\begin{prop}\label{prop:circles}
For $n \geq 5$, $\N(G_n) \simeq  \bigvee \limits_{2n+5}S^1$. 
\end{prop}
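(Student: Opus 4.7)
The plan is to build $\N(G_n)$ inductively from a contractible core by attaching facets one at a time, using the following principle: if $T$ is a contractible simplicial facet and $X$ a subcomplex with $T \cap X$ a forest of $k$ connected components lying in $\ell$ distinct components of $X$, then since $T \cap X \hookrightarrow T$ is a cofibration and $T$ is contractible, $X \cup T \simeq X/(T \cap X)$. Collapsing each tree component to a point (a homotopy equivalence) and then identifying the $k$ resulting points reduces the number of components by $\ell - 1$ and introduces $k - \ell$ wedge summands of $S^1$. I would first enumerate the facets of $\N(G_n)$ by computing $N_{G_n}(v)$ for each $v$, obtaining the two simplices $\N(X)$ and $\N(Z)$, the edge $N(Y) = \{X,Z\}$, the \emph{$X$-triangles} $N(iB) = \{iA,iC,X\}$ for $1 \le i \le n$ together with $N(1A) = \{X, 1B, 1C\}$ and $N(nC) = \{nA, nB, X\}$, and the \emph{$Z$-tetrahedra} $N(iA) = \{iB, iC, Z, (i-1)C\}$ for $2 \le i \le n$ and $N(iC) = \{iA, iB, Z, (i+1)A\}$ for $1 \le i \le n-1$. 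Since $\N(X) \cap \N(Z) = \{Y\}$, the subcomplex $\mathcal{S}_0 := \N(X) \cup \N(Z) \cup N(Y)$ is a disjoint union of two contractible pieces.

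I would then attach the $X$-triangles in the order $N(2B), N(3B), \dots, N((n-1)B), N(1B), N(nB), N(1A), N(nC)$, verifying at each step that $T \cap X$ is a forest and counting its components: $N(2B)$ merges the two components and contributes no new circles; each $N(iB)$ for $3 \leq i \leq n-1$ contributes one $S^1$; the triangles $N(1B), N(nB)$ each contribute two $S^1$'s because $T \cap X$ has no edges (just three isolated vertices); and $N(1A), N(nC)$ each contribute one $S^1$. This produces a subcomplex $\mathcal{S}_1 \simeq \bigvee_{n+3} S^1$. Next I would attach the $Z$-tetrahedra in the order $N(1C), N(2C), \dots, N((n-1)C), N(2A), N(3A), \dots, N(nA)$, with analogous analysis yielding contributions of $2$ circles from $N(1C)$, $1$ each from $N(iC)$ for $2 \leq i \leq n-1$, $1$ from $N(2A)$, $0$ from each $N(iA)$ for $3 \leq i \leq n-1$ (here $T \cap X$ becomes a connected path of length three), and $1$ from $N(nA)$, yielding $n+2$ additional circles.

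Summing contributions gives $b_1(\N(G_n)) = 2n+5$. Moreover, since each attaching intersection $T \cap X$ is a forest (hence has trivial $\widetilde{H}_k$ for $k \geq 1$), the relative homology $H_k(X \cup T, X) \cong \widetilde{H}_{k-1}(T \cap X)$ vanishes for $k \geq 2$, so by induction $H_k(\N(G_n)) = 0$ for $k \geq 2$; combined with connectivity from Proposition~\ref{prop:connected} (since $\chi(G_n) = 4$ so $G_n$ is connected and non-bipartite), the complex has the homotopy type of a wedge of $2n+5$ circles. The main obstacle is the systematic verification that each $T \cap X$ is a forest, which reduces to determining for every edge of $T$ whether its common neighbors in $G_n$ correspond to facets already present in $X$; the cases split cleanly by index position in $G_n$, but special care is needed for the boundary facets $N(1B), N(nB), N(1A), N(nC), N(1C), N(2A), N(nA)$ where the symmetric middle-index pattern breaks.
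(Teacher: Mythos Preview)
Your proof is correct and takes a genuinely different route from the paper's. The paper proves this via discrete Morse theory: it defines an auxiliary poset $Q_n$ and an order-preserving map $\Gamma_n:P_n\to Q_n$ from the face poset, places acyclic matchings on the fibers using the Patchwork Theorem, and is left with the contractible subcomplex $N_{G_n}(X)\cup N_{G_n}(Z)$ together with exactly $2n+5$ critical $1$-cells. Your approach is instead purely homotopy-theoretic: you attach the facets $N(v)$ one at a time to the core $\N(X)\cup\N(Z)\cup N(Y)$, using the principle that gluing a contractible simplex $T$ along a cofibration with $T\cap X$ a forest yields $X\cup T\simeq X/(T\cap X)$, and you directly count the wedge summands of $S^1$ produced at each step. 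Your method is more elementary (no Morse machinery) and gives the homotopy type step by step; the paper's method is more systematic bookkeeping but requires verifying that a fairly intricate poset map is order-preserving. Both approaches involve a comparable amount of case analysis at the boundary indices.

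One remark on presentation: your final paragraph deduces the homotopy type from the vanishing of higher homology together with connectivity, which is not a valid inference for general spaces. This is harmless here, since your inductive attachment argument already establishes the homotopy type directly (each step preserves the property of being a finite disjoint union of wedges of circles), so that paragraph is redundant rather than load-bearing. You might simply state that the induction itself yields $\N(G_n)\simeq\bigvee_{2n+5}S^1$ and drop the homological detour.
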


\begin{proof}
  Let $P_n$ denote the face poset of $\N(G_n)$.
  Every vertex $v \in G_n$ generates a subposet isomorphic to a boolean algebra $B_{d}$ containing the subsets of $N_{G_n}(v)$, where $d$ is the cardinality of $N_{G_n}(v)$.
  The neighborhood of $X$ generates a subposet isomorphic to $B_{n+3}$ and the neighborhood of $Z$ generates a subposet isomorphic to $B_{2n-1}$.
  The remaining vertices have degree $2$, $3$, or $4$ giving rise to subposets isomorphic to $B_2$, $B_3$, or $B_4$, respectively.
  To provide notation for this, let $\da(v):= \{p \in P_n \, : \, p \subseteq N_{G_n}(v)\}$ and $\da(v)_{\geq 1} := \{ p \in \da(v) \, : \, |p| \geq 1\}$, where NP stands for \emph{Neighborhood Poset}.
  The strategy for this proof is to make systematic acyclic matchings on parts of the small Boolean algebras, then to quotient the resulting space by the subcomplex of $\N(G_n)$ given by $N_{G_n}(X)\cup N_{G_n}(Z)$, which is contractible.
%In the construction of our acyclic matching on $P_n$ below, it is helpful to consider $P_n$ as a disjoint union of the following sets:
%\begin{itemize}
%\item $\da(X)\cup \da(Z)$
%\item $\da(Y)$
%\item $\da(nB)_{\geq 1}$
%  \item $\da(1B)_{\geq 1}$
%    \item $\da(nC)_{\geq 1}$
%    \item $\da(1C)_{\geq 1}\setminus \{\{1A,1B\}\}$
%    \item $\da(nA)_{\geq 1}\setminus \{\{nB,nC\}\}$
%      \item $\da(jB)_{\geq 1}\smallsetminus \{\{jA,jC\}\}$ for $2\leq j\leq n-1$
%      \item $\da(jA)_{\geq 1}\smallsetminus \{\{(j-1)C,jC\},\{(j-1)C,Z\}$ for $2\leq j\leq n-1$
%      \item $\da(jC)_{\geq 1}\smallsetminus \{\{jA,(j+1)A\},\{jA,Z\},\{jB,Z\}\}$ for $2\leq j\leq n-1$
%\end{itemize}

\begin{figure}
  \begin{tikzpicture}[scale = 0.2, every node/.style={}]

\draw[line width=1pt] (0,-4) --
(0,36);
\draw[line width=1pt] (0,4) --
(-4,8);
\draw[line width=1pt] (0,12) --
(12,24);

\draw[line width=1pt] (12,24) --
(18,30);
\draw[line width=1pt] (4,16) --
(0,24);
\draw[line width=1pt] (8,20) --
(0,32);

\fill[] (0,-4) circle (9pt); 
\fill[] (0,0) circle (9pt);
\fill[] (0,4) circle (9pt);
\fill[] (0,8) circle (9pt);
\fill[] (0,12) circle (9pt);
\fill[] (0,16) circle (9pt);
\fill[] (0,20) circle (9pt);

\fill[] (0,24) circle (9pt); 
\fill[] (0,28) circle (9pt);
\fill[] (0,32) circle (9pt);
\fill[] (0,36) circle (9pt);
\fill[] (4,16) circle (9pt);
\fill[] (8,20) circle (9pt);
\fill[] (12,24) circle (9pt);
\fill[] (18,30) circle (9pt);
\fill[] (-4,8) circle (9pt);

\node (a) at (-1.5,-4) {$\mathcal{O}$};
\node (b) at (-1.5,0) {\tiny$\mbf{1A}$}; 
\node (c) at (1.5,4) {\tiny$\mbf{2A}$}; 
\node (d) at (-5.5,8) {\tiny$\mbf{X}$};
\node (e) at (-1.5,8) {\tiny$\mbf{Z_2}$};
\node (f) at (-1.5,12) {\tiny$\mbf{2C}$};

\node (g) at (-1.5,16) {\tiny$\mbf{Z_3}$};
\node (h) at (-1.5,20) {\tiny$\mbf{3A}$};
\node (i) at (-1.5,24) {\tiny$\mbf{Z_4}$};

\node (j) at (-1.5,28) {\tiny$\mbf{4A}$};
\node (j) at (-1.5,32) {\tiny$\mbf{Z_5}$};
\node (j) at (-1.5,36) {\tiny$\mbf{5A}$};

\node (b) at (5.5,16) {\tiny$\mbf{3C}$}; 
\node (c) at (9.5,20) {\tiny$\mbf{4C}$}; 
\node (d) at (12,26) {\tiny$\mbf{6C}$};
\node (e) at (18,32) {\tiny$\mbf{5C}$};

\end{tikzpicture}
\caption{$Q_6$}
\label{fig:Qnpic}
\end{figure}

  In order to create our systematic acyclic matchings, we will apply Theorem~\ref{thm:patchwork}.
Define $Q_n$ to be the poset on the elements
\[
  \{\mbf{\mathcal{O}, X, Z_2,..., Z_{(n-1)}, 1A, 2A, ..., (n-1)A, 2C, 3C,...., nC}\}
\]
with cover relations given by: 
\begin{itemize}
    \item $\mathcal{O} \prec \mbf{1A} \prec \mbf{2A} \prec \mbf{Z_2} \prec \mbf{2C} \prec \mbf{Z_3} \prec \mbf{3A}$    
    \item $\mathbf{2A} \prec \mbf{X}$
    \item $\mbf{2C} \prec \mbf{3C} \prec \cdots \prec \mbf{(n-2)C} \prec \mbf{nC}\prec \mbf{(n-1)C}$
    \item $\mbf{jA} \prec \mbf{Z_{j+1}} \prec \mbf{(j+1)A}$ for all $3 \leq j \leq n-2$
    \item $\mbf{jC} \prec \mbf{Z_{j+1}}$ for all $3 \leq j \leq (n-2)$ 
\end{itemize}
$Q_6$ is illustrated in Figure~\ref{fig:Qnpic}.
To distinguish target elements in $Q_n$ from the vertices of $G_n$, we write the elements of $Q_n$ in bold.

We next define a poset map $\Gamma_n: P_n \rightarrow Q_n$.
The preimage of an element $\alpha \in Q_n$ is denoted as $\Gamma_n^{-1}(\alpha)$.
We begin by setting $\Gamma_n^{-1}(\mathcal{O})=\da(X) \, \cup \da(Z)$.
No matching will take place on this subposet of $P_n$, and the corresponding subcomplex will remain contractible.
This accounts for both of the large Boolean algebras in $P_n$.
What remains unmapped are a handful of cells in each smaller Boolean algebra, which we must handle individually, leading to a long list of preimages.
For each element in $Q_n$, we define $\Gamma_n^{-1}$ in a manner that makes it straightforward (though tedious) to verify that the resulting map is order preserving.
For the elements in $Q_n$ that are less than or equal to $\mbf{2C}$, we make the following assignments.
Observe that when we remove elements from our neighborhood posets in the mapping below, it is due to the fact that those elements were already mapped in a previous preimage assignment.

\begin{itemize}
\item $\Gamma_n^{-1}(\mbf{1A})= \{\{1A, 1B,2A\}, \{1A, 2A\}, \{1B, 2A\} \}$
    \item $\Gamma_n^{-1}(\mbf{2A})=$ \begin{align*}&( \da(Y) \cup \da(1C) \cup \da(2C) \cup \da(2B)) \\
     & \smallsetminus \{\{1A,1B,2A\}, \{1A,2A\}, \{1B,2A\}, \{1A,1B\}, \{2A,3A\}, 1A, 1B, 2A, 2B, 2C, 3A\}\end{align*}
    \item $\Gamma_n^{-1}(\mbf{X}) = (\da(1B) \cup \da(1A) \cup \da(nB) \cup \da(nC))_{\geq 1}$

    \item $\Gamma_n^{-1}(\mbf{Z_2})= \{\{2B, 2C, Z\}, \{2B,2C\}, \{2C,Z\} \} $ 

    \item $\Gamma_n^{-1}(\mbf{2C})= $ \begin{align*}&(\da(2A) \cup \da(3A))_{\geq 1} \\
      &\smallsetminus \{\{2B,2C,Z\}, \{2B,2C\}, \{2B,Z\}, \{1C,2C\}, \{2C,3C\}, \{2C,Z\} \} \end{align*}
  \end{itemize}

  At this point, all of the elements of $P_n$ contained in $\da(X)$, $\da(Z)$, $\da(Y)$, $\da(1C)$, $\da(1A)$, $\da(2A)$, $\da(3A)$, $\da(nC)$, $\da(1B)$, $\da(nB)$, $\da(2B)$, and $\da(2C)$ have been assigned an image under $\Gamma_n$.
  Next, we map the remaining elements of $P_n$ to elements of $Q_n$ that are above $\mbf{2C}$.
  
  \begin{itemize}
    \item $\Gamma_n^{-1}(\mbf{jC})= \da((j+1)A)_{\geq 1} \smallsetminus \{\{jC,Z\}, \{jC, (j+1)C\}\}$ for each $3 \leq j \leq (n-2)$
    \item $\Gamma_n^{-1}(\mbf{Z_k})=\{\{kA, kB, Z\}, \{kA, kB\}\}$ for each $3 \leq k \leq (n-1)$.
    \item $\Gamma_n^{-1}(\mbf{kA})=$\begin{align*}&(\da(kC) \cup \da(kB))_{\geq1} \\
      &\smallsetminus \{\{kA,kB,Z\}, \{kA,kB\}, \{kB,Z\}, \{kA,Z\}, \{kA, (k+1)A\}\}\end{align*} for each $3 \leq k \leq (n-1)$.
    \item $\Gamma_n^{-1}(\mbf{nC})= \{\{(n-1)C,nB,nC\},\{(n-1)C,nC\},\{(n-1)C,nB\}\}$
    \item $\Gamma_n^{-1}(\mbf{(n-1)C})=$ \begin{align*}&\da(nA)_{\geq 1} \\ &\smallsetminus \{\{(n-1)C,nB,nC\}, \{(n-1)C,nB\}, \{(n-1)C,nC\}, \{(n-1)C,Z\},\{nB,nC\}\} \end{align*}
\end{itemize}

%The map $\Gamma_n$ is defined moving from left to right through the graph as depicted in Figure~\ref{fig:drops}, with an ephasis on the neighborhoods $iC, jA, 1B,$ and $nB$ for $1 \leq i \leq n$ and $2 \leq j \leq n$. $\Gamma_n^{-1}(\mbf{\mathcal{O}})$ is removed from each set because $N(X) \cup N(Z)$ contains $G_n \smallsetminus \{X, Z\}$. Outside of $N(X) \cup N(Z)$, each neighborhood $N((j+1)A)$ intersects $N(jA)$ to the right for $3 \leq j \leq (n-2)$. Similarly, $N(kC)$ intersects $N((k-1)C)$ and $N(kA)$ to the right for $3 \leq k \leq (n-1)$. Finally, $Z_2 \subseteq 2A$ and $Z_k \subseteq kC$ for $3 \leq k \leq (n-1)$. Since each of the sets, $S$, in the domain of $\Gamma$ are defined by taking a downward closed poset and removing a preimage of an element that is less than $\Gamma(S)$, that is also lower order ideal, the map $\Gamma$ is order-preserving. 

To complete our proof, we define a matching on each $\Gamma_n^{-1}(\bf{t})$ for $\bf{t} \in Q_n \smallsetminus \mbf{\mathcal{O}}$.
For each $\sigma\in\Gamma_n^{-1}(\bf{t})$ that does not contain the vertex $t$ (the bold symbol is an element of $Q_n$, the unbolded symbol is a vertex of $G_n$), we pair $\sigma$ with $\sigma \cup \{t\}$ --- if the element is $\mbf{Z_i}$, we use the vertex $Z$ as the unbolded version of $\mbf{Z_i}$.
Because for each of these matchings there is a unique element that is used for the pairing assignment, it is a quick exercise to confirm that these are all acyclic matchings.
Thus, by Theorem~\ref{thm:patchwork}, we have defined an acyclic matching on $P_n$. 
In addition to $\Gamma_n^{-1}(\mbf{\mathcal{O}})$, the corresponding critical (i.e. unmatched) cells in each preimage are:
\begin{itemize}
    \item $\Gamma_n^{-1}(\mbf{1A}) : \{1A, 2A\}$ 
    \item $\Gamma_n^{-1}(\mbf{2A}) : \{2A, 2B\}$, $\{X,Z\}$ 
    \item $\Gamma_n^{-1}(\mbf{X}) : \{1A,X\}$, $\{1C,X\}$, $\{1B,X\}$, $\{nA, X\}$, $\{nB, X\}$, $\{nC,X\}$ 
    \item $\Gamma_n^{-1}(\mbf{Z_2}) : \{2C, Z\}$ 
    \item $\Gamma_n^{-1}(\mbf{2C}) : \{2C,3B\}$ 
    \item $\Gamma_n^{-1}(\mbf{jC}) : \{jC, (j+1)B\}$ for $3 \leq j \leq n-2$, which gives rise to $n-4$ critical 1-cells.  
    \item $\Gamma_n^{-1}(\mbf{kA}) : \{kA, X\}$ for $3\leq k \leq n-1$, which gives rise to $n-3$ critical 1-cells. 
    \item $\Gamma_n^{-1}(\mbf{Z_k})$ has no critical cells for any $3 \leq k \leq (n-1)$ 
    \item $\Gamma_n^{-1}(\mbf{nC}) : \{(n-1)C, nC\}$
    \item $\Gamma_n^{-1}(\mbf{(n-1)C})$ has no critical cells.
\end{itemize}
This gives a total of $2n + 5$ critical 1-cells.
Since $N_{G_n}(X) \cup N_{G_n}(Z)$ forms a contractible subcomplex of the critical complex for this matching, we can contract this subcomplex yielding $\N(G_n) \simeq  \bigvee \limits_{2n+5}S^1$.
\end{proof}

\begin{prop}\label{prop:spheres}
For $n \geq 5$, we have $\N(G_n') \simeq \bigvee\limits_{2n-1}S^2$. 
\end{prop}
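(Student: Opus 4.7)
The plan is to show $\N(G_n') \simeq \Sigma L$, where $L := \link_{\N(G_n')}(XZ)$, and then identify $L$ as the neighborhood complex of a small subgraph of $G_n$ whose homotopy type can be computed via discrete Morse theory.

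First, observe that every vertex of $G_n$ other than $X$ and $Z$ is adjacent to at least one of $X$ or $Z$, so in $G_n'$ every vertex $v \neq XZ$ is adjacent to $XZ$. Consequently, every facet of $\N(G_n')$ other than $T := N_{G_n'}(XZ)$ contains $XZ$, giving the decomposition $\N(G_n') = T \cup \cstar_{\N(G_n')}(XZ)$. A direct check gives $T \cap \cstar_{\N(G_n')}(XZ) = L$: a face $\sigma \subseteq N_{G_n'}(XZ)$ lies in the closed star of $XZ$ precisely when $\sigma \cup \{XZ\}$ is a face of $\N(G_n')$. Since $T$ is a simplex and $\cstar_{\N(G_n')}(XZ)$ is a cone over $L$, both are contractible, and the resulting homotopy pushout of simplicial subcomplexes yields $\N(G_n') \simeq \Sigma L$.

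Second, I would identify $L$ as $\N(H_n'')$, where $H_n''$ is the induced subgraph of $G_n$ on $\{iA, iB, iC : 1 \leq i \leq n\}$---the chain of $n$ triangles $iA\,iB\,iC$ joined by edges $iC\,(i+1)A$. The vertex $Y$ does not enter $L$ because its only $G_n'$-neighbor is $XZ$. For any $\sigma \subseteq V(H_n'')$, the condition $\sigma \cup \{XZ\} \in \N(G_n')$ reduces to $\sigma \subseteq N_{H_n''}(v)$ for some $v \in V(H_n'')$, which establishes the identification. The goal then becomes to show $\N(H_n'') \simeq \bigvee_{2n-1} S^1$.

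Third, I would construct an explicit acyclic matching on the face poset of $\N(H_n'')$. Since every vertex of $H_n''$ has degree at most $3$, the facets of $\N(H_n'')$ consist of $n+2$ edges together with $2(n-1)$ triangles $F_i := \{iA, iB, (i+1)A\} = N_{H_n''}(iC)$ (for $1 \leq i \leq n-1$) and $G_j := \{(j-1)C, jB, jC\} = N_{H_n''}(jA)$ (for $2 \leq j \leq n$). Pair each $F_i$ with $\{iB, (i+1)A\}$ and each $G_j$ with $\{(j-1)C, jB\}$; acyclicity at the top level is immediate because each such edge belongs to a unique triangle. The $5n-2$ edges that remain consist of all intra-triangle edges together with the two inter-triangle chains $\{iA,(i+1)A\}$ and $\{iC,(i+1)C\}$, and these form a connected subcomplex on $3n$ vertices (the ``$A$-spine'' $1A - 2A - \cdots - nA$ together with the edges from each $iA$ to $iB$ and $iC$ furnishes a spanning tree of size $3n-1$). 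A standard spanning-tree matching then pairs each non-root vertex with its parent edge, yielding exactly $1$ critical vertex, $2n-1$ critical edges (namely $\{iB, iC\}$ for each $i$ and $\{iC, (i+1)C\}$ for $1 \leq i \leq n-1$), and no critical higher cells. Acyclicity of the combined matching follows from the Patchwork Theorem (Theorem~\ref{thm:patchwork}), with the main subtlety being that the matched edges at the triangle level share no common triangular superface. Applying Theorem~\ref{thm:maindiscretemorse} concludes $\N(H_n'') \simeq \bigvee_{2n-1} S^1$, and combining with the first step gives $\N(G_n') \simeq \Sigma \bigvee_{2n-1} S^1 \simeq \bigvee_{2n-1} S^2$.
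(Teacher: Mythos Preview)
Your argument is correct and takes a genuinely different route from the paper's. The paper proves Proposition~\ref{prop:spheres} by constructing an acyclic matching directly on the face poset of $\N(G_n')$: it leaves the entire Boolean lattice $\da(XZ)$ unmatched, builds a fairly intricate order-preserving map $\Gamma_n'$ into an auxiliary chain $Q_n'$, and matches on each fiber. The critical cells turn out to be $2n-1$ triangles, all containing $XZ$, and the contractible simplex $N_{G_n'}(XZ)$; contracting the latter gives the wedge of $2$-spheres.

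Your approach exploits the key structural feature---that $XZ$ is adjacent to every other vertex---much more directly. Splitting $\N(G_n')$ as the union of the full simplex $T=N_{G_n'}(XZ)$ and the closed star of $XZ$ immediately yields $\N(G_n')\simeq\Sigma L$, which reduces the problem to a one-dimensional computation. Identifying $L$ with $\N(H_n'')$ for the triangle-chain $H_n''$ is a nice observation, and the discrete Morse matching you give on that complex is considerably simpler than the paper's: the triangle/free-edge pairs and the spanning-tree matching are transparent, and the Patchwork decomposition into a ``high'' part (triangles and their matched edges) and a ``low'' part (vertices and remaining edges) is easily seen to be order-preserving. Your route is more conceptual and avoids the heavy bookkeeping of the paper's fiberwise matchings; the paper's route, on the other hand, works uniformly within the framework already used for Proposition~\ref{prop:circles} and does not invoke the suspension/pushout machinery.

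One minor notational point: the paper defines $\cstar_X(v)=\{\sigma\in X:v\in\sigma\}$, i.e.\ the \emph{open} star, whereas your argument uses the closed star (the cone $\{XZ\}*L$). This is harmless---your intended meaning is clear and the decomposition $\N(G_n')=T\cup(\{XZ\}*L)$ with intersection $L$ is exactly right---but you may wish to say ``closed star'' explicitly.
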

\begin{proof}

Let $P_n'$ denote the face poset of $\N(G_n')$. 
Define $Q_n'$ to be the poset on the elements 
\[
\{\mbf{\mathcal{O}'},\mbf{\mathcal{T}}\}\cup\{\mbf{jA, jC}:2 \leq j \leq (n-1)\}
\]
with cover relations given by $ \mbf{\mathcal{O}'} \prec \mbf{2A}$, $\mbf{iA} \prec \mbf{iC} \prec \mbf{(i+1)A}$ for $2 \leq i \leq (n-2)$, and maximal element $\mbf{\mathcal{T}}$.
Thus, $Q_n'$ is totally ordered.
As in the previous proof, it is helpful to think of $P_n'$ as a union of subsets of neighborhoods of the vertices of $G_n'$, adjusted to remove the subsets contained in the neighborhood of $XZ$.
Similarly to our previous proof, our strategy will be to leave the subsets of the neighborhood of $XZ$ unmatched, pair off subsets of the remaining vertices in $\N(G_n')$, and then quotient out the neighborhood of $XZ$ at the end.

We define a poset map $\Gamma_n': P_n' \rightarrow Q_n'$.
As before, let $\da(v):= \{p \in P_n \, : \, p \subseteq N_{G_n}(v)\}$ and $\da(v)_{\geq 1} := \{ p \in \da(v) \, : \, |p| \geq 1\}$.
Begin by setting $(\Gamma_n')^{-1}(\mbf{\mathcal{O}'}) = \da(XZ)$; no matching will take place on this subposet of $P_n'$ and the corresponding subcomplex will remain contractible.
For each of the remaining elements in $Q_n$, we define $(\Gamma_n')^{-1}$ in a manner that makes it straightforward to verify that the resulting map is order-preserving.
We first define
\[
(\Gamma_n')^{-1}(\mbf{2A})=(\da(1C) \cup \da(2B) \cup \da(2C))_{\geq 1} \smallsetminus \da(XZ) \, .
\]
Next, to account for $\da(1A)$, $\da(1B)$, $\da(2A)$, $\da(3A)$, and $\da(3B)$, set
\begin{align*}
(\Gamma_n')^{-1}(\mbf{2C}) =& \{\{1C,2B,2C,XZ\}, \{2C,3B,3C,XZ\}, \{1A,1C,XZ\}, \{1B,1C,XZ\}, \\
                            & \{1C,2B,XZ\}, \{2C,3C,XZ\}, \{2B,2C,XZ\}, \{2C,3B,XZ\}, \{3B,3C,XZ\}, \{3A,3C,XZ\}, \\
 & \{1C,2C,XZ\}, \{1C, XZ\}, \{3B,XZ\}, \{3C,XZ\}\} \, .
\end{align*}

We next make two types of iterated assignments, to account for the neighborhoods of the remaining vertices of $G_n'$, and a final assignment for maximal triangles in $\N(G_n')$.
\begin{itemize}
\item $(\Gamma_n')^{-1}(\mbf{kA})= \{\{kA, kB, (k+1)A, XZ\}, \{kB, (k+1)A, XZ\}, \{kA, (k+1)A, XZ\},  \{(k+1)A, XZ\}, \{kA,kB,XZ\}\}$ for  $3 \leq k \leq n$, where we do not include the four sets in $(\Gamma_n')^{-1}(\mbf{nA})$ containing an element labeled $(n+1)A$, thus $|(\Gamma_n')^{-1}(\mbf{nA})|=1$.
\item $(\Gamma_n')^{-1}(\mbf{kC})= \{\{kC, (k+1)B, (k+1)C, XZ\}, \{(k+1)B, (k+1)C, XZ\}, \{kC, (k+1)C, XZ\}, \\
  \{kC, (k+1)B, XZ\}, \{(k+1)B, XZ\}, \{(k+1)C, XZ\}\}$ for $3 \leq k \leq n$.
      \item $(\Gamma_n')^{-1}(\mbf{\mathcal{T}})=\{\{kA,kC,XZ\}:4\leq k\leq n\}$
      \end{itemize}

All cells in $(\Gamma_n')^{-1}(\mbf{\mathcal{T}})$ are unmatched.
As in the previous proof, for each $\sigma\in\Gamma_n^{-1}(\bf{t})$ that does not contain the vertex $t$ (the bold symbol is an element of $Q_n$, the unbolded symbol is a vertex of $G_n$), we pair $\sigma$ with $\sigma \cup \{t\}$.
For each matching there is a unique element used for pairing and therefore all matchings are acyclic. Thus by Theorem~\ref{thm:patchwork}, we have defined a matching on $P_n'$. Other than the elements in $(\Gamma_n')^{-1}(\mbf{\mathcal{O}})$, our critical cells are: 
\begin{itemize}
    \item $(\Gamma_n')^{-1}(\mbf{2A})$ has no critical cells.
    \item $(\Gamma_n')^{-1}(\mbf{2C}) : \{1A,1C,XZ\}$, $\{1B,1C,XZ\}, \{2B,2C,XZ\}$, $\{3A,3C,XZ\}$
    \item $(\Gamma_n')^{-1}(\mbf{kA}): \{kA, kB, XZ\}$ for $3 \leq k \leq n$, which gives rise to $n-2$ critical 2-cells.
    \item $(\Gamma_n')^{-1}(\mbf{kC})$ has no critical cells for $3 \leq k \leq n$.
    \item $(\Gamma_n')^{-1}(\mbf{\mathcal{T}}) : \{kA,kC,XZ\}$ for $4\leq k\leq n$, which gives rise to $n-3$ critical 2-cells.
\end{itemize}
This gives a total of $2n - 5 + 4 = 2n-1$ critical 2-cells. 
Since $(\Gamma_n')^{-1}(\mbf{\mathcal{O}})$ corresponds to a contractible subcomplex of the critical complex for this matching, we can contract this subcomplex and obtain $\N(G_n') \simeq \bigvee\limits_{2n-1}S^2$. 

\end{proof}

%%%%%%%%%%%%%%%%%%%%%%%%%%%%%%%%%%%%%%%%%%%%%%%%%%%%%%%%%%%%%
\section{Topological Effects of DHGO Compositions}\label{sec:dhgo}

Recall that a graph $G$ is \emph{$k$-extremal} if $G$ is a graph on $n$ vertices that is $k$-chromatic and $k$-critical, i.e. any subgraph of $G$ has chromatic number lower than $k$, and if $G$ has the minimum number of edges possible among such graphs on $n$ vertices.
Kostochka and Yancey observed~\cite{KostochkaYancey} that Ore thought beginning with an extremal graph on at most $2k$ vertices and repeatedly using $K_k$ as $G_2$ in DHGO compositions, defined below, would result in the best possible construction of sparse critical graphs.

\begin{definition}
For a graph $G$ and $u \in V(G)$, a \emph{split} of $u$, denoted $\text{sp}(G,u)$, is a new graph $G''$ with vertex set $V(G'') = V(G) \smallsetminus \{u\} + \{u', u''\}$, where $G \smallsetminus \{u\} \cong G'' \smallsetminus \{u, u''\}$, $N(u') \cup N(u'') = N(u),$ and $N(u') \cap N(u'') = \emptyset$.
\end{definition}
 
\begin{definition}
Let $G_1$ and $G_2$ be graphs such that $x_1 \in V(G_1)$ and $(x_2,y_2) \in E(G_2)$. 
A \emph{DHGO-compostion}, denoted $D(G_1, G_2)$, is a graph that is created by deleting $(x_2,y_2)$, splitting $x_1$ into $x_1'$ and $x_1''$ with positive degree, and identifying $x_1'$ with $x_2$ and $x_1''$ with $y_2$
\end{definition}

It is a straightforward exercise to show that a Haj\'os merge is a special case of a DHGO composition, which motivates us to consider the topological effect on $\N(G)$ of these more general operations.

\begin{lemma} \label{lem:split}
Let $G$ be a connected non-bipartite graph such that $G'':= \text{sp}(G,u)$ is connected bipartite where $N(u') \cup N(u'') = N(u)$ and $N(u') \cap N(u'') = \emptyset$. Then, $\mathcal{N}(G'') = A \uplus B$, where $N_{G''}(u'), u'' \in A$ and $N_{G''}(u''), u' \in B$. 
\end{lemma}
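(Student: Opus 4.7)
The plan is to mirror the proof of Lemma~\ref{lem:bipartite}. Since $G''$ is connected bipartite, write its bipartition as $V(G'')=V_1\uplus V_2$. By Proposition~\ref{prop:connected}, $\N(G'')$ is disconnected; more precisely, since every face $N_{G''}(v)$ with $v\in V_i$ is contained in $V_{3-i}$, the complex $\N(G'')$ decomposes as $\N(G'')=A\uplus B$, where the vertex sets of $A$ and $B$ are exactly the two bipartition classes of $G''$. It therefore suffices to show that $u'$ and $u''$ lie in \emph{opposite} bipartition classes of $G''$, because then $N_{G''}(u')$, as a face contained in the class opposite to that of $u'$, lies in the same component as $u''$, and symmetrically for $N_{G''}(u'')$ and $u'$.

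The key step is the claim that $u'$ and $u''$ are on opposite sides of the bipartition. I would deduce this from the non-bipartiteness of $G$. Choose a shortest odd cycle $C$ in $G$. If $C$ avoided $u$, then $C$ would survive as an odd cycle in $G''$, contradicting bipartiteness; hence $C$ passes through $u$, and since $C$ is simple we may write $C=u,v_1,v_2,\ldots,v_{2k},u$ for some $k\geq 1$. Because $N(u')\cap N(u'')=\emptyset$ and $N(u')\cup N(u'')=N(u)$, each of $v_1$ and $v_{2k}$ lies in exactly one of $N(u')$ or $N(u'')$. If both lay in $N(u')$, then $u',v_1,\ldots,v_{2k},u'$ would be an odd cycle in $G''$, contradicting bipartiteness; similarly both in $N(u'')$ is impossible. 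Thus, after relabeling, we may assume $v_1\in N(u')$ and $v_{2k}\in N(u'')$, which produces a walk $u',v_1,\ldots,v_{2k},u''$ of odd length $2k+1$ in the bipartite graph $G''$, forcing $u'$ and $u''$ into opposite bipartition classes.

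Finally, label $A$ as the component of $\N(G'')$ whose vertex set is the bipartition class containing $u''$ and $B$ as the one whose vertex set contains $u'$. Then $u''\in A$ by definition, and $N_{G''}(u')\subseteq V(A)$ since $N_{G''}(u')$ is contained in the bipartition class opposite to that of $u'$; symmetrically $u'\in B$ and $N_{G''}(u'')\in B$, which is precisely the conclusion. The only mildly nontrivial ingredient is the odd-cycle argument isolating $u'$ and $u''$ on opposite sides of the bipartition; everything else is routine use of Proposition~\ref{prop:connected} together with the defining properties of a split.
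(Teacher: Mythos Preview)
Your proof is correct and follows essentially the same approach as the paper's: both argue that every odd cycle in $G$ must pass through $u$, then use such a cycle to produce an odd walk from $u'$ to $u''$ in $G''$, forcing them into opposite bipartition classes. Your version is in fact more explicit than the paper's in justifying why the two neighbors of $u$ on the odd cycle cannot both land in $N(u')$ (or both in $N(u'')$), a step the paper's proof elides.
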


\begin{proof}
Since $\chi(G'') = 2$, $u$ must be part of all odd cycles in $G$. This implies that there is an odd path from $u'$ to $u''$, so  $u'', N(u') \in A$ and  $N_{G''}(u''), u' \in B$.  
If $u'$ and $u''$ are in the same connected component, there is an even path between $u'$ and $u''$, implying there is only one connected component, which is a contradiction to being connected bipartite. 
\end{proof}
 
\begin{theorem}\label{thm:dhgo}
Consider two connected graphs $G_1$ and $G_2$ with $x_1 \in V(G_1)$ and $(x_2,y_2) \in E(G_2)$ such that: 
\begin{enumerate}
\item $\chi(G_1), \chi(G_2) \geq 3$ and $(x_2,y_2)$ is not bridge, and
\item $\chi(G_1) \geq 3$, $\chi(G_2) \geq 4$, and $(x_2, y_2)$ is not a bridge.
\end{enumerate}
Then, $\mathcal{N}(D(G_1, G_2))$ is a wedge of at least one copy of $S^1$ with another space. 
\end{theorem}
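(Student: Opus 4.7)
I would reduce the statement to the framework already developed for Theorem~\ref{thm:hajostop} by decomposing a DHGO composition into operations applied to a disjoint union of neighborhood complexes. Setting $G_1'' := \text{sp}(G_1, x_1)$ and $G_2' := G_2 \smallsetminus (x_2, y_2)$, we have
\[
D(G_1, G_2) = \vid(G_1'' \uplus G_2', [(x_1', x_2), (x_1'', y_2)]),
\]
and since $(x_2, y_2)$ is not a bridge, $G_2'$ is connected. The initial complex is $\N(G_1'') \uplus \N(G_2')$, and each of the two cross-component vertex identifications has the same topological effect analyzed in the proof of Theorem~\ref{thm:hajostop}: identifying $u \in V(G_1'')$ with $u' \in V(G_2')$ is, up to homotopy, equivalent to attaching a $1$-cell between them, while the join $N(u) \ast N(u')$ that is newly introduced is contractible (its two factors are disjoint, living in different graphs) and contributes another $1$-cell between the contractions of $N(u)$ and $N(u')$. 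This produces up to four $1$-cells attached to $\N(G_1'') \uplus \N(G_2')$, and the remaining task is to locate a cycle using at least one of them.

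The next step is a case analysis on the number of path-components of $\N(G_1'')$ and $\N(G_2')$, with Lemma~\ref{lem:split} playing the role on the $G_1$-side that Lemma~\ref{lem:bipartite} plays on the $G_2$-side. In case (2), $\chi(G_2) \geq 4$ implies $\chi(G_2') \geq 3$, so $\N(G_2')$ is path-connected by Proposition~\ref{prop:connected}; then any two endpoints of the attached $1$-cells lying in $\N(G_2')$ are joined by a path in $\N(G_2')$, and since $\N(G_1'')$ has at most two components (by Lemma~\ref{lem:split} in the bipartite case, and by Proposition~\ref{prop:connected} otherwise), combining two of the attached $1$-cells with such a path produces an essential loop, yielding at least one $S^1$ wedge summand.

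The main obstacle is case (1), in which both $G_1''$ and $G_2'$ can be bipartite, so that $\N(G_1'') = A_1 \uplus B_1$ and $\N(G_2') = A_2 \uplus B_2$ each split into two path-components. Lemma~\ref{lem:bipartite} places $\{x_2, y_2\}$ in one of $A_2, B_2$ and $\{N_{G_2'}(x_2), N_{G_2'}(y_2)\}$ in the other, while Lemma~\ref{lem:split} locates $x_1'$ together with $N_{G_1''}(x_1'')$ in one component of $\N(G_1'')$ and $x_1''$ together with $N_{G_1''}(x_1')$ in the other. Tracking where the four attached $1$-cells land among these four components and checking the possible orientations given by the two lemmas, a direct (if somewhat tedious) verification shows that the resulting pattern of $1$-cells necessarily contains a cycle visiting at least two distinct components, which produces an $S^1$ summand. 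Once such a cycle is identified, the concluding paragraph of the proof of Theorem~\ref{thm:hajostop} adapts almost verbatim to give that $\N(D(G_1, G_2))$ is homotopy equivalent to a wedge of at least one copy of $S^1$ with another space.
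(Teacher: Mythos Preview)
Your approach is essentially the same as the paper's: you set $G_1'':=\mathrm{sp}(G_1,x_1)$ and $G_2':=G_2\smallsetminus(x_2,y_2)$, reduce the two cross-component identifications to the attachment of four $1$-cells exactly as in Theorem~\ref{thm:hajostop}, and then carry out the component analysis using Lemma~\ref{lem:split} on the $G_1$-side and Lemma~\ref{lem:bipartite} on the $G_2$-side. The only cosmetic difference is that you organize the case split by the theorem's hypotheses (1) versus (2), whereas the paper splits on whether $\chi(G_2')\geq 3$; these lead to the same verification.
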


\begin{proof}
The result follows analogously to Theorem~\ref{thm:hajostop} with an adaptation arising in the steps of the operation.
Let $G_1':=\textrm{sp}(G_1,x_1)$ and $G_2':=G_2 \smallsetminus (x_2,y_2)$ with the DGHO-composition identifiying $x_1', x_2$ and $x_1'', y_2$. It follows as in Theorem \ref{thm:hajostop}, the effect of identifying $x_1'$ with $x_2$ is homotopy equivalent to attaching a 1-simplex, $(x_1', x_2)$, and joining $N_{G_1'}(x_1') \ast N_{G_2'}(x_2)$. Similarly, the identification of $x_1''$ with $y_2$ gives rise to the attachment of $(x_1'',y_2)$ and joining $N_{G_1'}(x_1'') \ast N_{G_2'}(y_2)$. 

If $\mathcal{N}(G_2')$ is 0-connected (i.e. $\chi(G_2') \geq 3$) it suffices to show one of the connected components of $\mathcal{N}(G_1')$ contains at least two elements from $\{\{x_1'\}, \{x_1''\}, N_{G_1'}(x_1'), N_{G_1'}(x_1'')\}$ which follows from Lemma~\ref{lem:split} when $\chi(G_1') \geq 2$. So it suffices to consider $\chi(G_1') = \chi(G_2') = 2$. 

Suppose $\mathcal{N}(G_1'):= A \uplus B$ and $\mathcal{N}(G_2'):=C \uplus D$ have two connected components, respectively. From Lemma~\ref{lem:split}, we can assume $N_{G_1'}(x_1'), x_1'' \in A$ and $N_{G_1'}(x_1''), x_1' \in B$. From Lemma~\ref{lem:bipartite}, we can assume $x_2, y_2 \in C$ and $N_{G_2'}(x_2), N_{G_2'}(y_2) \in D$. Hence, up to homotopy equivalence the DGHO-composition attaches 1-cells between A and C, C and B, B and D, and D and A. The result follows. 
\end{proof}

%%%%%%%%%%%%%%%%%%%%%%%%%%%%%%%%%%%%%%%%%%%%%%%%%%%%%%%%%%%%

\section{Graph Construction Algorithms, Experimental Results, and Open Problems}\label{sec:experiments}

In this section, we report on results regarding computational experiments using SageMath~\cite{sagemath} via CoCalc.com~\cite{cocalc}.
We describe two stochastic algorithms that produce graphs using Haj\'os constructions and Urquhart constructions, respectively.
Using these algorithms, we generate sets of graphs and analyze the resulting distributions of their sizes, orders, and the ranks of the first homology groups of their neighborhood complexes.
We conclude the section with several open problems.

\subsection{Expected behavior of Haj\'os-type constructions and first Betti numbers}
Our motivation for analyzing the rank of the first homology group of $\N(G)$, called the first Betti number of $\N(G)$, is three-fold.
First, our main results in this work have shown that a graph $G$ obtained by a Haj\'os merge or a vertex identification with vertices at distance 5 or greater apart has at least one copy of $S^1$ as a wedge summand in $\N(G)$, up to homotopy.
This implies that the first Betti number of $\N(G)$ is strictly positive.
Second, if $\N(G)$ is highly connected, then a classical theorem in algebraic topology due to Hurewicz implies that the first Betti number is zero.
Thus, having a trivial first Betti number serves as an extremely weak approximation to being highly connected.
Third, Kahle has previously studied topological properties of neighborhood complexes of random graphs, establishing the following theorem.
Let the random graph $G(n,p)$ denote the probability space of all graphs on a vertex set of size $n$ with each edge inserted independently with probability $p$.
\begin{theorem}[Kahle~\cite{KahleRandomNeighborhood}]\label{thm:kahle}
  If $p=1/2$ and $\epsilon>0$ then almost always $\widetilde{H}_\ell(\N(G(n,p)))=0$ for $\ell\leq (1-\epsilon)\log_2(n)$.
\end{theorem}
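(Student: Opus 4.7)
The plan is to show that, with probability tending to $1$, $\N(G(n,1/2))$ contains the full $d$-skeleton of the simplex $\Delta^{V(G)}$ on the vertex set of $G$ for $d=\lceil(1-\epsilon)\log_2 n\rceil+2$, which forces $\widetilde H_\ell(\N(G))$ to vanish for every $\ell\leq d-2$. The bridge between topology and probability is the tautological observation that a subset $\sigma\subseteq V(G)$ is a face of $\N(G)$ precisely when the vertices of $\sigma$ share a common neighbor in $G$, so the event ``$\N(G)$ contains the full $d$-skeleton'' translates to the combinatorial event ``every $(d+1)$-subset of $V(G)$ has a common neighbor.''

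First I would carry out the topological reduction. Since $\N(G)\subseteq \Delta^{V(G)}$, containment of the full $(\ell+1)$-skeleton forces the equality $\N(G)^{(\ell+1)}=(\Delta^{V(G)})^{(\ell+1)}$; because $\widetilde H_\ell$ is computed from cellular chains in dimensions at most $\ell+1$, this yields $\widetilde H_\ell(\N(G))=\widetilde H_\ell((\Delta^{V(G)})^{(\ell+1)})=0$, using the standard fact that the full $k$-skeleton of a simplex is $(k-1)$-connected. It therefore suffices to verify, with probability tending to $1$, that every $k$-subset of $V(G)$ has a common neighbor in $G$ for $k$ up to $\lceil(1-\epsilon)\log_2 n\rceil+2$; monotonicity in $k$ means that only the largest value needs to be checked.

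Next I would execute the probabilistic step by a single union bound. For a fixed $k$-subset $S\subseteq V(G)$, each vertex outside $S$ is a common neighbor of $S$ with probability exactly $2^{-k}$, independently across vertices, so
\[
\Pr[S \text{ has no common neighbor}] = (1-2^{-k})^{n-k} \leq \exp\bigl(-(n-k)\,2^{-k}\bigr).
\]
Summing over the $\binom{n}{k}\leq n^k$ such subsets bounds the failure probability by $\exp\bigl(k\ln n-(n-k)\,2^{-k}\bigr)$. Substituting $k=(1-\epsilon)\log_2 n+O(1)$ gives $2^{-k}=\Theta(n^{-(1-\epsilon)})$, so the negative contribution has order $n^\epsilon$ while the positive contribution has order $(\log n)^2$; the exponent tends to $-\infty$ and the bad event has probability $o(1)$.

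The main obstacle, in my view, is not the union bound but the bookkeeping needed to match the topological threshold with the probabilistic one so that the constant $\epsilon$ is preserved. One must absorb an additive shift of $2$ when passing from ``$(\ell+1)$-skeleton contained'' to ``$\widetilde H_\ell$ vanishes,'' and possibly a further $\epsilon$-replacement when converting between $\lceil\,\cdot\,\rceil$ and the continuous bound; these constant-order losses are comfortably swallowed by the gap between $(\log n)^2$ and $n^\epsilon$, but the statement must be formulated cleanly to avoid circular dependence on $\epsilon$. The appearance of the threshold $\log_2 n$ itself is forced by the expected count $n\cdot 2^{-k}$ of common neighbors of a random $k$-set, which crosses from super-polynomially large to sub-polynomially small exactly when $k\approx\log_2 n$.
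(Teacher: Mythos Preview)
The paper does not prove this theorem at all: it is quoted verbatim as a result of Kahle and used only to motivate the experimental discussion in Section~\ref{sec:experiments}. There is therefore no ``paper's own proof'' to compare against.

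That said, your argument is correct and is essentially the one Kahle gives. The reduction is exactly right: a set $\sigma\subseteq V(G)$ is a face of $\N(G)$ if and only if it has a common neighbor, so ``$\N(G)$ contains the full $d$-skeleton of $\Delta^{V(G)}$'' is the same event as ``every $(d+1)$-subset has a common neighbor''; and since simplicial homology $\widetilde H_\ell$ is determined by the $(\ell+1)$-skeleton, equality of $(\ell+1)$-skeleta with the full simplex forces $\widetilde H_\ell=0$. Your union bound
\[
\Pr[\text{some }k\text{-set has no common neighbor}]\;\le\;\binom{n}{k}(1-2^{-k})^{n-k}\;\le\;\exp\!\bigl(k\ln n-(n-k)2^{-k}\bigr)
\]
with $k=(1-\epsilon)\log_2 n+O(1)$ is the standard estimate, and the comparison of $(\log n)^2$ against $n^{\epsilon}$ is the heart of it. One minor sharpening: once the full $(\ell+1)$-skeleton is present you actually get vanishing for $\ell\le d-1$, not merely $\ell\le d-2$, since the $(\ell+1)$-skeleton of a simplex is $\ell$-connected; this only helps with the bookkeeping you flagged. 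The constant shifts are indeed harmless for the reason you state.
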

From this perspective, if we were to sample uniformly from $n$-vertex graphs for large $n$, we would expect to see mainly graphs with trivial first Betti numbers.
Thus, we expect that a generic $k$-constructible graph on many vertices will have Haj\'os constructions that end in a vertex identification using two vertices that are at distance less than or equal to four from each other.

\subsection{Two Haj\'os-type construction algorithms}

We define in Appendix~\ref{sec:appendix} two algorithms that we call the Constructible Random Algorithm (CRA) and the Urquhart Random Algorithm (URA).
The CRA is a stochastic algorithm implementing the recursive definition of $k$-constructible graphs given in Defintion~\ref{def:hajosconst}. 
We use a probability $p$ to determine the likelihood of selecting a Haj\'os merge or vertex identification at each step of the algorithm; when $p$ is small, vertex identifications are favored.
The URA is based on repeatedly generating graphs using a slightly-restricted version of the Urquhart construction in Definition~\ref{def:urquhartconst}, where the restriction ensures that the input graphs for the Urquhart construction are connected.
We use random choices for certain parameters and vertex/edge selections in the URA, leading to the stochasticity of the algorithm.

\subsection{Experimental Results}

Using an implementation in SageMath, we ran the Constructible Random Algorithm for $p\in \{0.02,0.1,0.5\}$ and $k\in \{3,4,5,6\}$, generating various numbers of graphs.
For each of the three values of $p$ when $k = 5$ and for $p=0.02$ when $k=6$, we include in Appendix~\ref{sec:appendix} a plot of the number of vertices versus number of edges for the graphs generated, and a histogram of the first Betti numbers of the resulting neighborhood complexes.
We also provide in Table~\ref{tab:cradata} the percent of graphs for which the first Betti number was trivial.
Similarly, we ran the Urquhart Random Algorithm for $k\in \{3,4,5,6\}$ and $m=n=12$; we provide in Appendix~\ref{sec:appendix} and Table~\ref{tab:uradata} the same data report as for our CRA generated data.

\begin{table}
\begin{tabular}{|r|r|r|r|r|}
\hline
$k$ & $p$ & \text{\# graphs generated} & \text{fraction of first Betti numbers equal to zero}  \\
\hline
\hline
$3$ & $0.02$ & 10,000 & 0.25 \\
\hline
$3$ & $0.10$ & 10,000 & 0.065 \\
\hline
$3$ & $0.50$ & 10,000 & 0.0018 \\
  \hline
  \hline
$4$ & $0.02$ & 10,000 & 0.61 \\
\hline
$4$ & $0.10$ & 10,000 & 0.32 \\
\hline
$4$ & $0.50$ & 10,000 & 0.01\\
  \hline
  \hline
$5$ & $0.02$ & 10,000 & 0.88 \\
\hline
$5$ & $0.10$ & 10,000 & 0.54\\
\hline
$5$ & $0.50$ & 2,939 & 0.03\\
  \hline
  \hline
$6$ & $0.02$ & 10,000 &0.88 \\
\hline
$6$ & $0.10$ & 10,000 & 0.44\\
\hline
$6$ & $0.50$ & 2,894 & 0.08 \\
\hline
\end{tabular}
\caption{CRA results.}
\label{tab:cradata}
\end{table}

\begin{table}
\begin{tabular}{|r|r|r|}
\hline
$k$ & \text{\# graphs generated} & \text{fraction of first Betti numbers equal to zero} \\
\hline
  \hline
  $3$ & $2000$ & 0.67\\
  \hline
  $4$ & $2000$ & 0.74\\
  \hline
  $5$ & $2000$ & 0.81 \\
  \hline
  $6$ & $2000$ & 0.83\\
  \hline
\end{tabular}
\caption{URA results, with $m=n=12$ for all cases.}
\label{tab:uradata}
\end{table}

It is interesting to note that the URA-generated graphs frequently have zero first Betti number as shown in Figures~\ref{fig:urd4_betti},~\ref{fig:urd5_betti}, and~\ref{fig:urd6_betti} and Table~\ref{tab:uradata}, and also have a reasonable distribution of number of vertices versus number of edges, as shown in Figures~\ref{fig:urd4_os},~\ref{fig:urd5_os}, and~\ref{fig:urd6_os}.
On the one hand, this matches our expectation from Kahle's results that many graphs will have a trivial first Betti number; on the other hand, it is somewhat surprising that we so frequently have an Urquhart construction with final step being a vertex identification of two vertices at short distance from each other.
We obtain these distributions with only $2000$ graphs sampled.
It is worth noting that by the nature of Urquhart constructions beginning from supergraphs of $K_k$, the URA produces graphs that would potentially require a large number of recursive steps in order to be produced by the standard Haj\'os constructions.

The CRA data is somewhat more complicated, in that the probability $p$ that is selected as input for the algorithm plays a significant role in the outcomes.
Further, in order to obtain interesting data, it is necessary to generate a larger number of graphs; while it is possible for all of the cases we consider to generate $10000$ graphs with CRA, it was not always reasonable using SageMath via CoCalc to compute the first Betti numbers for all these graphs, which is why a smaller sample size is included in Table~\ref{tab:cradata} for $k=5$ and $k=6$ with $p=0.50$.
When $p=0.50$, Haj\'os merges are equally likely as vertex identifications in CRA, so it is not surprising that Table~\ref{tab:cradata} and Figure~\ref{fig:crd5_50_betti} show that most of the graphs produced have a positive first Betti number.
Also, because a Haj\'os merge of $G_1$ and $G_2$ results in a graph with $|V(G_1)|+|V(G_2)|-1$ vertices and $|E(G_1)|+|E(G_2)|-1$ edges, the plot in Figure~\ref{fig:crd5_50_os} is not particularly surprising.
While Table~\ref{tab:cradata} and Figures~\ref{fig:crd5_10_os} and~\ref{fig:crd5_10_betti} show that the situation improves for $p=0.10$, it is when $p=0.02$ that we see behavior similar to the URA output.

In summary, based on these initial investigations, the URA appears to be quite effective at producing sample sets of graphs that have a broad distribution of number of vertices against number of edges and also have a large percentange of graphs with zero first Betti number.
The CRA is also capable of generating reasonable sample sets, but it is less clear how long it would take the CRA with $p\approx 0.02$ to produce a sample set of graphs with a large average number of vertices.
To accomplish this task, the URA appears to be better suited.

\subsection{Further questions}

The original motivation inspiring the definition of $\N(G)$ was to provide lower bounds for the chromatic numbers of Kneser graphs~\cite{LovaszChromaticNumberHomotopy}.
These topological approaches have also been used subsequently to find sharp lower bounds for chromatic numbers of other graphs, e.g. the stable Kneser graphs~\cite{SchrijverGraphs}.
In this work we have shown that Haj\'os-type constructions of these and other graphs with highly-connected neighborhood complexes are constrained in significant ways, leading to the following:
\begin{problem}
  Find Haj\'os, Ore, and/or Urquhart constructions for Kneser and stable Kneser graphs, or for other graphs with highly-connected neighborhood complexes.
\end{problem}

Both Urquhart's results and our experimental data has demonstrated that, while the operations of Haj\'os merge and vertex identification leading to Haj\'os constructibility are foundational in all this work, Urquhart constructions of graphs as given in Definition~\ref{def:urquhartconst} serve as both a powerful theoretical tool and a useful ingredient in algorithms for sampling $k$-constructible graphs.
This motivates their further study, and thus we define the following families of graphs.
\begin{definition}
  Let $k\geq 3$, $m\geq k$, and $n\geq 1$, and define $\urq(k,m,n)$ to be the set of graphs that can be obtained as $\urq(G_1,\ldots,G_j)$ where
  \begin{itemize}
  \item each $G_i$ is connected,
  \item each $G_i$ is a supergraph of $K_k$,
  \item $|V(G_i)|\leq m$ for each $i$, and
  \item $1\leq j\leq n$.
  \end{itemize}
\end{definition}

Thus, $\urq(k,m,n)$ is the set of graphs that are Urquhart-$k$-constructible using at most $n$ component graphs (all connected), each having no more than $m$ vertices.
Given a $k$-constructible graph $G$ with $k\geq 3$, Urquhart's proof of Theorem~\ref{thm:urquhart} implies that there exist $m,n$ such that $G\in \urq(k,m,n)$.
The following problems are inspired by both classical questions in $k$-constructibility and topological properties of neighborhood complexes.

\begin{problem} 
  For a fixed $k\geq 3$, find families of graphs $G$ for which it is possible to find bounds on the values of $m$ and $n$ such that $G\in \urq(k,m,n)$.
\end{problem}

\begin{problem}
  What is the distribution of the chromatic numbers of the graphs in $\urq(k,m,n)$?
\end{problem}

\begin{problem}
  For each fixed $i$, what is the distribution of the ranks of the $i$-th homology groups for $\N(G)$ over $G\in\urq(k,m,n)$?
\end{problem}

%%%%%%%%%%%%%%%%%%%%%%%%%%%%%%%%%%%%%%%%%%%%%%%%%%%%%%%%%%%% 

\bibliographystyle{plain}
\bibliography{Braun}

% this places the addresses after the bibliography but prior to the appendices
\addresseshere

\appendix
\newpage

\section{Algorithms and Experimental Data}\label{sec:appendix}

\IncMargin{1em}
\begin{algorithm}
  \SetKwInOut{Input}{Input}\SetKwInOut{Output}{Output}
  \Input{A complete graph on $k$ vertices, a probability $p \in (0,1)$, and a positive integer $t$.}
  \Output{A list \texttt{GraphList} of $t$ $k$-constructible graphs.}
  \BlankLine
  Set $i = 0$\;
  Initialize \texttt{GraphList} as a list containing only $K_k$\;
  \While{$i<t$}{
    Generate a random value $0 < r <1$\;
    \eIf{\emph{$r > p$ and $i>0$}}{
      Randomly select a non-complete graph $G$ from \texttt{GraphList}\;
      Randomly choose a pair of non-adjacent vertices $(v,w)$ in $G$\;
      Set $G':=\vid(G,[v,w])$\;
    }
    {Randomly select two graphs (possibly equal) $G_1$ and $G_2$ from \texttt{GraphList}\;
      Randomly select one edge from each of $G_1$ and $G_2$\;
      Set $G':=G_1\haj G_2$, performing the Haj\'os merge with the two selected edges\;
    }
    \eIf{\emph{$G'$ is not isomorphic to any element of \texttt{GraphList}}}{
      Append $G'$ to \texttt{GraphList}\;
      Increase $i$ by one\;
    }{Continue\;}
  }
  \caption{Constructible Random Algorithm (CRA)}\label{alg:cra}
\end{algorithm}
\DecMargin{1em}

\IncMargin{1em}
\begin{algorithm}
  \SetKwInOut{Input}{Input}\SetKwInOut{Output}{Output}
  \Input{Positive integers $k\geq 3$ and $t,m,n\geq 1$.}
  \Output{A list \texttt{GraphList} of $t$ Urquhart-$k$-constructible graphs. }
  \BlankLine
  Set $i = 0$\;
  Initialize \texttt{GraphList} as an empty list\;
  \While{$i \leq t$}{
    Generate a random integer $1 \leq  r_{c} \leq n$\;
    Initialize $L$ as an empty list\;
    \For{$1\leq s \leq r_c$}{
      Set $G_s=K_k$\; 
      Generate a random integer $1 \leq r \leq m$\;
      Add $r$ new vertices named $v_1,\ldots,v_{r}$ to $G_s$\;
      \For{$1\leq j \leq r$}{Select a random subset $\emptyset \neq S \subseteq V(K_k)$\;
        Add to $G_s$ the edges $\{(v_j,w):w\in S\}$\;
      }
      Select a random graph $H$ on vertex set $\{v_1,\ldots,v_r\}$\;
      Add the edges of $H$ to $G_s$\;
      Append $G_s$ to $L$\;
    }
    \While{\emph{$L$ contains more than one element}}{
      Randomly select two graphs $G_1, G_2 \in L$\;
      Delete $G_1$ and $G_2$ from $L$\;
      Randomly select one edge from each of $G_1$ and $G_2$\;
      Choose a random integer $1\leq \ell\leq \min\{|V(G_1)|, |V(G_2)|\} -1$\;
      Select $\ell$ pairs of disjoint vertices from $G_1$ and $G_2$ satisfying the criteria to be used in an Ore merge\;
      Append to $L$ the new graph $\ore(G_1, G_2)$ formed using these selections\; 
    }
    Define $G'$ to be the unique remaining element of $L$\;
    \eIf{ \emph{$G'$ is not isomorphic to any element of \texttt{GraphList}}}{
      Append $G'$ to \texttt{GraphList}\;
      Increase $i$ by one\;
    }{Continue\;}
  }
  \caption{Urquhart Random Algorithm (URA)}\label{alg:ura}
\end{algorithm}
\DecMargin{1em}

\newpage

\begin{figure}[h]
\centering
\begin{minipage}{.5\textwidth}
  \centering
  \includegraphics[width=\linewidth]{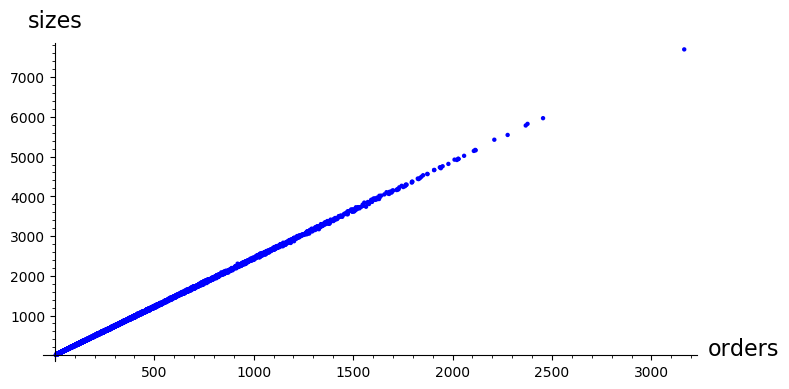}
  \caption{Orders versus sizes for $2939$ CRA-generated graphs, $k=5$, $p=0.50$}
  \label{fig:crd5_50_os}
\end{minipage}%
\begin{minipage}{.5\textwidth}
  \centering
  \includegraphics[width=\linewidth]{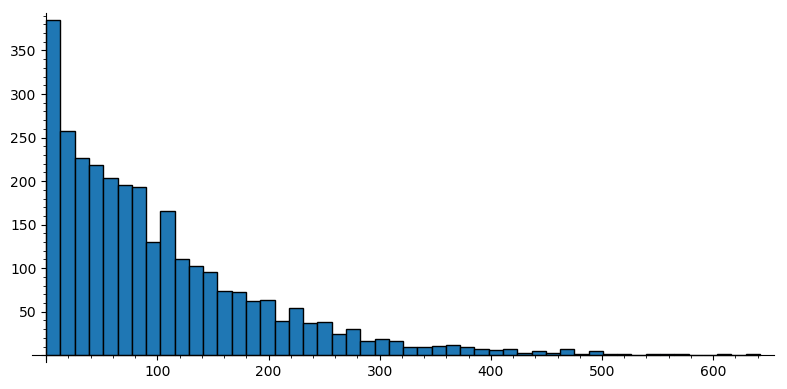}
  \caption{Histogram of first Betti numbers for $\N(G)$ for $2939$ CRA-generated graphs, $k=5$, $p=0.50$}
  \label{fig:crd5_50_betti}
\end{minipage}
\end{figure}

\begin{figure}[h]
\centering
\begin{minipage}{.5\textwidth}
  \centering
  \includegraphics[width=\linewidth]{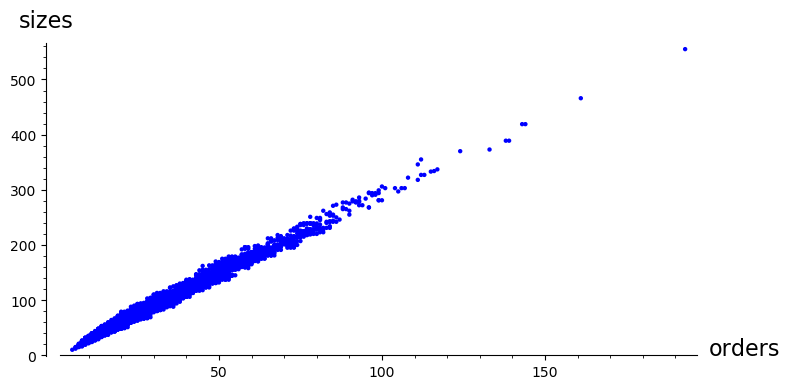}
  \caption{Orders versus sizes for $10000$ CRA-generated graphs, $k=5$, $p=0.10$}
  \label{fig:crd5_10_os}
\end{minipage}%
\begin{minipage}{.5\textwidth}
  \centering
  \includegraphics[width=\linewidth]{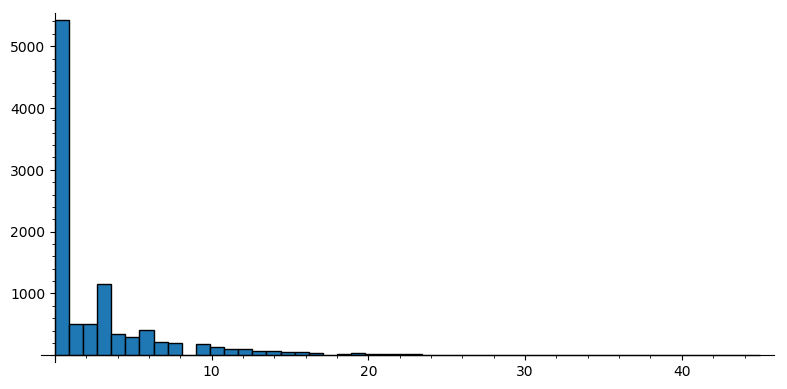}
  \caption{Histogram of first Betti numbers for $\N(G)$ for $10000$ CRA-generated graphs, $k=5$, $p=0.10$}
  \label{fig:crd5_10_betti}
\end{minipage}
\end{figure}

\begin{figure}[h]
\centering
\begin{minipage}{.5\textwidth}
  \centering
  \includegraphics[width=\linewidth]{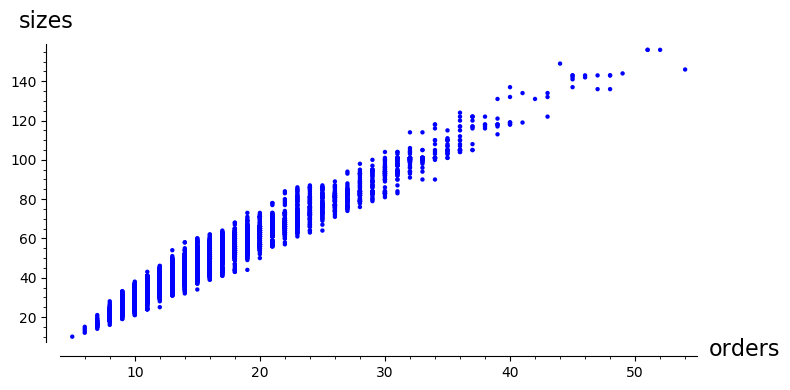}
  \caption{Orders versus sizes for $10000$ CRA-generated graphs, $k=5$, $p=0.02$}
  \label{fig:crd5_2_os}
\end{minipage}%
\begin{minipage}{.5\textwidth}
  \centering
  \includegraphics[width=\linewidth]{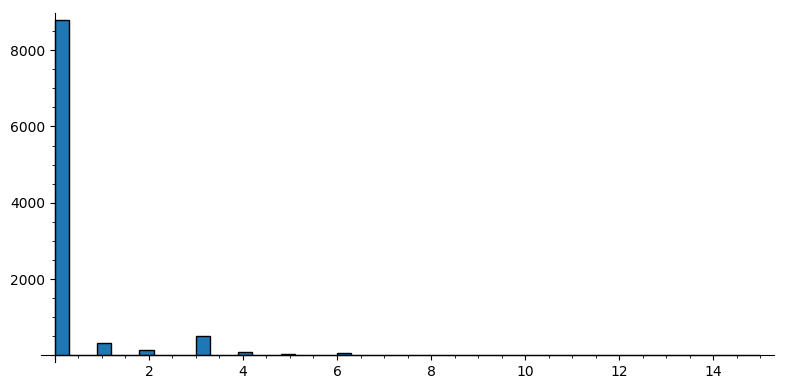}
  \caption{Histogram of first Betti numbers for $\N(G)$ for $10000$ CRA-generated graphs, $k=5$, $p=0.02$}
  \label{fig:crd5_2_betti}
\end{minipage}
\end{figure}

\begin{figure}[h]
\centering
\begin{minipage}{.5\textwidth}
  \centering
  \includegraphics[width=\linewidth]{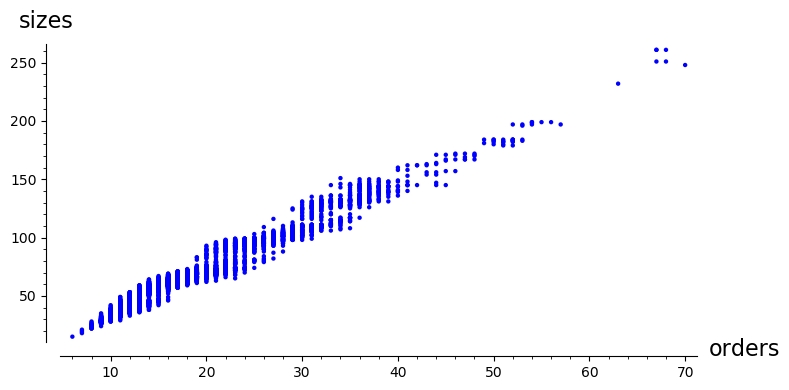}
  \caption{Orders versus sizes for $10000$ CRA-generated graphs, $k=6$, $p=0.02$}
  \label{fig:crd6_2_os}
\end{minipage}%
\begin{minipage}{.5\textwidth}
  \centering
  \includegraphics[width=\linewidth]{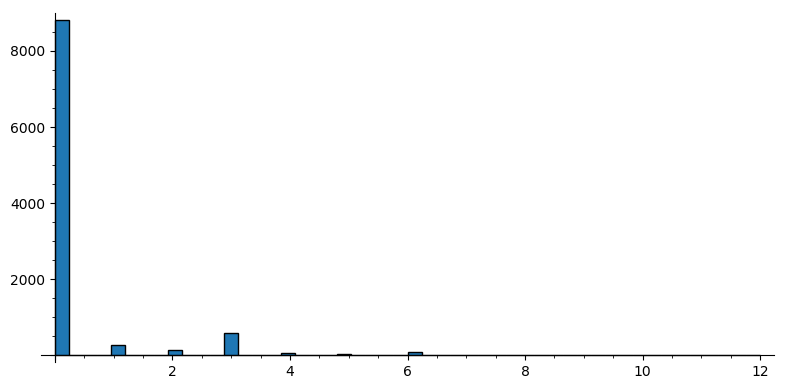}
  \caption{Histogram of first Betti numbers for $\N(G)$ for $10000$ CRA-generated graphs, $k=6$, $p=0.02$}
  \label{fig:crd6_2_betti}
\end{minipage}
\end{figure}

\begin{figure}[h]
\centering
\begin{minipage}{.5\textwidth}
  \centering
  \includegraphics[width=\linewidth]{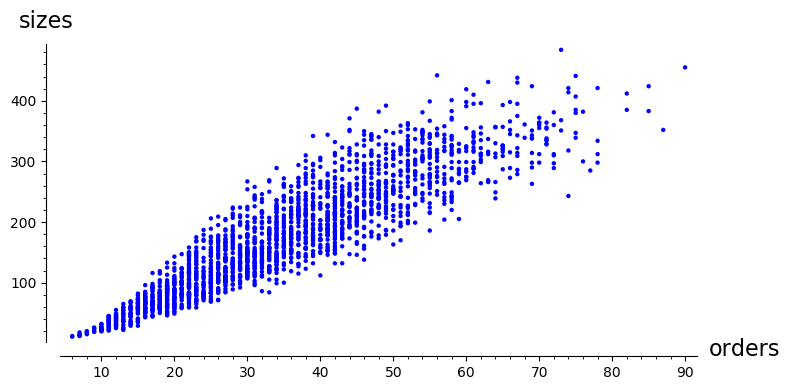}
  \caption{Orders versus sizes for $2000$ URA-generated graphs, $k=4$, $m=n=12$}
  \label{fig:urd4_os}
\end{minipage}%
\begin{minipage}{.5\textwidth}
  \centering
  \includegraphics[width=\linewidth]{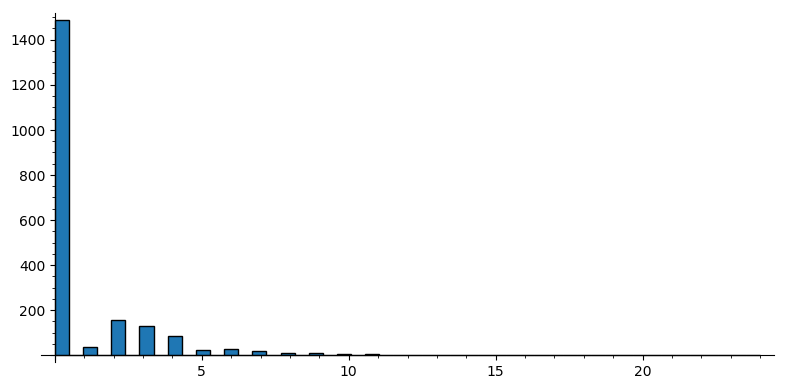}
  \caption{Histogram of first Betti numbers for $\N(G)$ for $2000$ URA-generated graphs, $k=4$, $m=n=12$}
  \label{fig:urd4_betti}
\end{minipage}
\end{figure}

\begin{figure}[h]
\centering
\begin{minipage}{.5\textwidth}
  \centering
  \includegraphics[width=\linewidth]{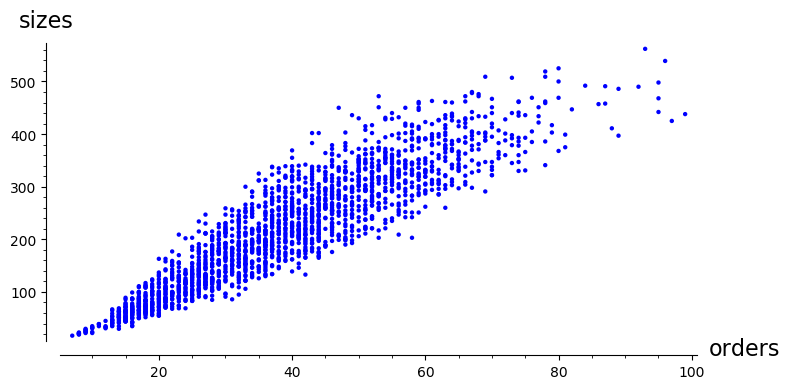}
  \caption{Orders versus sizes for $2000$ URA-generated graphs, $k=5$, $m=n=12$}
  \label{fig:urd5_os}
\end{minipage}%
\begin{minipage}{.5\textwidth}
  \centering
  \includegraphics[width=\linewidth]{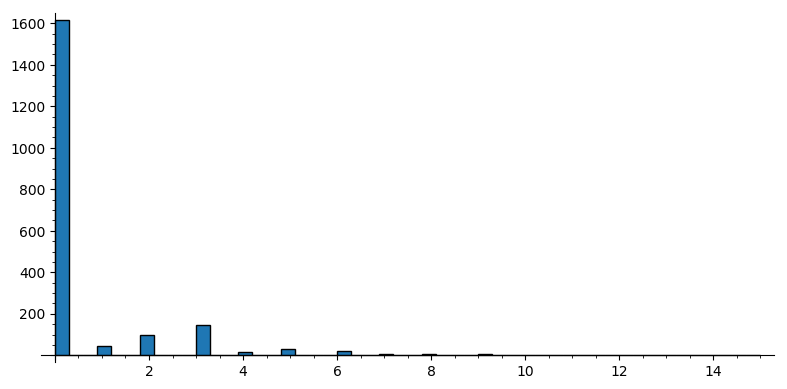}
  \caption{Histogram of first Betti numbers for $\N(G)$ for $2000$ URA-generated graphs, $k=5$, $m=n=12$}
  \label{fig:urd5_betti}
\end{minipage}
\end{figure}

\begin{figure}[h]
\centering
\begin{minipage}{.5\textwidth}
  \centering
  \includegraphics[width=\linewidth]{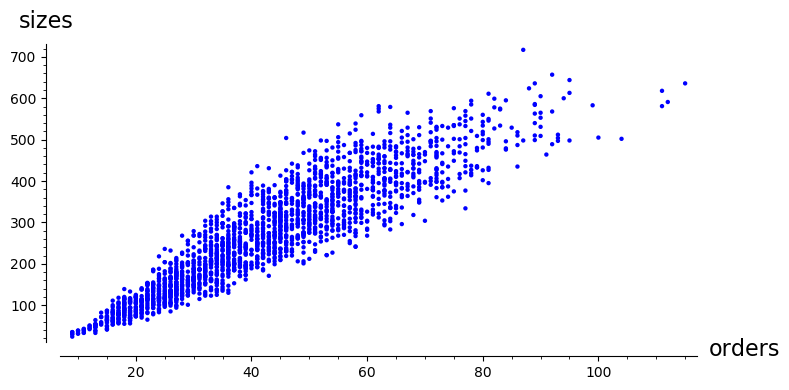}
  \caption{Orders versus sizes for $2000$ URA-generated graphs, $k=6$, $m=n=12$}
  \label{fig:urd6_os}
\end{minipage}%
\begin{minipage}{.5\textwidth}
  \centering
  \includegraphics[width=\linewidth]{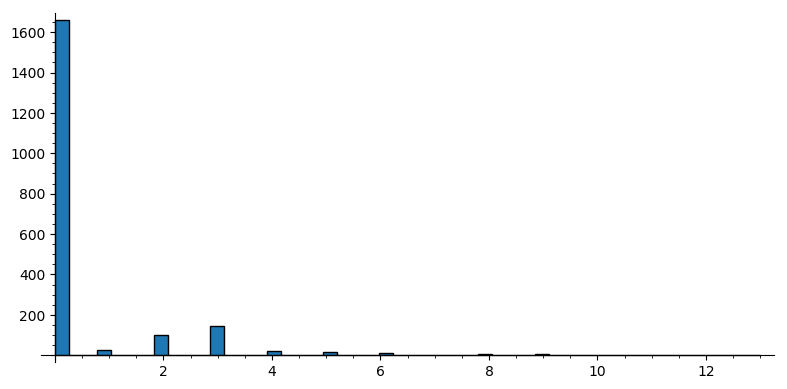}
  \caption{Histogram of first Betti numbers for $\N(G)$ for $2000$ URA-generated graphs, $k=6$, $m=n=12$}
  \label{fig:urd6_betti}
\end{minipage}
\end{figure}

\end{document}